\definecolor{myRed}{rgb}{0.9,0.,.2}
\definecolor{myBlue}{rgb}{0.,0.,.6}
\definecolor{myGreen}{rgb}{0.1,0.7,0.1}
\definecolor{myViolet}{rgb}{102,0,153}
\newtheorem{theoremA}{\indent Theorem}
\newtheorem{theoremB}{\indent Theorem}
\newtheorem{theoremC}{\indent Theorem}
\renewenvironment{proof}
	{\par\indent{\bf Proof.}}
	{\hfill$\scriptstyle\blacksquare$}
  \def\section{\@startsection{section}{2}%
    {\z@}{.5\linespacing\@plus.7\linespacing}{.5em}%
    {\normalfont\bfseries\centering}}
\def\@secnumfont{\bfseries}
\newcommand{\Z}{\mathbb{Z}}
\newcommand{\Q}{\mathbb{Q}}
\newcommand{\R}{\mathbb{R}}
\newcommand{\E}{\mathbb{E}}
\newcommand{\SSS}{\mathbb{S}}
\newcommand{\HH}{\mathbb{H}}
\newcommand{\F}{\mathbb{F}}
\newcommand{\ta}{\mathbf{t_{\overline{\alpha}}}}
\newcommand{\OO}{\mathrm{O}}
\newcommand{\OOO}{\mathcal{O} }
\newcommand{\id}{\mathrm{id} }
\DeclareMathOperator*{\Isom}{Isom}
\DeclareMathOperator*{\Sym}{Sym}
\newcounter{z}
\newtheorem{theorem}{\indent Theorem}[section]
\newtheorem{lemma}{\indent Lemma}[section]
\newtheorem{corollary}{\indent Corollary}
\newtheorem{proposition}{\indent Proposition}[section]
\newtheorem{definition}{\indent Definition}[section]
\theoremstyle{definition}
\newtheorem{remark}{\indent Remark}
\newcommand{\arcsinh}{\mathop{\rm arcsinh}\nolimits}
\begin{document}

\title{From geometry to arithmetic of compact hyperbolic Coxeter polytopes}

\author{Nikolay Bogachev}

\address{The Kharkevich Institute for Information Transmission Problems, Moscow, Russia}
\address{Moscow Institute of Physics and Technology, Dolgoprudny, Russia}
\address{Skolkovo Institute of Science and Technology, Skolkovo, Russia}
\address{Caucasus Mathematical Center, Adyghe State University, Maikop, Russia}

\email[]{nvbogach@mail.ru}

\begin{abstract}
We establish some geometric constraints on compact Coxeter polytopes in hyperbolic spaces and show that these constraints can be a very useful tool for the classification problem of reflective anisotropic Lorentzian lattices and cocompact arithmetic hyperbolic reflection groups.

\noindent
\textbf{Keywords:} Coxeter polytope, hyperbolic reflection group,
reflective Lorentzian lattice, arithmetic group.
\end{abstract}

\maketitle

\begin{center}
  \textit{Dedicated to the memory of Ernest Borisovich Vinberg (1937 -- 2020)}
\end{center}

\vskip 0.7cm


\section{Introduction}
\label{s1}

The main purpose of this paper is two-fold: to establish geometric constraints on compact Coxeter polytopes in hyperbolic spaces (Theorems~\ref{th1}~\&~\ref{th2}) and to show that these constraints can be a very useful tool for the classification problem of reflective anisotropic Lorentzian lattices and cocompact arithmetic hyperbolic reflection groups (Theorem~\ref{th3}).

The classification of reflective Lorentzian lattices is a very hard problem even for a fixed dimension and a ground field. This problem remains open in general since 1970--80s, and it is especially difficult for anisotropic lattices because of the lack of efficient methods, while for isotropic ones there is a general approach due to Scharlau. A significant progress in the classification of anisotropic reflective lattices was achieved only in the $2$-dimensional case and the only successful approach was geometric due to Nikulin.

Recall that Coxeter polytopes (i.e., polytopes whose bounding
hyperplanes $H_i$ and $H_j$ either do not intersect or form a dihedral angle of $\pi/n_{ij}$, 
where $n_{ij}\in\Z$, $n_{ij} \geqslant 2$) are fundamental domains for discrete groups generated by reflections in hyperplanes in spaces of constant curvature.
Finite volume Coxeter polytopes in $\E^n$ and
$\SSS^n$ were classified by Coxeter himself
in 1933 \cite{Cox34}.
In 1967, Vinberg \cite{Vin67} initiated his theory of hyperbolic reflection groups and, in particular, proved an arithmeticity
criterion for finite covolume hyperbolic reflection groups.
Due to the impressive results of Vinberg, Nikulin, Agol, Belolipetsky and others it is known (see \cite{Vin84,Nik07,ABSW08}) that there are only finitely many maximal  
arithmetic hyperbolic reflection groups in all  dimensions $n \geqslant 2$ and they can exist in $\mathbb{H}^n$ only for $n < 30$.
These results give the hope that reflective Lorentzian lattices and maximal arithmetic hyperbolic reflection 
groups can be classified.
For the detailed discussion and precise definitions of arithmetic hyperbolic reflection groups and reflective Lorentzian lattices see \textbf{\S}~\ref{sec:arith}.

Note that hyperbolic Coxeter polytopes belong to the class of \emph{acute-angled} (i.e., with dihedral angles at most $\pi/2$) polytopes in $\HH^n$. It is worth mentioning that in an acute-angled polytope, the distance from an interior point to a face (of any dimension) is equal to the distance to the plane of this face, as well as the distance between two \textit{facets} (i.e.,  faces of codimension~$1$) is equal to the distance between the corresponding supporting hyperplanes.

In order to formulate the main results of our paper, we introduce some notation. Let $P$ be a compact acute-angled polytope, $E$ an edge of $P$, $F_1, \ldots, F_{n-1}$ the
facets of $P$ containing $E$, and let $F_n$ and $F_{n+1}$ be the \emph{framing} \emph{facets} of $E$, i.e., the facets containing vertices of $E$ but not $E$ itself. 
The collection $\Sigma_E$ of facets $F_1, \ldots, F_{n+1}$ is called the \emph{ridge associated with} $E$ and
the number $\cosh \rho(F_n, F_{n+1})$ is its \emph{width}, where $\rho{(\,\cdot\,, \cdot\,)}$ is the hyperbolic metric and $\rho(F_n, F_{n+1})$ is the distance between the facets $F_n$ and $F_{n+1}$.

Every ridge corresponds to a set $\overline{\alpha} = \{\alpha_{ij}\}$, where $\alpha_{ij}$ is the dihedral angle between the facets $F_i$ and $F_j$. In \cite{Kol12}, the ridge $\Sigma_E$ associated to the edge $E$ was called the ridge of type $\overline{\alpha}$. We denote by $\Omega$ the set of all possible sets (or types) $\overline{\alpha}$.

A Lorentzian lattice $L$ is said to be
\emph{reflective}
if its automorphism group contains a finite index subgroup
generated by
reflections, and
\emph{sub-$2$-reflective}
if the same group contains a finite index subgroup generated by
\emph{sub-$2$-reflections} (see Definitions~\ref{def:sub-2} and \ref{def:refl-lat}).

In order to classify reflective Lorentzian lattices and
to prove his finiteness theorems for maximal arithmetic hyperbolic
reflection groups,
Nikulin proved
(see \cite[Lemma~3.2.1]{Nik80}; the proof
of \cite[Theorem~4.1.1]{Nik81b}; or his ICM 1986 talk \cite[Theorem A]{Nik86}) that every
finite volume
acute-angled polytope in~$\mathbb{H}^n$ has a facet $F$ such that
$\cosh \rho (F_1, F_2) \leqslant 7$
for any facets $F_1$ and $F_2$ of $P$ adjacent to $F$. 
This
implies that every compact (even finite volume)
acute-angled polytope
$P \subset \HH^n$ contains a ridge of width $\leqslant 7$ (for the case $n=3$ see \cite[Prop.~2.1]{Bog19}).
Note that we present this assertion in a form convenient  for us, although in most of Nikulin's papers (excepting \cite[Theorem A]{Nik86}) it was formulated in a different way. More precisely, in his papers, the squared lengths  of the facet
normals are equal to $(-2)$, therefore, his bound appears in the form $(\delta, \delta') \leqslant  14$. 

The next theorem is the first main result of this paper.

\begin{theoremA}\label{th1}
Every compact Coxeter polytope in the hyperbolic
$3$-space $\HH^3$ contains a ridge of width less than some number $\ta$ which depends  only on the set $\overline\alpha$ (and does not depend on the whole polytope) and can be explicitly computed (see Prop.~\ref{th:w}). 

Moreover,
$$
\mathbf{t}_{(\pi/k, \alpha_{13}, \alpha_{14}, \alpha_{23}, \alpha_{24})} < 5 \ \text{for} \ 2 \leqslant k \leqslant 4, \qquad \mathbf{t}_{(\pi/k, \alpha_{13}, \alpha_{14}, \alpha_{23}, \alpha_{24})} < 3 \ \text{for} \ k \geqslant 6,
$$
and finally $\alpha_{12} = \pi/5$ corresponds to the maximal upper bound:
$$
\max_{\overline \alpha \in \Omega} \{\ta\}=\mathbf{t}_{(\pi/5, \pi/3, \pi/3, \pi/2, \pi/2)} < 5.75.
$$
\end{theoremA}

\begin{remark}\label{rem-ta-Q}
This new bound $\ta$ is more efficient than
Nikulin's estimate. For certain types $\overline \alpha$ the corresponding number $\ta$ is significantly less than $7$, and even less than $5$ (see e.g. Table~\ref{tab:ridges}).

The author used such bounds $\ta$ (see
\cite{Bog19}) for classifying sub-$2$-reflective Lorentzian lattices over $\Z$ of signature $(3,1)$. Using Nukulin's result, one gets around $280$ candidate lattices to be combed through and checked for reflectivity, while using $\ta$ leaves us with at most $85$ candidates.

Moreover, in order to compute $\ta$ for specific types $\overline \alpha$ and to obtain a list of candidate lattices (over a fixed ground field) the author implemented the program {\bf \texttt{SmaRBA}} (\textbf{Sma}ll \textbf{R}idges, \textbf{B}ounds and \textbf{A}pplications, see \cite{SmaRBA}).
\end{remark}

\medskip
\begin{remark}
In a recent paper by the author, it has been shown that every compact arithmetic Coxeter polytope in $\HH^3$ with the ground field
$\Q$ contains a ridge of width $< 4.14$, see \cite[Theorem 1.1]{Bog19}, however, due to a minor technical error, the correct
bound there should be $4.98$. We shall discuss it in \textbf{\S} \ref{section:appl}. Observe that Theorem~\ref{th1} is much more general, since
arithmetic Coxeter polytopes in $\HH^3$ with ground field $\Q$ can have the dihedral angles
$\pi/2$, $\pi/3$, $\pi/4$, and $\pi/6$ only.
\end{remark}

\begin{remark}\label{rem:degrees}
The general upper bounds on $\ta$ in Theorem~\ref{th1} can be very useful for obtaining upper bounds on degrees of ground fields of arithmetic hyperbolic reflection groups. We conjecture that Theorem~\ref{th1} can be generalized to higher dimensions and that actually this upper bound will decrease with increasing dimension of the hyperbolic space, see Theorem~\ref{th2} for a particular case.

In his short and bright paper \cite{Nik07} Nikulin proved that the degree of the ground field of an arithmetic hyperbolic reflection group is bounded above by the maximum of the degrees of ground fields of arithmetic reflection groups in $\HH^n$, for $n = 2, 3$, and some number $N(2\mathbf{t})$ which depends on
his universal estimate $\mathbf{t} = 7$.
In 2011, Nikulin \cite{Nik11} proved that 
$N(14) \leqslant 25$. It was proved by Linowitz \cite{Lin18} in 2018 that $d \leqslant 7$ for $n=2$. In 2014, Belolipetsky and Linowitz showed \cite{BL2014} that  $d \leqslant 9$ in $\HH^3$.
Thus, the degree of the ground field of an arithmetic hyperbolic reflection group is bounded above by $25$ in dimensions $n \geqslant 4$. 

Nikulin's proof and construction explicitly use his estimate $\mathbf{t} < 7$ (more precisely, the doubled constant $2 \cdot 7 = 14$). This constant appeared in Nukilin's paper as the so-called ``minimality of edged polyhedra'', which is similar to the width of small ridges in our case.

Therefore, having a better bound for $\ta$ in $\HH^{\geqslant 4}$, one can improve (perhaps, significantly) Nikulin's bounds for degrees of ground fields. The author tried to use Nikulin's methods with $\ta = 5.75$ (assuming that our conjecture is already proved at least in its weakest sense) and obtained that the degree $d$ of ground field should be $\leqslant 23$ instead of $25$. If the angle $\pi/5$ is not allowed in arithmetic Coxeter polytopes (this condition is not very restrictive actually), then we use $\ta = 5$ and obtain $d \leqslant 21$. 
\end{remark}

The next easy consequence of Theorem~\ref{th1} can be also considered in the context of the above remark.

\begin{corollary}
\label{corol:th1}
Let $P \subset \HH^{n \geqslant 4}$ be a compact Coxeter
polytope and let $P'$ be a $3$-dimensional face of
$P$ that is a Coxeter polytope itself.
Then $P$ has
a ridge of width
$< 5.75$.
\end{corollary}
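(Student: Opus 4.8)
The plan is to deduce the corollary from Theorem~\ref{th1} by transferring a narrow ridge of the three-dimensional face $P'$ to a ridge of $P$. Let $\Pi$ denote the totally geodesic $3$-plane spanned by $P'$, so that $\Pi \cong \HH^3$ and $P' = P \cap \Pi$; since $\Pi$ is totally geodesic in $\HH^n$, its intrinsic metric is the restriction $\rho|_\Pi$ of the metric of $\HH^n$. As $P'$ is by hypothesis a compact Coxeter polytope in $\Pi$, Theorem~\ref{th1} applies to it directly and yields an edge $E$ of $P'$ whose associated ridge has width $\cosh\rho(g_1,g_2) < 5.75$, where $g_1, g_2$ are the framing facets of $E$ in $P'$ (the distance being computed in $\Pi$, equivalently in $\HH^n$). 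I would then show that the ridge of $P$ associated to the same edge $E$ has width no larger.

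For this I first recall that every compact Coxeter polytope in $\HH^n$ is a simple polytope: a face of codimension $k$ lies in exactly $k$ facets. Consequently the faces of $P'$ are precisely the faces of $P$ that lie in $P'$, and $\Pi$ is the intersection of the $n-3$ facet hyperplanes of $P$ whose common face is $P'$. In particular $E$ is an edge of $P$; let $v_1, v_2$ be its vertices, with $g_i$ the framing facet of $E$ in $P'$ through $v_i$. Each $v_i$ lies in exactly $n$ facets of $P$, of which $n-1$ contain $E$, so there is a unique remaining facet $G_i$ of $P$ through $v_i$ that avoids $E$, and this $G_i$ is the framing facet of $E$ in $P$ at $v_i$. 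Reading off the star of $v_i$ inside $\Pi$, the unique facet of $P'$ through $v_i$ missing $E$ is cut out by the unique facet of $P$ through $v_i$ missing $E$; hence $g_i = \Pi \cap G_i$.

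It then remains to compare the widths. From $g_i = \Pi \cap G_i \subseteq G_i$ and the monotonicity of distance under passing to subsets, one gets
\[
\rho(G_1,G_2)=\inf_{x\in G_1,\,y\in G_2}\rho(x,y)\ \le\ \inf_{x\in g_1,\,y\in g_2}\rho(x,y)=\rho(g_1,g_2),
\]
so the width $\cosh\rho(G_1,G_2)$ of the ridge of $P$ associated to $E$ is at most the width $\cosh\rho(g_1,g_2)<5.75$ of the ridge of $P'$. This exhibits the required ridge of $P$ and finishes the argument.

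The step I expect to be the main obstacle is the combinatorial matching of the middle paragraph: one must verify that the three-dimensional face embeds in $P$ so that each framing facet of $E$ in $P'$ is exactly the restriction to $\Pi$ of the corresponding framing facet of $E$ in $P$, and that these restrictions are genuine $2$-dimensional facets of $P'$ rather than smaller faces. This rests entirely on the simplicity of compact hyperbolic Coxeter polytopes; granting it, the metric comparison is elementary, using only that $\Pi$ is totally geodesic and that the distance between two sets can only increase when the sets shrink.
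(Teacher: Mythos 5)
Your proposal is correct and follows essentially the same route as the paper: apply Theorem~\ref{th1} to $P'$, use simplicity of the compact Coxeter polytope $P$ to identify the framing facets of $E$ in $P'$ with (restrictions of) the framing facets of $E$ in $P$, and conclude by monotonicity of the distance under inclusion $g_i \subseteq G_i$. Your middle paragraph merely spells out the facet-counting argument that the paper compresses into the sentence ``Recall that the compact hyperbolic Coxeter polytope $P$ is simple.''
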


\begin{remark}
It is proved in \cite{BK20} that each face of a quasi-arithmetic Coxeter polytope that is itself a Coxeter polytope is  also quasi-arithmetic; in addition,  
a sufficient condition for a face of codimension $1$  to be arithmetic is provided. 
A large number of Coxeter polytopes and their faces has been studied by using a computer program {\bf \texttt{PLoF}} \cite{plof}. 
It turns out that it is a common situation that a Coxeter polytope has many faces that are also Coxeter polytopes. 
This means that the  condition in Corollary~\ref{corol:th1} is rather natural. 
\end{remark}

\vskip 0.2cm

We shall say that a ridge $\Sigma_E$ of a compact hyperbolic acute-angled polytope $P$ is \emph{right-angled} if $\alpha_{ij} = \pi/2$ whenever $1 \leqslant i < j \leqslant n+1$, where $\alpha_{ij}$ is the dihedral angle between the facets $F_i$ and $F_j$. An edge $E$ of a polytope $P$ is said to be {\em outermost} relative to $O$ if 
$$\rho(O, E) = \max_{E'} \rho(O,E'),$$
where the maximum is taken over all edges $E'$ of $P$. It is possible that there are several outermost edges, in this case we fix one of them. The following result can be considered as a particular confirmation of a conjecture discussed in the last part of Remark~\ref{rem:degrees}.

\begin{theoremB}\label{th2}
Let $P$ be a compact Coxeter polytope in $\HH^{n \geqslant 3}$, let $O$ be an interior point of $P$, and let $E$ be the outermost edge from $O$. If the ridge associated with $E$ is right-angled, then it has width $\mathbf{t}(n) < \frac{n+1}{n-1}$. In particular, any compact right-angled Coxeter polytope has a ridge of width $\mathbf{t}(n) < \frac{n+1}{n-1}$. 
\end{theoremB}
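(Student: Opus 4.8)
The plan is to reduce the statement to an elementary estimate on the length of the outermost edge $E$, working in the hyperboloid model $\HH^n\subset\R^{n,1}$ with Lorentzian form $\langle\cdot,\cdot\rangle$. I would assign to each facet $F_i$ its outward unit normal $e_i$, so that $\langle e_i,e_i\rangle=1$ and $\langle e_i,e_j\rangle=-\cos\alpha_{ij}$ whenever $F_i,F_j$ meet. Right-angledness of $\Sigma_E$ makes $e_1,\dots,e_{n+1}$ pairwise orthogonal except for the framing pair $(e_n,e_{n+1})$, which does not meet. First I would note that $e_1,\dots,e_{n-1}$ span a positive definite subspace $W$, that the edge line $\gamma$ and both $e_n,e_{n+1}$ lie in the signature $(1,1)$ plane $W^{\perp}$, and that each of $e_n,e_{n+1}$ is, up to sign, tangent to $\gamma$ at the corresponding vertex. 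Hence $F_n$ and $F_{n+1}$ are the hyperplanes through the two vertices $v,w$ of $E$ that are perpendicular to $\gamma$, so $\rho(F_n,F_{n+1})$ equals the length $\ell=\rho(v,w)$ and the width is $\cosh\ell$. It then suffices to prove $\cosh\ell<\frac{n+1}{n-1}$.

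The key computation is a Pythagorean identity at a right-angled vertex. At $v$ the facets are $F_1,\dots,F_{n-1},F_n$, and their normals form an orthonormal basis of the tangent space $v^{\perp}=T_v\HH^n$. Writing $h_i=\sinh\rho(O,F_i)=-\langle O,e_i\rangle$ and expanding the unit vector pointing from $v$ to $O$ in this basis, I would obtain $\sinh^2\rho(O,v)=\sum_{i=1}^{n}h_i^2$ and, for the geodesic line $L_i$ through $v$ carrying any edge $E_i$ at $v$, the formula $\sinh\rho(O,L_i)=\sqrt{\sinh^2\rho(O,v)-h_i^2}$. For $E$ itself this gives $\sinh^2\rho(O,\gamma)=\sum_{i=1}^{n-1}h_i^2$ and $h_n=\cosh\rho(O,\gamma)\,\sinh a$, where $a=\rho(u,v)$ and $u$ is the foot of the perpendicular from $O$ to $\gamma$; symmetrically $h_{n+1}=\cosh\rho(O,\gamma)\,\sinh b$ with $b=\rho(u,w)$.

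Next I would exploit the outermost hypothesis. Since $P$ is a compact Coxeter (hence acute-angled) polytope, each vertex is simple, so the tangent cone at $v$ is spanned by the $n$ mutually orthogonal edge directions; as $O$ is interior, the direction $v\to O$ has nonnegative components along all of them, in particular it makes an acute angle with $E$. The same holds at $w$, which forces $u$ to lie between $v$ and $w$; thus $\rho(O,E)=\rho(O,\gamma)=:d'$ and $\ell=a+b$. Now outermostness yields, for every other edge $E_i$ at $v$, the chain $d'=\rho(O,E)\ge\rho(O,E_i)\ge\rho(O,L_i)$, and substituting the formula above gives $h_i\ge h_n$ for $i=1,\dots,n-1$; the same argument at $w$ gives $h_i\ge h_{n+1}$. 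Consequently $(n-1)h_n^2\le\sum_{i=1}^{n-1}h_i^2=\sinh^2 d'$, and likewise $(n-1)h_{n+1}^2\le\sinh^2 d'$.

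Finally I would combine these bounds. From $\sinh a=h_n/\cosh d'$ I get $\sinh a\le \tanh d'/\sqrt{n-1}<1/\sqrt{n-1}$, and the same for $\sinh b$. Since $\cosh(a+b)$ is increasing in $a$ and $b$, the width satisfies $\cosh\ell=\cosh(a+b)\le 1+\frac{2\tanh^2 d'}{n-1}<1+\frac{2}{n-1}=\frac{n+1}{n-1}$, as required; the value $\frac{n+1}{n-1}$ is the degenerate limit $d'\to\infty$ and is never attained. The step I expect to be most delicate is establishing $\rho(O,E)=\rho(O,\gamma)$ together with the edge-by-edge comparison: it is there that the convex-geometric input (simplicity of the vertices $v,w$ and the fact that $O$ lies in each inward tangent cone) and the passage from segment distances to supporting-line distances must be handled with care, since the outermost hypothesis a priori compares distances to whole edges rather than to their supporting geodesics.
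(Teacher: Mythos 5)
Your proof is correct, and while it uses the outermost hypothesis in exactly the same way as the paper, the computation of the width is genuinely different. The paper works purely with Gram matrices: from outermostness and the distance formula (\ref{eq:rasst}) it derives precisely your inequalities $h_i\ge h_n,\,h_{n+1}$ for $i\le n-1$ (these are the inequalities (\ref{right-ineq}) there), and then obtains the width $T$ as the positive root of the quadratic equation $\det G(e_0,u_1,\dots,u_{n+1})=0$, which holds because $n+2$ vectors in $\R^{n,1}$ are linearly dependent; the bound follows by estimating this root. You instead identify the width geometrically: the framing normals are (up to sign) tangent to the edge line $\gamma$ at its endpoints, so the width equals $\cosh\ell$ with $\ell$ the edge length; splitting $\ell=a+b$ at the foot of the perpendicular from $O$ and converting the same inequalities into $\sinh a,\sinh b\le\tanh d'/\sqrt{n-1}$ then gives the bound via the addition formula. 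The two computations are in fact equivalent: with $A=\cosh^2 d'$, $y_n=\cosh d'\sinh a$, $y_{n+1}=\cosh d'\sinh b$, the paper's root $\bigl(y_ny_{n+1}+\sqrt{y_n^2y_{n+1}^2+AB}\bigr)/A$ collapses to $\cosh(a+b)$, so your argument is a geometric reading of the paper's algebra. What your route buys: it explains what the width is (the hyperbolic cosine of the edge length), makes the behaviour $\mathbf{t}(n)\to 1$ transparent, avoids the $(n+2)\times(n+2)$ determinant, and treats the delicate point soundly --- you use only the trivial inequality $\rho(O,E_i)\ge\rho(O,L_i)$ and prove $\rho(O,E)=\rho(O,\gamma)$ directly from the right-angled tangent cones at the two vertices, where the paper appeals to the standard fact that in acute-angled polytopes the distance from an interior point to a face equals the distance to its plane. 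What the paper's route buys: it needs none of your auxiliary geometric lemmas (tangency of $e_n,e_{n+1}$ to $\gamma$, location of the foot of the perpendicular inside $E$), and its Gram-matrix formalism is the one that extends to ridges that are not right-angled, cf.\ Proposition~\ref{th:w}.
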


\begin{remark}
It is easy to see that $\mathbf{t}(n) > 1$ (i.e., in the right-angled case the framing facets can not intersect even at infinity) and $\lim_{n \to \infty} \mathbf{t}(n) = 1$.

The ridge in Theorem~\ref{th1} also corresponds to the outermost edge.
\end{remark}

\vskip 0.2cm


In order to formulate the third main result of our paper, we introduce some notation:

\begin{itemize}
\item[1)]~$[C]$ is a quadratic lattice whose inner product in some
basis is given by a symmetric matrix $C$;

\item[2)]~$d(L) := \det C$ is the discriminant of the lattice $L = [C]$;

\item[3)]~$L \oplus M$ is the orthogonal sum of the lattices $L$ and $M$;

\item[4)]~$\OOO'(L)$ is the automorphism group of $L$ preserving $\HH^n$;

\item[5)]~$\OOO_r (L)$ is the subgroup of $\OOO'(L)$ generated by all
reflections contained in it.
\end{itemize}

Recall (see \cite[Prop. 3]{Vin72}) that $\OOO'(L) = \OOO_r(L) \rtimes H,$ where $H = \Sym(P) \cap \OOO'(L)$ and $P$ is the fundamental Coxeter polytope of the arithmetic hyperbolic reflection group $\OOO_r(L)$.

\begin{theoremC}\label{th3}
Every maximal sub-$2$-reflective  Lorentzian lattice
$L$ of signature $(3,1)$ over $\Z[\!\sqrt{2}]$ is isomorphic
to one in the following list:
\vskip 0.5 cm
\begin{center}
\begin{tabular}{c|c|c|c}
No. & $L$ & $\#$ facets & $d(L)$\\
\hline
$1$ & $[-1 - \sqrt{2}] \oplus [1] \oplus [1] \oplus [1]$ & $5$ & $-1 - \sqrt{2}$\\
\hline
$2$ & $[-1 - 2\sqrt{2}] \oplus [1] \oplus [1] \oplus [1]$ & $6$ & $-1 - 2\sqrt{2}$\\
\hline
$3$ & $[-5 - 4\sqrt{2}] \oplus [1] \oplus [1] \oplus [1]$ & $5$ & $-5 - 4\sqrt{2}$\\
\hline
$4$ & $[-11 - 8\sqrt{2}] \oplus [1] \oplus [1] \oplus [1]$ & $17$ & $-11 - 8\sqrt{2}$\\
\hline
$5$ & $[-\sqrt{2}] \oplus [1] \oplus [1] \oplus [1]$ & $6$ & $-\sqrt{2}$\\
\hline
$6$ & $[-7 - 5\sqrt{2}] \oplus [1] \oplus [1] \oplus [1]$ & $5$ & $-7 - 5\sqrt{2}$\\
\hline
$7$ &  $\begin{bmatrix}
2 & 0 & 0 & -1\\
0 & 2& -1 & -1\\
0 & -1 & 2 & -\sqrt{2}\\
-1 & -1 &  -\sqrt{2} & 2
\end{bmatrix}$ & $5$ & $-3-4\sqrt{2}$ \\
\end{tabular}
\end{center}
\vskip 0.2 cm
(Here, ``$\#$ facets'' denotes the number of facets of
the fundamental Coxeter polytope $P$ for the maximal
arithmetic
hyperbolic reflection subgroup $\OOO_r (L)$,
preserving
$L$.)

Moreover, these seven lattices are not isomorphic to each other and correspond to seven different automorphism groups. 

The Coxeter --- Vinberg diagrams of the fundamental Coxeter polytopes for the corresponding maximal arithmetic hyperbolic reflection groups $\OOO_r (L)$ are depicted in Fig.~\ref{fig:thC} excepting the group $\OOO_r(no.~4)$ whose diagram can be found in {\bf \texttt{SmaRBA}} \cite{SmaRBA}.
\end{theoremC}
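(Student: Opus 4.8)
The plan is to run the paper's ``geometry-to-arithmetic'' machine in reverse on the lattice side: use the ridge-width bound of Theorem~\ref{th1} to force the discriminant $d(L)$ into a finite set, enumerate the resulting candidate lattices over $\Z[\sqrt 2]$, and then decide reflectivity of each candidate by Vinberg's algorithm.

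First I would fix a maximal sub-$2$-reflective lattice $L$ of signature $(3,1)$ over $\Z[\sqrt 2]$ and let $P\subset\HH^3$ be the fundamental Coxeter polytope of the group generated by the sub-$2$-reflections in $\OOO'(L)$. Since $L$ is cocompact it is anisotropic, so $P$ is a compact Coxeter polytope and Theorem~\ref{th1} applies: $P$ contains a ridge $\Sigma_E$ of some type $\overline\alpha$ whose width $\cosh\rho(F_3,F_4)$ is strictly smaller than $\ta<5.75$. Writing $e_1,\dots,e_4$ for the sub-$2$-roots orthogonal to the facets of this ridge, the width equals $|(e_3,e_4)|/\sqrt{(e_3,e_3)(e_4,e_4)}$, while the angle data $\overline\alpha$ fixes the remaining inner products $(e_i,e_j)$ once the root norms are normalized. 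The crucial point is that the Gram matrix of $e_1,\dots,e_4$ is a $\Z[\sqrt 2]$-matrix whose determinant equals $[\,L:\langle e_1,\dots,e_4\rangle\,]^2\, d(L)$, so the small-width inequality bounds $|(e_3,e_4)|$, hence this determinant, and therefore $|d(L)|$ from above.

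The delicate step is turning this into an honest finiteness statement over $\Q(\sqrt 2)$. Because $\Z[\sqrt 2]$ is totally real and admissibility of the arithmetic group forces the conjugate form $L^\sigma$ to be positive definite, the conjugate discriminant $d(L)^\sigma$ is controlled as well; equivalently the norm $N_{\Q(\sqrt 2)/\Q}(d(L))$ is bounded, so $d(L)$ ranges over a finite set. Combined with the classification of rank-$4$ $\Z[\sqrt 2]$-lattices of signature $(3,1)$ with bounded discriminant, this leaves only finitely many isomorphism classes to inspect. I would carry out this enumeration with the program \texttt{SmaRBA} \cite{SmaRBA}, exactly as advertised in Remark~\ref{rem-ta-Q}, producing the explicit finite list of candidate Gram matrices, with the diagonal forms $[-a]\oplus[1]\oplus[1]\oplus[1]$ being the shape that survives in the table.

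Finally, for each surviving candidate I would run Vinberg's algorithm from a suitable basepoint $O$: successively adjoin the sub-$2$-roots nearest to $O$, build the Coxeter--Vinberg diagram, and stop when the accumulated facets cut out a polytope of finite volume. Reflectivity (finite index of the reflection subgroup in $\OOO'(L)$) is confirmed by verifying that the diagram defines a compact polytope, whose facet count I record; maximality is checked via the decomposition $\OOO'(L)=\OOO_r(L)\rtimes H$, ensuring $L$ is not a proper finite-index sublattice of another sub-$2$-reflective lattice with the same group. Discarding the non-maximal candidates and the isomorphic duplicates should leave precisely the six lattices of the table, and comparing their diagrams (Fig.~\ref{fig:thC}) shows the six automorphism groups are pairwise distinct. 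I expect the genuine obstacle to be neither the enumeration nor the diagram-drawing, but the two verifications inside Vinberg's algorithm: certifying termination (finite covolume) for each candidate and proving that no two candidates yield the same group, since this is where the geometric bound no longer assists and one must argue case by case.
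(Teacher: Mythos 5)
Your skeleton matches the paper's: apply Theorem~\ref{th1} (via Proposition~\ref{th:w}) to the ridge of the fundamental polytope, extract the four roots $u_1,\dots,u_4$, deduce finiteness of the candidate lattices, and then decide each candidate with Vinberg's algorithm. But there are two genuine gaps. The first is in the enumeration step: you bound $|d(L)|$ and $|d(L)^\sigma|$ and then appeal to ``the classification of rank-$4$ $\Z[\!\sqrt{2}]$-lattices of signature $(3,1)$ with bounded discriminant'' --- no such classification is available, and this is not what \texttt{SmaRBA} does. The effective route (the paper's) is the other way around: the entries $(u_i,u_j)$ lie in an explicit finite set (the norms divide $2$, the angles are crystallographic, and the one possibly divergent pair is bounded by the ridge inequality together with its Galois conjugate), so one lists the finitely many Gram matrices $G(u_1,\dots,u_4)$ directly ($83$, reducing to $G_1$--$G_{15}$), and then recovers $L$ as a finite-index overlattice of $L'=\langle u_1,\dots,u_4\rangle$ using $[(L')^*:L']^2=|d(L')|$, so that $[L:L']^2$ divides $|d(L')|$. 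Bounded discriminant alone gives finiteness in principle but not the explicit candidate list you need.

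The second and more serious gap is in the verification step. Of the $14$ candidates, $8$ are \emph{not} sub-$2$-reflective, and for those Vinberg's algorithm run on $\mathcal{S}(L)$ (sub-$2$-roots only, as you propose) simply never terminates; ``stop when the accumulated facets cut out a finite-volume polytope'' is not a decision procedure, and you acknowledge but do not resolve this. The paper resolves it with two additional tools you would need: (a) run Vinberg's algorithm for the \emph{full} reflection group $\OOO_r(L)$ and apply the ``bad reflections'' criterion (Lemma~\ref{lem:badrefl}): $L$ is sub-$2$-reflective iff it is reflective and the group generated by the reflections that are not sub-$2$-reflections is finite --- this disposes of $L(3)$, $L(4)$, $L(8)$, $L(9)$, $L(10)$, which are reflective but have two divergent ``bad'' mirrors; and (b) for $L(12)$, $L(13)$, $L(14)$, which are not reflective at all, the method of infinite symmetry (Bugaenko's lemmas, \S~\ref{ssec:infsym}): exhibit symmetries of a partial Coxeter --- Vinberg diagram preserving $L$ and acting without fixed points in $\HH^3$, proving $[\OOO'(L):\OOO_r(L)]=\infty$. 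Without these two negative-certification methods the case analysis cannot be completed, so the proposal as written does not yield the list of exactly six lattices.
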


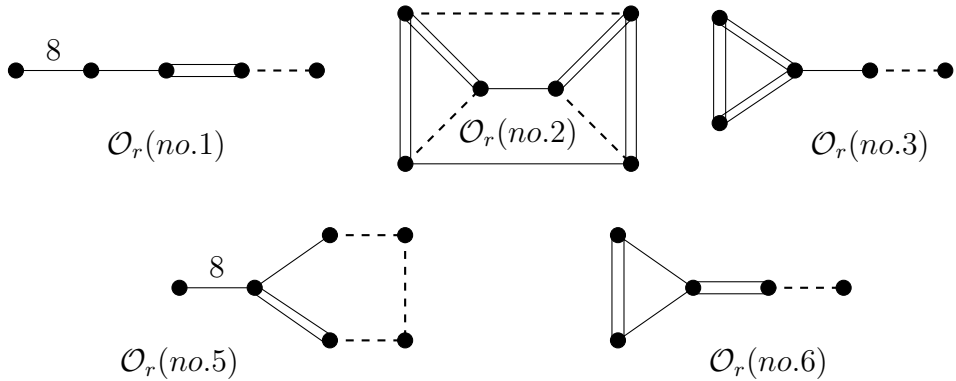
\begin{figure}
\begin{center}
\begin{tikzpicture}
\draw[fill=black]
(0,0)   circle [radius=.1]  
(1,0)   circle [radius=.1]  
(2,0)   circle [radius=.1] node at (2,-1) {$\OOO_r(no.1)$}
(3,0)   circle [radius=.1] 
(4,0)   circle [radius=.1] 

;
\draw

(0,0) -- (1,0) node [midway,above] {$8$}
(1,0) -- (2,0)
(2,0.08) -- (3,0.08)
(2,-0.08) -- (3,-0.08);

\draw[thick, dashed] (3,0) -- (4,0)   
;
\end{tikzpicture}
\qquad \qquad  
\begin{tikzpicture}
\draw[fill=black]
(0,0)   circle [radius=.1]  

(1,0)   circle [radius=.1]  

(2,1)   circle [radius=.1]  

(2,-1)   circle [radius=.1] 

(-1,1)   circle [radius=.1]  

(-1,-1)   circle [radius=.1] 

;
\draw

(-1.07,-1) -- (-1.07,1)
(-0.93,-1) -- (-0.93,1)

(2.07,-1) -- (2.07,1)
(1.93,-1) -- (1.93,1)

(0,0.08) -- (-0.92,1)
(-0.08,0) -- (-1,0.92)

(1.08,0) -- (2,0.92)
(1,0.08) -- (1.92,1)

(0,0) -- (1,0) node at (0.5,-0.6) {$\OOO_r(no.2)$}
(-1,-1) -- (2,-1)

 ;

\draw[thick, dashed] (1,0)  -- (2,-1);
 
\draw[thick, dashed] (-1,-1)  -- (0,0);
 
\draw[thick, dashed] (-1,1) -- (2,1);

\end{tikzpicture}
\qquad \qquad  
\begin{tikzpicture}
\draw[fill=black]
(0,0.7)   circle [radius=.1]  
(0,-0.7)   circle [radius=.1]  
(1,0)   circle [radius=.1]  
(2,0)   circle [radius=.1] node at (2,-1) {$\OOO_r(no.3)$}
(3,0)   circle [radius=.1]


;
\draw

(0.08,-0.7) -- (1,-0.08) 
(0,-0.62) -- (0.92,0)

(0,0.62) -- (0.92,0)
(0.08,0.7) -- (1,0.08)

(1,0) -- (2,0)

(0.08,-0.7) -- (0.08,0.7)
(-0.08,-0.7) -- (-0.08,0.7)
;

\draw[thick, dashed] (2,0) -- (3,0)   
;
\end{tikzpicture}
\vskip 0.7cm

\begin{tikzpicture}
\draw[fill=black]
(0,0)   circle [radius=.1]  
node at (0,-1) {$\OOO_r(no.5)$}
(1,0)   circle [radius=.1]  
(2,0.7)   circle [radius=.1] 
(2,-0.7)   circle [radius=.1] 
(3,0.7)   circle [radius=.1] 
(3,-0.7)   circle [radius=.1] 

;
\draw

(0,0) -- (1,0) node [midway,above] {$8$}
(1,0) -- (2,0.7)
(1,-0.08) -- (2,-0.78)
(1.07,0) -- (2,-0.62)
;

\draw[thick, dashed] (2,0.7) -- (3,0.7) ; 
\draw[thick, dashed] (2,-0.7) -- (3,-0.7)  ;
\draw[thick, dashed] (3,0.7) -- (3,-0.7)  
;
\end{tikzpicture} 
\qquad  \qquad  
\begin{tikzpicture}
\draw[fill=black]
(0,0.7)   circle [radius=.1]  
(0,-0.7)   circle [radius=.1]  
(1,0)   circle [radius=.1]  
(2,0)   circle [radius=.1] 
node at (2,-1) {$\OOO_r(no.6)$}
(3,0)   circle [radius=.1] 


;
\draw

(0,-0.7) -- (1,0) 
(0,0.7) -- (1,0)
(1,0.08) -- (2,0.08)
(1,-0.08) -- (2,-0.08)
(0.08,-0.7) -- (0.08,0.7)
(-0.08,-0.7) -- (-0.08,0.7)
;

\draw[thick, dashed] (2,0) -- (3,0)   
;
\end{tikzpicture}
\qquad \qquad  
\begin{tikzpicture}
\draw[fill=black]
(0,0.7)   circle [radius=.1]  
(0,-0.7)   circle [radius=.1] 
(1,0.7)   circle [radius=.1]  
(1,-0.7)   circle [radius=.1]  
(2,0)   circle [radius=.1]  

node at (2,-1) {$\OOO_r(no.7)$}


;
\draw

(0,0.7) -- (1,0.7)

(0,-0.7) -- (1,-0.7)

(1,-0.7) -- (2,0) 
(1,0.7) -- (2,0)

(1.08,-0.7) -- (1.08,0.7)
(0.92,-0.7) -- (0.92,0.7)
;

\draw[thick, dashed] (0,0.7) -- (0,-0.7)   
;
\end{tikzpicture}
\end{center}
\caption{The Coxeter --- Vinberg diagrams for lattices no.1--no.3 \& no.5--no.7}
\label{fig:thC}
\end{figure}

\begin{remark}
The program {\bf \texttt{SmaRBA}} obtained $83$ candidate $\Z[\!\sqrt{2}]$-lattices (up to finite extensions and pairwise isomorphisms) for sub-$2$-reflectivity via the bound $\ta$ from Theorem~\ref{th1}, while using Nikulin's estimate leaves us around $160$ lattices (again up to finite extensions and pairwise isomorphisms). One can see that the difference between the $\ta$-method and Niku\-lin's one is not as big as for the ground field $\Q$ (cf. Rem.~\ref{rem-ta-Q}). The author believes that this can be explained by the  admissibility condition (it is very restrictive) of Lorentzian quadratic forms over $\Q(\!\sqrt{2})$.
\end{remark}
\vskip 0.2cm

The author hopes that analysing small ridges can become a useful tool for classifying not only
sub-$2$-reflective Lorentzian lattices, but reflective lattices in general.

\subsection*{Organization of the paper}
The paper is organized as follows.  In
\textbf{\S}~\ref{sec:prelim} we provide  some preliminary results. Then,
\textbf{\S}~\ref{section:th1} is devoted to the proof of  Theorem~\ref{th1} (the proof of Corollary~\ref{corol:th1} is presented in \textbf{\S}~\ref{subsect:corolth1}) and
\textbf{\S}~\ref{section:th2} is devoted to the proof of  Theorem~\ref{th2}.

The proof of  Theorem~\ref{th1} is based on
Theorem~\ref{th:lenedge} (where an explicit upper bound for the length of the outermost edge of a compact acute-angled polytope in $\HH^3$ is obtained) and Proposition~\ref{th:w} (with an explicit formula for $\ta$). A more detailed plan of the proof of  Theorem~\ref{th1} is described in
\textbf{\S}~\ref{subsection:th1}.

Some definitions and facts concerning arithmetic hyperbolic reflection groups and reflective Lorentzian lattices are collected in
\textbf{\S}~\ref{sec:arith}. Finally,
\textbf{\S}~\ref{section:appl} is a description of applications of  Theorem~\ref{th1} to
classification of sub-$2$-reflective Lorentzian lattices and \textbf{\S}~\ref{section:th3} contains the proof of  Theorem~\ref{th3}.

\subsection*{Acknowledgments}

The author is grateful to Daniel Allcock and Alexander Kolpakov
for valuable discussions, helpful remarks and suggestions, to Stepan Alexandrov for remarks and corrections, and to Markus Kirschmer for his help with lattices over number fields.
The author is also
thankful to Institut des Hautes \'Etudes Scientifiques
--- IHES, and especially to Fanny Kassel,
for their
hospitality while this work was carried out.
Finally, the author thanks two anonymous referees and the editor Mikhail Kapovich for many constructive remarks and suggestions, which helped to significantly improve the paper.

\subsection*{Funding}
This work was supported by the RFBR grant 18-31-00427.
A software implementation {\bf \texttt{VinAl}} \cite{VinAlg2017} used in this paper for testing a reflectivity of candidate-lattices (see \textbf{\S~\ref{section:th3}}) was supported by the Russian Science Foundation (project no. 18-71-00153).

\subsection*{Dedication to \`Ernest Borisovich Vinberg (26.07.1937 -- 12.05.2020)}

This paper is dedicated to \`Ernest B. Vinberg, who was my scientific advisor. He is known for his fundamental results and breakthrough discoveries in discrete subgroups of Lie groups, Lie groups and algebras, as well as in invariant theory, representation theory, and algebraic geometry. 

One of his most beautiful discoveries is the theory of hyperbolic reflection groups initiated in 1967. There Vinberg described fundamental domains (hyperbolic Coxeter polytopes) for such groups in terms of their Gram matrices and Coxeter --- Vinberg diagrams. He also provided an arithmeticity criterion for hyperbolic reflection groups of finite covolume. In 1972, Vinberg developed an algorithm that is now widely used for constructing fundamental polytopes of hyperbolic reflection groups. In 1981, he obtained the following celebrated and surprising result: there are no compact hyperbolic Coxeter polytopes and no arithmetic finite volume Coxeter polytopes in $\HH^{\geqslant 30}$. In 1983, Vinberg was an Invited Speaker at the International Congress of Mathematicians. In 2014, he constructed the first examples of higher-dimensional non-arithmetic non-compact hyperbolic Coxeter polytopes.
 
Vinberg was a great mathematician who liked finding symmetries and transformation groups in the realm of mathematics and beyond, thus giving this concept some more philosophical meaning. He created a large mathematical school with many students: some of them have become famous mathematicians.
 
To this day, I am very grateful to \`Ernest B. Vinberg, whose encouragement, constant help and invaluable advice were so important for me over the years.

\section{Preliminaries}\label{sec:prelim}

\subsection{Hyperbolic Lobachevsky space and convex polytopes}

Let $\R^{n,1}$ be the $(n+1)$-dimensional
pseudo-Euclidean real
\emph{Minkowski space}
equipped with the inner product
$$
(x, y) = - x_0 y_0 + x_1 y_1 + \ldots + x_n y_n
$$
of signature $(n,1)$.
A \emph{vector model} of the \emph{$n$-dimensional hyperbolic Lobachevsky space} $\HH^n$ is the above component of the standard hyperboloid lying in the \emph{future light
cone}:
$$
\HH^n = \{x \in \R^{n,1} \mid (x,x) = -1,\ x_0 > 0\}.
$$
The points of $\HH^n$ are called
 \emph{proper points}.
The \emph{points at infinity} (or on \emph{the boundary} $\partial
\HH^n$) in this model correspond to \emph{isotropic
one-dimensional subspaces} of $\R^{n,1}$, that is, vectors $x \in \R^{n,1}$ such that $(x,x) = 0$.

The hyperbolic metric $\rho$ is given by
$$
\cosh \rho (x, y) = - (x, y).
$$

Let $\OO_{n,1} (\R)$ be the group of
orthogonal transformations of the space $\R^{n,1}$, and let $\OO^+_{n,1}(\R)$ be
its subgroup of index $2$ preserving $\HH^n$.
The group
$\OO^+_{n,1}(\R) \simeq \Isom(\HH^n)$ is the \emph{isometry
group} of the hyperbolic $n$-space $\HH^n$.

Suppose that $e \in \R^{n,1}$ is a unit vector (that is, $(e,e) = 1$). Then the set
$$
H_{e} = \{x \in \HH^n \mid (x,e) = 0\}
$$
is a \emph{hyperplane} in $\HH^n$ and it divides the entire space into the \emph{half-spaces}
$$
H^-_e = \{x \in \HH^n \mid (x,e) \leqslant  0\}, \qquad H^+_e = \{x \in \HH^n \mid (x,e) \geqslant 0\}.
$$
The orthogonal transformation given by the formula
$$
\mathcal{R}_e (x) = x - 2 (e, x) e,
$$
is called the \emph{reflection in the hyperplane}
$H_e$, which
is called the \emph{mirror} of $\mathcal{R}_e$.

\begin{definition}
A convex polytope in $\HH^n$ is the intersection of finitely many half-spaces that has non-empty interior. 
A generalized convex polyhedron is the intersection (with non-empty interior) of a family (possibly, infinite) 
of half-spaces such that every ball intersects only finitely many of their boundary hyperplanes.
\end{definition}

\begin{definition}
A generalized convex polyhedron is said to be acute-angled if all its dihedral angles do not exceed $\pi/2$. A generalized convex polyhedron is called a Coxeter polyhedron if all its dihedral angles are of the form $\pi/k$, where $k \in \{2,3,4,\ldots,+\infty\}$.
\end{definition}

It is known that the fundamental domains of discrete reflection groups are generalized Coxeter polyhedra (see \cite{Vin67, Vin85, VS88}).

A convex polytope has finite volume if and only if
it is equal to the
convex hull of finitely many points of the closure $\overline{\HH^n} = \HH^n \cup \partial \HH^n$. If a polytope is compact,
 then it is a convex hull of finitely many proper points of $\HH^n$.

It is also known \cite{Vin85, AVS88} that compact acute-angled polytopes, in particular, compact Coxeter polytopes in
$\HH^n$, are \emph{simple}, that is, every vertex
belongs to exactly $n$ facets (and $n$ edges). 

\subsection{Bounds for the length of the outermost edge
for a compact acute-angled polytope in $\mathbb{H}^3$}
\label{ss2.3}

In this subsection, $P$ denotes a compact acute-angled polytope in the three-dimensional
Lobachevsky space $\mathbb{H}^3$.
Following Nikulin \cite[Theorem 4.1.1]{Nik81b}, we consider an interior point $O$ in $P$. Let $E$ be
\textit{the outermost
edge from it} and
$V_1$ and $V_2$ be the vertices of $E$.
Recall that in an acute-angled polytope,
the distance from an interior point to a
face (of any dimension) is equal to the
distance to the plane of this face.

Let $F_1$ and $F_2$ be the facets of $P$ containing the edge $E$ and let $F_3$ and $F_4$ be the framing facets of $E$.
Let $E_1$ and $E_3$ be the edges of the polytope $P$ outgoing
from the vertex $V_1$ and let $E_2$ and $E_4$ be the edges outgoing from $V_2$ such that the edges
$E_1$ and $E_2$ lie in the face $F_1$. The length of the edge $E$ is
denoted by $a$, and the plane angles between the edges
$E_j$ and $ E $ are denoted by $\alpha_j$ (see Fig.~\ref{fig1}).

Denote by $V_1 I$, $V_2 I$, $V_1 J$, $V_2 J$ the bisectors of angles $\alpha_1$,
$\alpha_2$, $\alpha_3$, $\alpha_4$, respectively. Let $h_I$ and $h_J$ be
the distances from the points $I$ and $J$ to the edge $E$.

\begin{figure}[!htp]
\begin{center}
\includegraphics{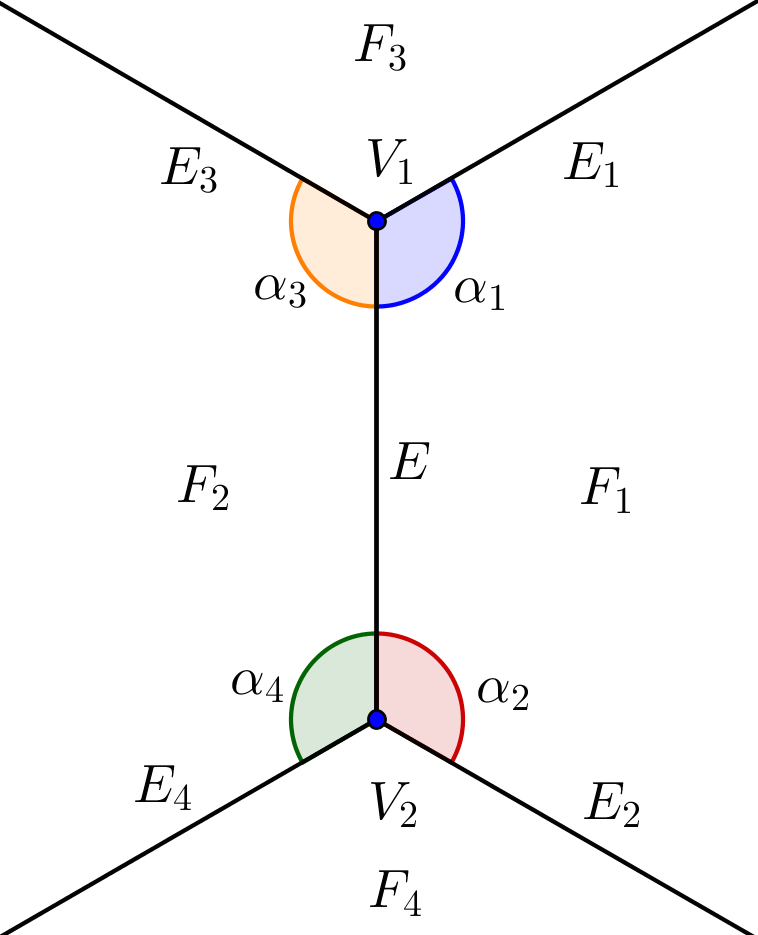}
\caption{The outermost edge}
\label{fig1}
\end{center}
\end{figure}

The next theorem was proved by the author
(Bogachev, 2019, \cite[Theorem 2.1]{Bog19}),
but the formulation is slightly corrected.

\begin{theorem}
\label{th:lenedge}
If $h_J \leqslant h_I$, then the length of the outermost edge satisfies the inequality
$$
a <
\operatorname{arcsinh} \biggl(\frac{\cos ({\alpha_{12}}/{2})}
{\tan ({\alpha_3}/{2})}\biggr)+\operatorname{arcsinh} \biggl(\frac{\cos
({\alpha_{12}}/{2})}{\tan ({\alpha_4}/{2})} \biggr).
$$
\end{theorem}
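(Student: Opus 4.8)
The plan is to reduce the whole statement to the single inequality $\tanh h_J<\cos(\alpha_{12}/2)$ and then derive that inequality from the maximality of $E$ among the edges. First I would work inside the plane of the facet $F_2$. Since $J$ is the intersection of the bisectors of the plane angles $\alpha_3$ and $\alpha_4$, it is equidistant from the three lines $E$, $E_3$, $E_4$, and the foot $K$ of the perpendicular from $J$ to $E$ lies in the interior of the segment $E$ (the bisectors tilt inward because $P$ is acute-angled). Splitting $E$ at $K$ and applying the hyperbolic right-triangle relation $\tan(\alpha/2)=\tanh(h_J)/\sinh(\cdot)$ in the right triangles $V_1KJ$ and $V_2KJ$ gives the identity
$$
a=\arcsinh\!\Big(\tfrac{\tanh h_J}{\tan(\alpha_3/2)}\Big)+\arcsinh\!\Big(\tfrac{\tanh h_J}{\tan(\alpha_4/2)}\Big).
$$
As $\arcsinh$ is increasing, the asserted bound on $a$ follows immediately once $\tanh h_J<\cos(\alpha_{12}/2)$ is known.

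Next I would project $O$ orthogonally onto the planes of $F_1$ and $F_2$, obtaining feet $O_1,O_2$ and heights $c_i=\rho(O,F_i)$. Because $E,E_3,E_4$ lie in the plane of $F_2$ and $O_2$ is the foot of the perpendicular from $O$ to that plane, one has $\cosh\rho(O,\ell)=\cosh c_2\,\cosh\rho(O_2,\ell)$ for every line $\ell$ in this plane; since $E$ is the outermost edge, $\rho(O,E)\ge\rho(O,E_3)$ and $\rho(O,E)\ge\rho(O,E_4)$, and cancelling $\cosh c_2$ gives $\rho(O_2,E)\ge\rho(O_2,E_3)$ and $\rho(O_2,E)\ge\rho(O_2,E_4)$. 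Thus $O_2$ lies in the region $S_2\subset F_2$ of points at least as far from $E$ as from $E_3$ and from $E_4$; this region is the angular sector opposite to $E$ cut out by the two internal bisectors, whose apex is $J$, so $J$ is the point of $S_2$ nearest to the line $E$, and therefore $\rho(O_2,E)\ge h_J$. The identical argument in $F_1$ yields $\rho(O_1,E)\ge h_I$.

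Then I would pass to the totally geodesic plane $\Pi$ through the common perpendicular foot $K_0$ of $O$ on $E$, taken perpendicular to $E$. One checks that $O,O_1,O_2$ all lie in $\Pi$ (each has the same foot $K_0$ on $E$), that $\Pi$ meets $F_1$ and $F_2$ in two geodesics through $K_0$ forming the dihedral angle $\alpha_{12}$, and that $O_1,O_2$ are the feet of the perpendiculars from $O$ to these two geodesics. Writing $\theta\in(0,\alpha_{12})$ for the angle at $K_0$ between $K_0O$ and the $F_2$-side, the right-triangle relation gives $\tanh\rho(O_2,E)=\tanh\rho(O,E)\cos\theta$ and $\tanh\rho(O_1,E)=\tanh\rho(O,E)\cos(\alpha_{12}-\theta)$. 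Combining with the previous step, $\tanh h_J\le\tanh\rho(O,E)\cos\theta$, while the hypothesis $h_J\le h_I$ gives $\tanh h_J\le\tanh h_I\le\tanh\rho(O,E)\cos(\alpha_{12}-\theta)$. Since one of $\theta,\alpha_{12}-\theta$ is at least $\alpha_{12}/2$, the smaller of the two cosines is at most $\cos(\alpha_{12}/2)$, so $\tanh h_J\le\tanh\rho(O,E)\cos(\alpha_{12}/2)<\cos(\alpha_{12}/2)$ because $\tanh\rho(O,E)<1$, which is exactly what the first paragraph requires.

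The main obstacle is the geometric lemma of the second paragraph: that the projection $O_2$ really lands in the sector $S_2$ and that its apex $J$ is the point of $S_2$ nearest to $E$, so that $\rho(O_2,E)\ge h_J$. Here one must verify that the perpendicular foot $K_0$ of $O$ on $E$ lies in the interior of the segment (so that the outermost-edge inequalities may be read off for the supporting line of $E$ and not merely for the segment), and that the relevant branch of each equidistant locus is the internal bisector; both facts rely on $P$ being acute-angled. The assumption $h_J\le h_I$ enters precisely to symmetrize the two cosine bounds, allowing their minimum to be replaced by $\cos(\alpha_{12}/2)$ regardless of the angular position $\theta$ of $O$ inside the wedge.
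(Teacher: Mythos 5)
Your proposal is correct and follows essentially the same route as the paper's, whose ``proof'' is only a citation of \cite{Bog19}: your three steps --- the exact identity $a=\arcsinh\bigl(\tanh h_J/\tan(\alpha_3/2)\bigr)+\arcsinh\bigl(\tanh h_J/\tan(\alpha_4/2)\bigr)$ in the face $F_2$, the outermost-edge/bisector lemma $\rho(O_2,E)\ge h_J$ (and $\rho(O_1,E)\ge h_I$), and the wedge estimate in the plane through $K_0$ perpendicular to $E$ giving $\tanh h_J<\cos(\alpha_{12}/2)$ --- reconstruct precisely the argument of that reference. In particular, the constant you arrive at is the angle-of-parallelism distance, $\cos(\alpha_{12}/2)=\tanh\bigl(\log\cot(\alpha_{12}/4)\bigr)$, which is exactly the identity recorded in the paper's proof note.
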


\begin{proof}
See \cite[Theorem 2.1]{Bog19} and note that
$\tanh(\log(\cot(\alpha_{12}/4))) = \cos(\alpha_{12}/2)$.
 \end{proof}
 
\vskip 0.2cm 

Let us introduce the following notation:
$$
F_{i,j} (\overline{\alpha}) :=
\arcsinh \left(\frac{\cos\left(\frac{\alpha_{12}}{2}\right)}{\tan\left(\frac{\alpha_i}{2}\right)}\right) + \arcsinh \left(\frac{\cos\left(\frac{\alpha_{12}}{2}\right)}{\tan\left(\frac{\alpha_j}{2}\right)}\right).
$$

\begin{corollary}\label{corol:leng}
 The following inequality holds:
$$
\cosh a < \max\{\cosh F_{1,2} (\overline{\alpha}), \cosh F_{3,4} (\overline{\alpha})\}.
$$
\end{corollary}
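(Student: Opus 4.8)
The plan is to deduce the corollary directly from Theorem~\ref{th:lenedge}, using a relabelling symmetry to remove its normalising hypothesis and then the monotonicity of $\cosh$. The only asymmetry in the configuration is the choice of which of the two facets containing $E$ is called $F_1$ and which is $F_2$. Theorem~\ref{th:lenedge} was stated under the hypothesis $h_J \le h_I$ and bounds $a$ in terms of $F_{3,4}(\overline\alpha)$; I would exploit the freedom to swap $F_1$ and $F_2$ to cover the complementary case as well.

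First I would record how the relevant quantities transform under the transposition $F_1 \leftrightarrow F_2$. Since $E_1, E_2 \subset F_1$ while $E_3, E_4 \subset F_2$, this swap interchanges the two pairs of outgoing edges, hence interchanges the plane angles $\alpha_1 \leftrightarrow \alpha_3$ and $\alpha_2 \leftrightarrow \alpha_4$; it also interchanges the bisector-intersection points $I \leftrightarrow J$ and therefore the heights $h_I \leftrightarrow h_J$. By contrast the edge $E$, its length $a$, and the dihedral angle $\alpha_{12}$ between $F_1$ and $F_2$ are all invariant under the transposition. Consequently the quantity $F_{3,4}(\overline\alpha)$ evaluated in the swapped labelling coincides with $F_{1,2}(\overline\alpha)$ evaluated in the original labelling.

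Next I would split into the two exhaustive (and harmlessly overlapping) cases $h_J \le h_I$ and $h_I \le h_J$. In the first case Theorem~\ref{th:lenedge} applies verbatim and gives $a < F_{3,4}(\overline\alpha)$. In the second case I would apply Theorem~\ref{th:lenedge} to the relabelled polytope: its hypothesis is exactly $h_I \le h_J$ in the original labelling, and by the previous paragraph its conclusion reads $a < F_{1,2}(\overline\alpha)$. In either case we obtain
$$
a < \max\{F_{1,2}(\overline\alpha),\, F_{3,4}(\overline\alpha)\}.
$$

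Finally, since $a \ge 0$ and $F_{1,2}(\overline\alpha), F_{3,4}(\overline\alpha) \ge 0$, applying the function $\cosh$, which is strictly increasing on $[0,\infty)$, yields
$$
\cosh a < \cosh\bigl(\max\{F_{1,2}(\overline\alpha),\, F_{3,4}(\overline\alpha)\}\bigr) = \max\{\cosh F_{1,2}(\overline\alpha),\, \cosh F_{3,4}(\overline\alpha)\},
$$
which is the assertion. I do not expect any serious obstacle: the corollary is essentially a repackaging of Theorem~\ref{th:lenedge}, and the only point requiring genuine care is to verify that the $F_1 \leftrightarrow F_2$ relabelling indeed fixes $E$, $a$ and $\alpha_{12}$ while correctly permuting the remaining data, so that the two cases together exhaust all possibilities.
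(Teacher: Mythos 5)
Your proof is correct and matches the paper's intent exactly: the paper states this corollary without proof precisely because it follows from Theorem~\ref{th:lenedge} by the $F_1 \leftrightarrow F_2$ relabelling symmetry (choosing $F_{3,4}$ or $F_{1,2}$ according to whether $h_J \le h_I$ or $h_I \le h_J$) together with the monotonicity of $\cosh$ on $[0,\infty)$. Your careful verification that the swap fixes $E$, $a$, $\alpha_{12}$ and interchanges $\alpha_1 \leftrightarrow \alpha_3$, $\alpha_2 \leftrightarrow \alpha_4$, $h_I \leftrightarrow h_J$ is exactly the point the paper leaves implicit.
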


\subsection{Auxiliary lemmas}

\begin{lemma}
\label{l2.1}
The following relations are true:

{\rm(i)}~$\alpha_{12}+\alpha_{23}+\alpha_{13} > \pi,
\quad \alpha_{12}+\alpha_{24}+\alpha_{14} > \pi$;

{\rm(ii)}
\begin{alignat*}{2}
\cos \alpha_1&=\frac{\cos \alpha_{23}+\cos \alpha_{12} \cdot \cos \alpha_{13}}
{\sin \alpha_{12} \cdot \sin \alpha_{13}}, &\qquad
\cos \alpha_2&=\frac{\cos \alpha_{24}+\cos \alpha_{12} \cdot \cos \alpha_{14}}
{\sin \alpha_{12} \cdot \sin \alpha_{14}},
\\
\cos \alpha_3&=\frac{\cos \alpha_{13}+\cos \alpha_{12} \cdot \cos \alpha_{23}}
{\sin \alpha_{12} \cdot \sin \alpha_{23}}, &\qquad
\cos \alpha_4&=\frac{\cos \alpha_{14}+\cos \alpha_{12} \cdot \cos \alpha_{24}}
{\sin \alpha_{12} \cdot \sin \alpha_{24}}.
\end{alignat*}
\end{lemma}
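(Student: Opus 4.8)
The plan is to read off both parts from the spherical geometry of the two vertices of the outermost edge. Since $P$ is a compact, hence simple, acute-angled polytope in $\HH^3$, exactly three facets meet at each vertex. At $V_1$ these are $F_1$, $F_2$, and the framing facet $F_3$ containing $V_1$; their pairwise intersections are the three edges emanating from $V_1$, namely $E = F_1 \cap F_2$, together with $E_1 = F_1 \cap F_3$ and $E_3 = F_2 \cap F_3$ (consistent with $E_1 \subset F_1$ as specified). Symmetrically, at $V_2$ the facets are $F_1$, $F_2$, $F_4$, with edges $E$, $E_2 = F_1 \cap F_4$, and $E_4 = F_2 \cap F_4$. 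I would then pass to the \emph{link} of each vertex, i.e. intersect the trihedral corner of $P$ with a small sphere centered at the vertex, obtaining a spherical triangle.

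Concretely, the link of $V_1$ is a nondegenerate spherical triangle whose three \emph{sides} are the plane angles between the edges at $V_1$ (so the side between the directions of $E$ and $E_1$ has length $\alpha_1$, and the side between $E$ and $E_3$ has length $\alpha_3$), and whose three \emph{angles} are the dihedral angles of $P$ along those edges, namely $\alpha_{12}$ along $E$, $\alpha_{13}$ along $E_1$, and $\alpha_{23}$ along $E_3$. Likewise the link of $V_2$ is a spherical triangle with angles $\alpha_{12}$, $\alpha_{14}$, $\alpha_{24}$, in which the side opposite $\alpha_{24}$ has length $\alpha_2$ and the side opposite $\alpha_{14}$ has length $\alpha_4$. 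Nondegeneracy (positive area) holds because $P$ is a genuine $3$-dimensional polytope with proper vertices, so each trihedral corner is honest.

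For part (i) I would invoke the angle-excess (Gauss–Bonnet) formula for spherical triangles: the sum of the interior angles of a nondegenerate spherical triangle exceeds $\pi$ by exactly its area. Applied to the link of $V_1$ this gives $\alpha_{12}+\alpha_{13}+\alpha_{23} > \pi$, and applied to the link of $V_2$ it gives $\alpha_{12}+\alpha_{14}+\alpha_{24} > \pi$, which are precisely the two asserted inequalities. For part (ii) the tool is the dual (polar) spherical law of cosines, expressing one side of a triangle through its three angles: if the angles are $A,B,C$ and $c$ is the side opposite $C$, then $\cos C = -\cos A\cos B + \sin A\sin B\cos c$. In the link of $V_1$, taking the side $\alpha_1$ opposite the angle $\alpha_{23}$ and solving for $\cos\alpha_1$ yields $\cos\alpha_1 = \bigl(\cos\alpha_{23} + \cos\alpha_{12}\cos\alpha_{13}\bigr)/\bigl(\sin\alpha_{12}\sin\alpha_{13}\bigr)$; taking the side $\alpha_3$ opposite $\alpha_{13}$ gives the stated formula for $\cos\alpha_3$. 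The two remaining identities follow verbatim from the link of $V_2$.

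The only delicate point — and the main (though mild) obstacle — is the bookkeeping: one must correctly match each plane angle $\alpha_j$ to the appropriate side of the correct spherical triangle and each dihedral angle $\alpha_{ij}$ to the correct vertex of that triangle. This matching is forced by the incidence pattern of $F_1,\dots,F_4$ along the edges at $V_1$ and $V_2$ recorded above; once it is fixed, both parts reduce to elementary spherical trigonometry with no further computation.
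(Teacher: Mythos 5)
Your proof is correct: the links of $V_1$ and $V_2$ are nondegenerate spherical triangles with angles $\alpha_{12},\alpha_{13},\alpha_{23}$ and $\alpha_{12},\alpha_{14},\alpha_{24}$ respectively, your matching of the sides $\alpha_1,\alpha_3$ (resp. $\alpha_2,\alpha_4$) to the opposite angles $\alpha_{23},\alpha_{13}$ (resp. $\alpha_{24},\alpha_{14}$) is exactly what the incidence pattern forces, and then the spherical angle-excess inequality gives (i) while the polar law of cosines gives (ii). The paper itself offers no argument here, deferring entirely to \cite[Lemma 2.1]{Bog19}, and the vertex-link plus spherical-trigonometry route you take is the standard proof used there, so your write-up is a correct, self-contained version of essentially the same approach.
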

\begin{proof}
See \cite[Lemma 2.1]{Bog19}.
\end{proof}

\begin{lemma}\label{l2.2}
The following expression for $\cosh F_{i,j} (\overline{\alpha})$ holds:
\begin{small}
$$
\frac{2\cos^2\left(\frac{\alpha_{12}}{2}\right)\cos \left(\frac{\alpha_i}{2}\right) \cos\left(\frac{\alpha_j}{2}\right) +
2\sqrt{\cos^2\left(\frac{\alpha_{12}}{2}\right)\cos^2\left(\frac{\alpha_{i}}{2}\right)+\sin^2\left(\frac{\alpha_i}{2}\right)}
\sqrt{\cos^2\left(\frac{\alpha_{12}}{2}\right)\cos^2\left(\frac{\alpha_{j}}{2}\right)+\sin^2\left(\frac{\alpha_j}{2}\right)}}
{\sqrt{1 - \cos \alpha_i} \sqrt{1-\cos \alpha_j}}.
$$
\end{small}
\end{lemma}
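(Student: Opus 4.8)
The plan is to compute $\cosh F_{i,j}(\overline{\alpha})$ directly from the definition by means of the hyperbolic angle-addition formula. Setting $u = \cos(\alpha_{12}/2)/\tan(\alpha_i/2)$ and $v = \cos(\alpha_{12}/2)/\tan(\alpha_j/2)$, so that $F_{i,j}(\overline{\alpha}) = \arcsinh(u) + \arcsinh(v)$, I would apply $\cosh(A+B) = \cosh A \cosh B + \sinh A \sinh B$ together with the identities $\sinh(\arcsinh x) = x$ and $\cosh(\arcsinh x) = \sqrt{1+x^2}$. This immediately yields
$$
\cosh F_{i,j}(\overline{\alpha}) = \sqrt{1+u^2}\,\sqrt{1+v^2} + uv.
$$

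The next step is to expand each factor. Writing $\tan(\alpha_i/2) = \sin(\alpha_i/2)/\cos(\alpha_i/2)$ gives $u = \cos(\alpha_{12}/2)\cos(\alpha_i/2)/\sin(\alpha_i/2)$, hence
$$
1 + u^2 = \frac{\cos^2(\alpha_{12}/2)\cos^2(\alpha_i/2) + \sin^2(\alpha_i/2)}{\sin^2(\alpha_i/2)},
$$
and analogously for $1+v^2$ and for the product $uv$. Collecting the two contributions over the common denominator $\sin(\alpha_i/2)\sin(\alpha_j/2)$ produces exactly the claimed numerator: the cross term $uv$ supplies the first summand $\cos^2(\alpha_{12}/2)\cos(\alpha_i/2)\cos(\alpha_j/2)$, while the radical product $\sqrt{1+u^2}\sqrt{1+v^2}$ supplies the second.

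Finally I would rewrite the denominator using the half-angle identity $1 - \cos\alpha = 2\sin^2(\alpha/2)$, which for $\alpha_i,\alpha_j \in (0,\pi)$ gives $\sqrt{1-\cos\alpha_i}\,\sqrt{1-\cos\alpha_j} = 2\sin(\alpha_i/2)\sin(\alpha_j/2)$. Substituting $\sin(\alpha_i/2)\sin(\alpha_j/2) = \tfrac{1}{2}\sqrt{1-\cos\alpha_i}\,\sqrt{1-\cos\alpha_j}$ and clearing the resulting factor of $2$ into the numerator gives precisely the stated formula.

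There is no genuine obstacle here, since the assertion is an algebraic identity; the only point requiring care is the choice of sign for the square roots. This is settled by observing that in an acute-angled polytope all the half-angles $\alpha_{12}/2$, $\alpha_i/2$, $\alpha_j/2$ lie in $(0,\pi/2)$, so every sine and cosine occurring is positive and each radical is taken with its positive branch. The computation is therefore unambiguous and yields the desired expression for $\cosh F_{i,j}(\overline{\alpha})$.
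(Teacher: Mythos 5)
Your proof is correct and follows essentially the same route as the paper's: both apply the addition formula $\cosh(\arcsinh x + \arcsinh y) = xy + \sqrt{1+x^2}\sqrt{1+y^2}$, expand the two terms over the common denominator $\sin(\alpha_i/2)\sin(\alpha_j/2)$, and convert the denominator via the half-angle identity $1-\cos\alpha = 2\sin^2(\alpha/2)$. Your explicit remark on the positivity of all the half-angle sines and cosines (so that each radical takes its positive branch) is a sound point that the paper leaves implicit.
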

\begin{proof}
Using the formula
$$
\cosh(\arcsinh x + \arcsinh y) = xy + \sqrt{1 + x^2}\sqrt{1+y^2},
$$
we obtain that
\begin{small}
\begin{gather*}
\cosh F_{i,j} (\overline{\alpha})  =
\cosh\left(\arcsinh \left(\frac{\cos\left(\frac{\alpha_{12}}{2}\right)}{\tan\left(\frac{\alpha_i}{2}\right)}\right) + \arcsinh \left(\frac{\cos\left(\frac{\alpha_{12}}{2}\right)}{\tan\left(\frac{\alpha_j}{2}\right)}\right)\right) = \\
  = \frac{\cos^2\left(\frac{\alpha_{12}}{2}\right)\cos \left(\frac{\alpha_i}{2}\right) \cos\left(\frac{\alpha_j}{2}\right) +
\sqrt{\cos^2\left(\frac{\alpha_{12}}{2}\right)\cos^2\left(\frac{\alpha_{i}}{2}\right)+\sin^2\left(\frac{\alpha_i}{2}\right)}
\sqrt{\cos^2\left(\frac{\alpha_{12}}{2}\right)\cos\left(\frac{\alpha_{j}}{2}\right)+\sin^2\left(\frac{\alpha_j}{2}\right)}}{\sin \left(\frac{\alpha_i}{2}\right) \sin\left(\frac{\alpha_j}{2}\right)} = \\
 = \frac{2\cos^2\left(\frac{\alpha_{12}}{2}\right)\cos \left(\frac{\alpha_i}{2}\right) \cos\left(\frac{\alpha_j}{2}\right) +
2\sqrt{\cos^2\left(\frac{\alpha_{12}}{2}\right)\cos^2\left(\frac{\alpha_{i}}{2}\right)+\sin^2\left(\frac{\alpha_i}{2}\right)}
\sqrt{\cos^2\left(\frac{\alpha_{12}}{2}\right)\cos^2\left(\frac{\alpha_{j}}{2}\right)+\sin^2\left(\frac{\alpha_j}{2}\right)}}
{\sqrt{1 - \cos \alpha_i} \sqrt{1-\cos \alpha_j}}.
\end{gather*}
\end{small}
Transforming the expressions above, we use the half-angle formulae where appropriate.
\end{proof}

\section{Proof of Theorem~\ref{th1}}
\label{section:th1}

\subsection{Explicit formula for $\ta$.}

Let $E$ be the outermost edge of a compact Coxeter polytope
$P$. Consider the set of unit outer normals  $(u_1, u_2, u_3,
u_4)$ to the facets $F_1$, $F_2$, $F_3$, $F_4$. Note that this vector system is
linearly independent. Its Gram matrix is
$$
G(u_1, u_2, u_3, u_4) =
\begin{pmatrix}
1 & -\cos \alpha_{12} & -\cos \alpha_{13} & -\cos \alpha_{14}\\
-\cos \alpha_{12} & 1 & -\cos \alpha_{23} & -\cos \alpha_{24}\\
-\cos \alpha_{13} & -\cos \alpha_{23} & 1 & -T\\
-\cos \alpha_{14} & -\cos \alpha_{24} & -T & 1
\end{pmatrix},
$$
where $T=|(u_3, u_4)|=\cosh \rho(F_3, F_4)$ is the width of $E$ in the case where the facets $F_3$ and
$F_4$ diverge. Recall that otherwise $T \leqslant  1$, and we do not need to consider this case separately.
Let us denote by $G_{ij}$
the algebraic complements
of the elements of the matrix $G=G(u_1, u_2, u_3, u_4)$.

We denote by $F(\overline \alpha)$ the corresponding
$F_{i,j}(\overline \alpha)$, depending on $h_J \leqslant h_I$ or $h_I \leqslant h_J$ (see Theorem~\ref{th:lenedge}).

\begin{proposition}\label{th:w}
The small ridge $\Sigma_E$ associated with the edge $E$ of the compact Coxeter polytope $P \subset \HH^3$
has width $T$ less than
$$
\ta = \frac{\cosh F(\overline \alpha) \cdot \sqrt{G_{33} G_{44}} - g(\overline \alpha)}{\sin^2 \alpha_{12}},
$$
where
$$
g(\overline \alpha) := \cos \alpha_{12} \cos \alpha_{13} \cos \alpha_{24}
+ \cos \alpha_{12} \cos \alpha_{14} \cos \alpha_{23} + \cos \alpha_{13}
\cos \alpha_{14} + \cos \alpha_{23} \cos \alpha_{24}.
$$
\end{proposition}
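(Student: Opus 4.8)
The plan is to relate the width $T$, which sits in the $(3,4)$ and $(4,3)$ entries of the Gram matrix $G = G(u_1,u_2,u_3,u_4)$, directly to the length $a$ of the outermost edge $E$, and then to substitute the bound on $\cosh a$ supplied by Corollary~\ref{corol:leng}. The geometric input is that $E = F_1 \cap F_2$ has endpoints $V_1 = F_1\cap F_2\cap F_3$ and $V_2 = F_1\cap F_2\cap F_4$, so that $a = \rho(V_1,V_2)$ and $\cosh a = -(V_1,V_2)$.

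First I would pass to the dual basis $u^1,\dots,u^4$ determined by $(u^i,u_j)=\delta^i_j$, whose Gram matrix is $G^{-1}$, so that $(u^i,u^j) = G_{ij}/\det G$. Since $V_1$ is orthogonal to $u_1,u_2,u_3$ it is proportional to $u^4$, and likewise $V_2$ is proportional to $u^3$. The minors $G_{33}$ and $G_{44}$ are exactly the Gram determinants of the triples $(u_1,u_2,u_4)$ and $(u_1,u_2,u_3)$; each such triple spans a spacelike hyperplane (the corresponding vertex is a timelike direction orthogonal to it), so both are positive, while $\det G < 0$ because $(u_1,\dots,u_4)$ is a basis of $\R^{3,1}$ and $G$ thus has signature $(3,1)$. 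Normalizing $V_1,V_2$ to be future-pointing unit timelike vectors and substituting $(u^i,u^j)=G_{ij}/\det G$, a short computation yields the exact vertex-distance formula
$$
\cosh a = \frac{G_{34}}{\sqrt{G_{33}\,G_{44}}}.
$$
Keeping the signs straight (both $V_i$ future-pointing, $\det G<0$, $G_{33},G_{44}>0$) is where I expect the only genuine bookkeeping to lie; the correct choice is forced by $\cosh a \ge 1$, which in turn forces $G_{34}>0$.

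Next I would compute the cofactor $G_{34}=(-1)^{3+4}M_{34}$ explicitly. Deleting row $3$ and column $4$ and expanding the resulting $3\times 3$ determinant along the row that carries the entry $-T$, the coefficient of $T$ in the minor is
$$
-\det\begin{pmatrix} 1 & -\cos\alpha_{12}\\ -\cos\alpha_{12} & 1\end{pmatrix} = -\sin^2\alpha_{12},
$$
and collecting the remaining terms reproduces precisely $-g(\overline\alpha)$. After the sign $(-1)^{3+4}=-1$ this gives the clean linear relation
$$
G_{34} = g(\overline\alpha) + T\sin^2\alpha_{12},
$$
while $G_{33}$ and $G_{44}$ are independent of $T$.

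Finally I would combine the two displays: substituting $G_{34}=g(\overline\alpha)+T\sin^2\alpha_{12}$ into the distance formula and solving for $T$ gives
$$
T = \frac{\cosh a\cdot \sqrt{G_{33}\,G_{44}} - g(\overline\alpha)}{\sin^2\alpha_{12}},
$$
which is strictly increasing in $\cosh a$ because $\sqrt{G_{33}G_{44}}>0$ and $\sin^2\alpha_{12}>0$. Corollary~\ref{corol:leng} gives $\cosh a < \cosh F(\overline\alpha)$, so replacing $\cosh a$ by $\cosh F(\overline\alpha)$ in the monotone expression above yields $T < \ta$, as claimed. The one substantive step is the vertex-distance formula with its sign conventions; everything afterward is a single determinant expansion followed by a monotone substitution.
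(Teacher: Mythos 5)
Your proposal is correct and follows essentially the same route as the paper's proof: the dual-basis identification of the vertices $V_1, V_2$ with $u_3^*, u_4^*$, the vertex-distance formula $\cosh a = G_{34}/\sqrt{G_{33}G_{44}}$, the expansion $G_{34} = g(\overline\alpha) + T\sin^2\alpha_{12}$, and the substitution of the bound $\cosh a < \cosh F(\overline\alpha)$ from Corollary~\ref{corol:leng}. Your explicit sign bookkeeping ($\det G < 0$, $G_{33}, G_{44} > 0$) is a welcome addition that the paper leaves implicit, but it does not change the argument.
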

\begin{proof}
Let $(u^*_1, u^*_2, u^*_3, u^*_4)$ be the basis dual to the basis $(u_1, u_2, u_3, u_4)$. Then $u^*_3$ and $u^*_4$ determine the vertices $V_2$ and $V_1$ in the Lobachevsky space.
Indeed, the vector $v_1$ corresponding to the point
$V_1 \in \mathbb{H}^3$ is uniquely determined (up to scaling)  by the conditions
$(v_1, u_1)=(v_1, u_2)=(v_1, u_3)=0$. Note that
the vector $u_4^*$ satisfies the same conditions. Therefore, the vectors $v_1$ and
$u^*_4$ are proportional. Hence,
$$
\cosh a=\cosh \rho(V_1, V_2)=-(v_1, v_2)=-\frac{(u_3^*,u_4^*)}
{\sqrt{(u_3^*,u_3^*)(u_4^*,u_4^*)}}.
$$
It is known that $G(u^*_1, u^*_2, u^*_3, u^*_4)=G(u_1, u_2, u_3, u_4)^{-1}$,
whence it follows that $\cosh a$ can be expressed in terms of the algebraic complements
$G_{ij}$ (recall that $G_{ij}$ is computed with the sign $(-1)^{i+j}$) of the elements of the matrix $G=G(u_1, u_2, u_3, u_4)$:
$$
\cosh a=-\frac{(u_3^*,u_4^*)}{\sqrt{(u_3^*,u_3^*)(u_4^*,u_4^*)}} =
\frac{G_{34}}{\sqrt{G_{33} G_{44}}}.
$$
Now 
Theorem~\ref{th:lenedge} implies that $$\cosh
a < \cosh F(\overline \alpha).$$ It follows that
\begin{equation}
\label{eq5}
\frac{G_{34}}{\sqrt{G_{33} G_{44}}} < \cosh F(\overline \alpha).
\end{equation}
For every $\overline \alpha$,  we obtain in this way a linear inequality
with respect to the number $T$.
Indeed,
$$
G_{34} = T(1-\cos^2 \alpha_{12}) + g(\overline \alpha)
= T \cdot \sin^2 \alpha_{12} + g(\overline \alpha) < \cosh F(\overline \alpha) \cdot \sqrt{G_{33} G_{44}},
$$
which completes the proof.
\end{proof}

\subsection{Proof of  Theorem~\ref{th1}}
\label{subsection:th1}

In order to prove  Theorem~\ref{th1} it remains to show that
$$
\max_{\overline{\alpha} \in \Omega} \{\ta\} =
\mathbf{t}_{(\pi/5, \pi/3, \pi/3, \pi/2,
\pi/2)} < 5.75.
$$
Taking into account Lemma~\ref{l2.1},\,(i), 
we can see that only one or two angles $\alpha_{ij}$ can be equal to $\pi/k$, where $k \geqslant 6$. Moreover,
any triple of angles around one of the vertices of the edge $E$ contains $\pi/2$.

\textbf{The plan of the proof.}
Without loss of generality, we can consider separately the following
cases:
\begin{itemize}
    \item[(1)] $\alpha_{12} = \frac{\pi}{k}$, where $k \geqslant 6$. Due to Proposition \ref{prop:a12k}, $\ta < 3$.
    \item[(2)] $\alpha_{13} = \frac{\pi}{k}$, where $k \geqslant 6$. This implies by Lemma~\ref{l2.1},\,(i) that $\alpha_{12} = \pi/2$. By Proposition~\ref{prop:a122}, we have $\ta < 5$.
    \item[(3)] no $\alpha_{ij}$ is equal to $\pi/k$ for $k \geqslant 6$, i.e., $\alpha_{ij} = \pi/2, \pi/3, \pi/4, \pi/5$. This gives us $67$ different possibilities for a small ridge, and $43$ of them are combined by the fact that $\alpha_{12} = \pi/2$. In the latter case, we use Proposition \ref{prop:a122} again: $\ta < 5$.
    \item[(4)] It remains to calculate $\ta$ for $24$ different types of a small ridge with $\alpha_{12} \ne \pi/2$. This is done by 
    using the program {\bf \texttt{SmaRBA}} (\textbf{Sma}ll \textbf{R}idges, \textbf{B}ounds and \textbf{A}pplications, see \cite{SmaRBA}) 
        written in \texttt{Sage} computer algebra system. The result is presented as the list of Coxeter --- Vinberg diagrams in Table
    \ref{tab:ridges}.
\end{itemize}

In order to obtain upper bounds for $\ta$, we shall
use Proposition~\ref{th:w} and the estimate (see Corollary~\ref{corol:leng})
$$
a < F(\overline \alpha) = \max\{F_{1,2}(\overline \alpha), F_{3,4}(\overline \alpha)\}.
$$

\begin{remark}
Note that if one computes bounds for a large (but finite) number of ridges,
 then it can be much more efficient to verify in each case whether $h_J \leqslant h_I$ or $h_I \leqslant h_J$ and depending on that to pick $F_{1,2}(\overline \alpha)$ or $F_{3,4}(\overline \alpha)$.
\end{remark}

\begin{proposition}\label{prop:a12k}
Suppose that $\alpha_{12} = \frac{\pi}{k}$, where $k \geqslant 6$.
Then $\ta < 3$.
\end{proposition}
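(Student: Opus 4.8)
The plan is to use the explicit formula for $\ta$ from Proposition~\ref{th:w} together with the bound on $\cosh a$ from Corollary~\ref{corol:leng}, and to show that when $\alpha_{12}=\pi/k$ with $k\ge 6$, the width estimate is dominated by a quantity visibly below $3$. First I would exploit the constraint imposed by Lemma~\ref{l2.1},(i): since $\alpha_{12}+\alpha_{23}+\alpha_{13}>\pi$ and $\alpha_{12}+\alpha_{24}+\alpha_{14}>\pi$, a very small $\alpha_{12}$ (close to $\pi/6$ or less) forces the remaining angles $\alpha_{13},\alpha_{23},\alpha_{14},\alpha_{24}$ to be large — indeed all at least $\pi/2$ in the relevant range, so that the only feasible values for them are $\pi/2$ (and possibly $\pi/3$ in borderline subcases). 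This drastically restricts the finite set of admissible types $\overline{\alpha}$ that must be examined.

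Next I would insert these constraints into the formula
$$
\ta = \frac{\cosh F(\overline \alpha)\cdot\sqrt{G_{33}G_{44}}-g(\overline \alpha)}{\sin^2\alpha_{12}}.
$$
The key observation is that the denominator $\sin^2\alpha_{12}=\sin^2(\pi/k)$ is \emph{small} for $k\ge 6$, which naively looks dangerous, but the numerator is simultaneously suppressed: with the framing and side angles forced to be near $\pi/2$, both the cofactors $G_{33},G_{44}$ and the term $\cosh F(\overline\alpha)$ stay controlled, while $g(\overline\alpha)$ (which collects products of cosines of the large angles) is small or even negative in a way that does not help growth. I would estimate $\sqrt{G_{33}G_{44}}$ via the explicit cofactor expansion of the Gram matrix $G(u_1,u_2,u_3,u_4)$, and bound $\cosh F(\overline\alpha)$ using Lemma~\ref{l2.2}, where the presence of $\cos^2(\alpha_{12}/2)$ as a common factor in the arcsinh arguments again works in our favor for small $\alpha_{12}$.

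The cleanest route is to treat $\ta$ as a function of the angles and observe that, after substituting the admissible discrete values dictated by Lemma~\ref{l2.1},(i), one is left with only finitely many candidate types. For each, I would check directly that the right-hand side of the displayed formula is below $3$; monotonicity in $k$ (the worst case being the smallest admissible $k$, i.e.\ $k=6$) lets me reduce to a handful of explicit evaluations. The main obstacle I anticipate is verifying that the small denominator $\sin^2(\pi/k)$ does not blow up the quotient: one must show carefully that the numerator $\cosh F(\overline\alpha)\sqrt{G_{33}G_{44}}-g(\overline\alpha)$ vanishes to at least the same order as $\sin^2\alpha_{12}$ as $\alpha_{12}\to 0$, so that the limit is finite and the bound $3$ is respected throughout the admissible range. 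This asymptotic cancellation — rather than any single algebraic identity — is where the real care is needed.
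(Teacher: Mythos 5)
Your overall strategy coincides with the paper's: use Lemma~\ref{l2.1},(i) to pin down the remaining angles and then feed the resulting type into the formula of Proposition~\ref{th:w}. But the proposal has a genuine gap, in two places. First, a small one: when $\alpha_{12}=\pi/k\le\pi/6$, the inequalities $\alpha_{12}+\alpha_{13}+\alpha_{23}>\pi$ and $\alpha_{12}+\alpha_{14}+\alpha_{24}>\pi$ force \emph{all} of $\alpha_{13},\alpha_{23},\alpha_{14},\alpha_{24}$ to equal $\pi/2$ exactly: if one of them were $\le\pi/3$, its partner would have to exceed $\pi/2$, which is impossible in an acute-angled polytope. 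Your hedge about ``$\pi/3$ in borderline subcases'' introduces cases that do not exist. Second, and more seriously, you never carry out the one computation on which the proposition rests, and the device you propose for finishing --- monotonicity in $k$ with ``the worst case being the smallest admissible $k$, i.e.\ $k=6$,'' followed by a handful of evaluations --- fails because the monotonicity runs the other way. With $\overline\alpha=(\pi/k,\pi/2,\pi/2,\pi/2,\pi/2)$ one gets exactly $\ta=1+2\cos^2\left(\frac{\pi}{2k}\right)$, which is \emph{increasing} in $k$ and tends to $3$ as $k\to\infty$; the value at $k=6$ (about $2.87$) bounds nothing for larger $k$, and since there are infinitely many admissible $k$, no finite list of evaluations can close the argument. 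What is needed is a bound uniform in $k$, and only the closed-form expression provides it.

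That closed form is also where your anticipated ``asymptotic cancellation'' worry dissolves: nothing asymptotic is involved, because the cancellation is exact. With all other angles equal to $\pi/2$ the Gram matrix $G(u_1,u_2,u_3,u_4)$ is block diagonal, so $G_{33}=G_{44}=1-\cos^2\alpha_{12}=\sin^2\left(\frac{\pi}{k}\right)$, and $g(\overline\alpha)=0$ since every term of $g$ contains a factor $\cos\left(\frac{\pi}{2}\right)$. Moreover Lemma~\ref{l2.1},(ii) gives $\alpha_1=\alpha_2=\alpha_3=\alpha_4=\pi/2$, whence
$$
\cosh F(\overline\alpha)=\cosh\left(2\arcsinh\left(\cos\left(\tfrac{\pi}{2k}\right)\right)\right)=1+2\cos^2\left(\tfrac{\pi}{2k}\right),
$$
and therefore
$$
\ta=\frac{\left(1+2\cos^2\left(\tfrac{\pi}{2k}\right)\right)\sin^2\left(\tfrac{\pi}{k}\right)-0}{\sin^2\left(\tfrac{\pi}{k}\right)}=1+2\cos^2\left(\tfrac{\pi}{2k}\right)<3
$$
for every $k\ge 6$: the factor $\sin^2\left(\frac{\pi}{k}\right)$ cancels identically and the bound is uniform in $k$. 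Until this computation (or an equivalent uniform-in-$k$ estimate) is supplied, your proposal remains a plan whose decisive step is still open.
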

\begin{proof}
Due to Lemma~\ref{l2.1},\,(i), we can see that all other $\alpha_{ij} = \pi/2$ and
$\overline \alpha = \left(\frac{\pi}{k}, \frac{\pi}{2}, \frac{\pi}{2}, \frac{\pi}{2}, \frac{\pi}{2}\right).$
In this case we have that $F(\overline \alpha) = F_{1,2}(\overline \alpha) = F_{3,4}(\overline \alpha)$
and
\begin{gather*}
    g\left(\frac{\pi}{k}, \frac{\pi}{2}, \frac{\pi}{2}, \frac{\pi}{2}, \frac{\pi}{2}\right)  = 0,\qquad
    \sqrt{G_{33} G_{44}}  = \sin^2 \left(\frac{\pi}{k}\right).
\end{gather*}
We have
$$
\cosh F\left(\frac{\pi}{k}, \frac{\pi}{2}, \frac{\pi}{2}, \frac{\pi}{2}, \frac{\pi}{2}\right) = \cosh\left(2 \arcsinh\left(\frac{\cos (\pi/2k)}{\tan(\pi/4)}\right)\right) =
2\cos^2 \left(\frac{\pi}{2k}\right) + 1.
$$
This implies that
$$
\ta =
\frac{(2\cos^2 \left(\frac{\pi}{2k}\right) + 1) \sin^2 \left(\frac{\pi}{k}\right)}{\sin^2 \left(\frac{\pi}{k}\right)} = 2\cos^2 \left(\frac{\pi}{2k}\right) + 1 < 3.
$$
\end{proof}

Now we can assume that $\alpha_{12} \geqslant \pi/5$. Only one or two angles among remaining $\alpha_{ij}$ can be equal to $\pi/k$, where $k \geqslant 6$.
Without loss of generality, we suppose
that
$\alpha_{13} = \frac{\pi}{k}$, where $k \geqslant 6$. Then $\alpha_{12} = \alpha_{23} = \pi/2$.

If no $\alpha_{ij}$ is equal to $\pi/k$ for $k \geqslant 6$, then
these angles can equal only
$\pi/2$, $\pi/3$, $\pi/4$, and $\pi/5$.
Recall that any triple of angles around one of the vertices of the edge $E$ contains $\pi/2$.

Thus, we can consider separately the case
$\alpha_{12} = \pi/2$.

\begin{proposition}
\label{prop:a122}
If $\alpha_{12} = \pi/2$, then $\ta < 5$.
\end{proposition}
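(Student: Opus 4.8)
The plan is to substitute $\alpha_{12}=\pi/2$ directly into the formula of Proposition~\ref{th:w} and exploit the resulting simplifications. Since $\cos\alpha_{12}=0$ and $\sin^2\alpha_{12}=1$, the denominator disappears and
$$
\ta = \cosh F(\overline\alpha)\cdot\sqrt{G_{33}G_{44}} - g(\overline\alpha),
$$
where now $g(\overline\alpha)=\cos\alpha_{13}\cos\alpha_{14}+\cos\alpha_{23}\cos\alpha_{24}\ge 0$ (all dihedral angles being $\le\pi/2$), and a short cofactor computation gives $G_{44}=1-\cos^2\alpha_{13}-\cos^2\alpha_{23}$ and $G_{33}=1-\cos^2\alpha_{14}-\cos^2\alpha_{24}$. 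Because $g\ge 0$, it suffices to prove the sharper bound $\cosh F(\overline\alpha)\sqrt{G_{33}G_{44}}<4$.

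The key step is a factorization of the cofactors. Using Lemma~\ref{l2.1},(ii) at $\alpha_{12}=\pi/2$ one has $\cos\alpha_3=\cos\alpha_{13}/\sin\alpha_{23}$, whence $\sin^2\alpha_3=(\sin^2\alpha_{23}-\cos^2\alpha_{13})/\sin^2\alpha_{23}=G_{44}/\sin^2\alpha_{23}$; likewise $\cos\alpha_1=\cos\alpha_{23}/\sin\alpha_{13}$ gives $G_{44}=\sin^2\alpha_{13}\sin^2\alpha_1$. Thus $\sqrt{G_{44}}=\sin\alpha_{23}\sin\alpha_3=\sin\alpha_{13}\sin\alpha_1$, and symmetrically $\sqrt{G_{33}}=\sin\alpha_{24}\sin\alpha_4=\sin\alpha_{14}\sin\alpha_2$. (The inequalities $\alpha_{13}+\alpha_{23}>\pi/2$ and $\alpha_{14}+\alpha_{24}>\pi/2$ from Lemma~\ref{l2.1},(i) guarantee $G_{33},G_{44}>0$, i.e.\ $\cos\alpha_i<1$.) Having both forms of each square root is what will let me treat the two admissible choices of $F$ uniformly.

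I would then evaluate $\cosh F_{i,j}(\overline\alpha)$ from Lemma~\ref{l2.2} at $\alpha_{12}=\pi/2$, where $\cos^2(\alpha_{12}/2)=1/2$ collapses it to
$$
\cosh F_{i,j}(\overline\alpha)=\frac{\sqrt{(1+c_i)(1+c_j)}+\sqrt{(3-c_i)(3-c_j)}}{2\sqrt{(1-c_i)(1-c_j)}},\qquad c_i:=\cos\alpha_i.
$$
Multiplying by the factorized cofactors, the factor $\sqrt{(1-c_i)(1-c_j)}$ in the denominator cancels against part of $\sin\alpha_i\sin\alpha_j=\sqrt{(1-c_i)(1+c_i)(1-c_j)(1+c_j)}$, leaving
$$
\cosh F_{3,4}(\overline\alpha)\sqrt{G_{33}G_{44}}=\sin\alpha_{23}\sin\alpha_{24}\cdot\frac{(1+c_3)(1+c_4)+\sqrt{(1+c_3)(3-c_3)(1+c_4)(3-c_4)}}{2},
$$
and the analogous expression built from $\alpha_1,\alpha_2,\alpha_{13},\alpha_{14}$ for $F_{1,2}$. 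The bracketed function is increasing in each $c_i$ on $[0,1)$, so it is bounded above by its value at $c_3=c_4=1$, namely $(4+4)/2=4$; combined with $\sin\alpha_{23}\sin\alpha_{24}\le 1$ this yields $\cosh F_{3,4}\sqrt{G_{33}G_{44}}<4$, and identically for $F_{1,2}$. Since $F(\overline\alpha)=\max\{F_{1,2},F_{3,4}\}$ and $\cosh$ is increasing, both cases give $\cosh F(\overline\alpha)\sqrt{G_{33}G_{44}}<4$, hence $\ta<4-g(\overline\alpha)\le 4<5$.

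The main obstacle is spotting the cofactor factorization $\sqrt{G_{44}}=\sin\alpha_{23}\sin\alpha_3=\sin\alpha_{13}\sin\alpha_1$: without it the product $\cosh F_{i,j}\sqrt{G_{33}G_{44}}$ remains a four-variable expression in the $\alpha_{ij}$ that resists a clean estimate, whereas the factorization reduces the bound to the elementary monotone inequality above and, through the two equivalent forms of each square root, simultaneously disposes of both possible values of $F$.
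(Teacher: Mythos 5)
Your proof is correct, and it reproduces the paper's skeleton up to the last step: substitute $\alpha_{12}=\pi/2$ into Proposition~\ref{th:w}, discard $g(\overline\alpha)\ge 0$, express the plane angles via Lemma~\ref{l2.1}(ii), take $\cosh F_{i,j}$ from Lemma~\ref{l2.2}, and cancel the denominators $\sqrt{1-\cos\alpha_i}$ against the cofactors. Your factorization $\sqrt{G_{44}}=\sin\alpha_{13}\sin\alpha_1=\sin\alpha_{23}\sin\alpha_3$ is algebraically the same cancellation the paper performs by writing $1-\cos\alpha_1=(\sin\alpha_{13}-\cos\alpha_{23})/\sin\alpha_{13}$ and factoring $\sin^2\alpha_{13}-\cos^2\alpha_{23}$ as a difference of squares, though your form makes the symmetric treatment of $F_{1,2}$ and $F_{3,4}$ cleaner. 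Where you genuinely depart is the endgame. The paper estimates the two summands separately, using $\cos(\alpha_1/2)\cos(\alpha_2/2)\le 1$, $\sqrt{1+\sin^2(\alpha_1/2)}\,\sqrt{1+\sin^2(\alpha_2/2)}\le 3/2$ (from $\alpha_i\le\pi/2$), and $\sin\alpha_{13}+\cos\alpha_{23}\le 2$, arriving at $2+3=5$; these termwise bounds are attained at incompatible extremes ($\alpha_i=0$ for the first factor, $\alpha_i=\pi/2$ for the second), so the constant $5$ is not tight. You instead keep the product $\sin\alpha_{23}\sin\alpha_{24}\cdot\tfrac{1}{2}\bigl[(1+c_3)(1+c_4)+\sqrt{(1+c_3)(3-c_3)(1+c_4)(3-c_4)}\bigr]$ intact and use that each summand is increasing in $c_i$ on the admissible range, bounding the bracket by its value $4$ at $c_3=c_4=1$. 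This buys a strictly sharper conclusion $\ta<4$, which is in fact asymptotically optimal: for $\alpha_{12}=\alpha_{23}=\alpha_{24}=\pi/2$ and $\alpha_{13}=\alpha_{14}=\pi/k$ one has $\cosh F(\overline\alpha)\sqrt{G_{33}G_{44}}\to 4$ as $k\to\infty$ (exactly the family covered by case (2) of the paper's plan). The paper's version is marginally more elementary (no monotonicity analysis, only pointwise bounds), and the constant $5$ suffices for Theorem~A since the global maximum $5.75$ comes from a ridge with $\alpha_{12}\ne\pi/2$; your version gives the better constant at essentially no extra cost.
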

\begin{proof}
We have $\overline{\alpha} = \left(\frac{\pi}{2}, \alpha_{13}, \alpha_{14}, \alpha_{23}, \alpha_{24}\right)$.
Let us now compute:
\begin{equation}
\sqrt{G_{33} G_{44}} = \sqrt{1-\cos^2 \alpha_{13} -\cos^2 \alpha_{23}} \sqrt{1-\cos^2 \alpha_{14} -\cos^2 \alpha_{24}}.
\end{equation}

Notice that (by Lemma~\ref{l2.1})
$$
\begin{aligned}
\cos \alpha_1&=\frac{\cos \alpha_{23}}
{\sin \alpha_{13}}, &\qquad
\cos \alpha_2&=\frac{\cos \alpha_{24}}
{\sin \alpha_{14}},
\\
\cos \alpha_3&=\frac{\cos \alpha_{13}}
{\sin \alpha_{23}}, &\qquad
\cos \alpha_4&=\frac{\cos \alpha_{14}}
{\sin \alpha_{24}}.
\end{aligned}
$$

Using the above expressions and Lemma~\ref{l2.2},
we have
\begin{small}
\begin{gather*}
\ta  \le
\cosh F_{1,2} (\overline{\alpha})
\sqrt{\sin^2 \alpha_{13} -\cos^2 \alpha_{23}} \sqrt{\sin^2 \alpha_{14} -\cos^2 \alpha_{24}} \\
 \leqslant \frac{\cos \left(\frac{\alpha_1}{2}\right) \cos\left(\frac{\alpha_2}{2}\right)}
{\sqrt{1 - \cos \alpha_1} \sqrt{1-\cos \alpha_2}}
\sqrt{\sin^2 \alpha_{13} -\cos^2 \alpha_{23}} \sqrt{\sin^2 \alpha_{14} -\cos^2 \alpha_{24}} \ + \\
 + \frac{
\sqrt{1+\sin^2\left(\frac{\alpha_1}{2}\right)}
\sqrt{1+\sin^2\left(\frac{\alpha_2}{2}\right)}}
{\sqrt{1 - \cos \alpha_1} \sqrt{1-\cos \alpha_2}}
\sqrt{\sin^2 \alpha_{13} -\cos^2 \alpha_{23}} \sqrt{\sin^2 \alpha_{14} -\cos^2 \alpha_{24}} \leqslant \\
 \leqslant \frac{\cos \left(\frac{\alpha_1}{2}\right) \cos\left(\frac{\alpha_2}{2}\right)\sqrt{\sin \alpha_{13} \sin \alpha_{14}}}
{\sqrt{\sin \alpha_{13} -\cos \alpha_{23}} \sqrt{\sin \alpha_{14} -\cos \alpha_{24}}}
\sqrt{\sin^2 \alpha_{13} -\cos^2 \alpha_{23}} \sqrt{\sin^2 \alpha_{14} -\cos^2 \alpha_{24}} \ + \\
 +
\frac{\sqrt{1+\sin^2\left(\frac{\alpha_1}{2}\right)}
\sqrt{1+\sin^2\left(\frac{\alpha_2}{2}\right)}\sqrt{\sin \alpha_{13} \sin \alpha_{14}}}
{\sqrt{\sin \alpha_{13} -\cos \alpha_{23}} \sqrt{\sin \alpha_{14} -\cos \alpha_{24}}}
\sqrt{\sin^2 \alpha_{13} -\cos^2 \alpha_{23}} \sqrt{\sin^2 \alpha_{14} -\cos^2 \alpha_{24}} \leqslant \\
 \leqslant \cos \left(\frac{\alpha_1}{2}\right) \cos\left(\frac{\alpha_2}{2}\right)
\sqrt{\sin \alpha_{13}+\cos \alpha_{23}}
\sqrt{\sin \alpha_{14}+\cos \alpha_{24}} \,\, + \\
 + \sqrt{1+\sin^2\left(\frac{\alpha_1}{2}\right)}
\sqrt{1+\sin^2\left(\frac{\alpha_2}{2}\right)}
\sqrt{\sin \alpha_{13}+\cos \alpha_{23}}
\sqrt{\sin \alpha_{14}+\cos \alpha_{24}} <\\
< \sqrt{2} \sqrt{2} + \frac{\sqrt{3}}{\sqrt{2}}\frac{\sqrt{3}}{\sqrt{2}}\sqrt{2}\sqrt{2} = 5.
\end{gather*}
\end{small}
The last inequality holds, since $\alpha_i \leqslant \pi/2$, and, therefore, $\sin(\alpha_i/2) \leqslant \sin(\pi/4) = 1/\sqrt{2}$.
Absolutely the same argument works for $\ta$ bounded via $\cosh F_{3,4} (\overline{\alpha})$.
\end{proof}

\begin{table}

\begin{tikzpicture}
\draw[thick][fill=black]
(0,0)   circle [radius=.1] 

(2,0)   circle [radius=.1]  

(1,1)   circle [radius=.1]  

(1,-1)   circle [radius=.1] 
node [below] {$\ta < 2.5$}
;
\draw

     (1, 0.9)  -- (1,-0.9) ;  

 \draw[thick, dashed] (0.1, 0)  -- (1.9,0)
;
\end{tikzpicture}
\quad
\begin{tikzpicture}
\draw[thick][fill=black]
(0,0)   circle [radius=.1]

(2,0)   circle [radius=.1]

(1,1)   circle [radius=.1]

(1,-1)   circle [radius=.1]
node [below] {$\ta < 2.71$};
\draw
    (1.08, 1)  -- (1.08,-1)
     (0.92, 1)  -- (0.92,-1) ;

 \draw[thick, dashed]  (0.1, 0)  -- (1.9,0)
;
\end{tikzpicture}
\quad
\begin{tikzpicture}
\draw[thick][fill=black]
(0,0)   circle [radius=.1]

(2,0)   circle [radius=.1]

(1,1)   circle [radius=.1]

(1,-1)   circle [radius=.1]
node [below] {$\mathbf{t_{\overline{\alpha}}} < 2.87$};
\draw

   (1.09, 0.95)  -- (1.09,-0.95)
     (0.91, 0.95)  -- (0.91,-0.95)
     (1, 1) -- (1, -1)
     ;

 \draw[thick, dashed]  (0.1, 0)  -- (1.9,0)
;
\end{tikzpicture}
\quad
\begin{tikzpicture}
\draw[thick][fill=black]
(0,0)   circle [radius=.1]

(2,0)   circle [radius=.1]

(1,1)   circle [radius=.1]

(1,-1)   circle [radius=.1]
node [below]
{$\mathbf{t}_{\overline{\alpha}}<3.29$};
\draw
    (1.93, 0.07)  -- (1.07,0.93)

    (1, -1) --  (1,1)

     ;

 \draw[thick, dashed]  (0.1, 0)  -- (1.9,0)
;
\end{tikzpicture}
\quad
\begin{tikzpicture}
\draw[thick][fill=black]
(0,0)   circle [radius=.1]

(2,0)   circle [radius=.1]

(1,1)   circle [radius=.1]

(1,-1)   circle [radius=.1]
node [below]
{$\mathbf{t}_{\overline{\alpha}}<3.58$};
\draw
    (1.9, 0)  -- (1,0.9)
    (1.99, 0.09) --  (1.09,1)

    (1, -1) --  (1,1)
    ;

 \draw[thick, dashed]  (0.1, 0)  -- (1.9,0)
;
\end{tikzpicture}
\quad
\begin{tikzpicture}
\draw[thick][fill=black]
(0,0)   circle [radius=.1]
(2,0)   circle [radius=.1]
(1,1)   circle [radius=.1]
(1,-1)   circle [radius=.1]
node [below]
{$\mathbf{t}_{\overline{\alpha}} < 3.72$};
\draw
   (1, -1) --  (1,1)

    (1, 1) --  (2,0)
    (1.9, 0)  -- (1, 0.9)
    (2, 0.1) --  (1.08,1.03)
    ;
 \draw[thick, dashed]  (0.1, 0)  -- (1.9,0)
;
\end{tikzpicture}
\vskip 0.7cm

\quad
\begin{tikzpicture}
\draw[fill=black]
(0,0)   circle [radius=.1]

(2,0)   circle [radius=.1]

(1,1)   circle [radius=.1]

(1,-1)   circle [radius=.1]
node [below] {$\ta < 4.07$};
\draw
     (1, -1) --  (1,1)

    (1.93, 0.07)  -- (1.07,0.93)

     (1,1) -- (0,0)
       ;

 \draw[thick, dashed]  (0.1, 0)  -- (1.9,0)
;
\end{tikzpicture}
\quad
\begin{tikzpicture}
\draw[fill=black]
(0,0)   circle [radius=.1]

(2,0)   circle [radius=.1]

(1,1)   circle [radius=.1]

(1,-1)   circle [radius=.1]
node [below] {$\ta < 3.26$} ;
\draw
   (1, -1) --  (1,1)

    (1.93, 0.07)  -- (1.07,0.93)

     (1,-1) -- (0,0) ;

 \draw[thick, dashed]  (0.1, 0)  -- (1.9,0)
;
\end{tikzpicture}
\quad
\begin{tikzpicture}
\draw[fill=black]
(0,0)   circle [radius=.1]

(2,0)   circle [radius=.1]

(1,1)   circle [radius=.1]

(1,-1)   circle [radius=.1]
node [below] {$\ta < 4.62$};
\draw

    (1, -1) --  (1,1)

    (1.9, 0)  -- (1,0.9)
    (1.99, 0.09) --  (1.09,1)

      (0,0.09) -- (0.91,1)
    (0.09,0) -- (1, 0.9)
     ;

 \draw[thick, dashed]  (0.1, 0)  -- (1.9,0)
;
\end{tikzpicture}
\quad
\begin{tikzpicture}
\draw[fill=black]
(0,0)   circle [radius=.1]

(2,0)   circle [radius=.1]

(1,1)   circle [radius=.1]

(1,-1)   circle [radius=.1]
node [below] {$\ta < 3.08$};
\draw
    (1, -1)  -- (1,1)

    (1.9, 0)  -- (1,0.9)
    (1.99, 0.09) --  (1.09,1)

     (0,-0.09) -- (0.91,-1)
    (0.1,0) -- (1, -0.9) ;

 \draw[thick, dashed]  (0.1, 0)  -- (1.9,0)
;
\end{tikzpicture}
\quad
\begin{tikzpicture}
\draw[fill=black]
(0,0)   circle [radius=.1]

(2,0)   circle [radius=.1]

(1,1)   circle [radius=.1]

(1,-1)   circle [radius=.1]
node [below] {$\ta < 4.35$};
\draw

    (1, -1) --  (1,1)

    (1.93, 0.07)  -- (1.07,0.93)

      (0,0.09) -- (0.91,1)
    (0.09,0) -- (1, 0.9)
     ;

 \draw[thick, dashed]  (0.1, 0)  -- (1.9,0)
;
\end{tikzpicture}
\quad
\begin{tikzpicture}
\draw[fill=black]
(0,0)   circle [radius=.1]

(2,0)   circle [radius=.1]

(1,1)   circle [radius=.1]

(1,-1)   circle [radius=.1]
node [below] {$\ta < 3.51$};
\draw
    (1, -1)  -- (1,1)

    (1.93, 0.07)  -- (1.07,0.93)

     (0,-0.09) -- (0.91,-1)
    (0.1,0) -- (1, -0.9) ;

 \draw[thick, dashed]  (0.1, 0)  -- (1.9,0)
;
\end{tikzpicture}
\vskip 0.7cm

\begin{tikzpicture}
\draw[thick][fill=black]
(0,0)   circle [radius=.1]
(2,0)   circle [radius=.1]
(1,1)   circle [radius=.1]
(1,-1)   circle [radius=.1]
node [below]
{$\mathbf{t}_{\overline{\alpha}} < 4.87$};
\draw
   (1, -1) --  (1,1)

    (1, 1) --  (2,0)
    (1.9, 0)  -- (1, 0.9)
    (2, 0.1) --  (1.08,1.03)

     (0, 0) --  (1,1)
     (0.09, 0)  -- (1,0.91)
      (0, 0.09)  -- (1,1.09)
    ;
 \draw[thick, dashed]  (0.1, 0)  -- (1.9,0)
;
\end{tikzpicture}
\quad
\begin{tikzpicture}
\draw[thick][fill=black]
(0,0)   circle [radius=.1]
(2,0)   circle [radius=.1]
(1,1)   circle [radius=.1]
(1,-1)   circle [radius=.1]
node [below]
{$\mathbf{t}_{\overline{\alpha}} < 2.78$};
\draw
   (1, -1) --  (1,1)

    (1, 1) --  (2,0)
    (1.9, 0)  -- (1, 0.9)
    (2, 0.1) --  (1.08,1.03)

     (0, 0) --  (1,-1)
     (0.1, 0)  -- (1,-0.9)
      (0, -0.1)  -- (1,-1.1)
    ;
 \draw[thick, dashed]  (0.1, 0)  -- (1.9,0)
;
\end{tikzpicture}
\quad
\begin{tikzpicture}
\draw[thick][fill=black]
(0,0)   circle [radius=.1]
(2,0)   circle [radius=.1]
(1,1)   circle [radius=.1]
(1,-1)   circle [radius=.1]
node [below]
{$\mathbf{t}_{\overline{\alpha}} < 4.48$};
\draw
   (1, -1) --  (1,1)

    (1, 1) --  (2,0)
    (1.9, 0)  -- (1, 0.9)
    (2, 0.1) --  (1.08,1.03)

     (0, 0) --  (1,1)
    ;
 \draw[thick, dashed]  (0.1, 0)  -- (1.9,0)
;
\end{tikzpicture}
\quad
\begin{tikzpicture}
\draw[thick][fill=black]
(0,0)   circle [radius=.1]
(2,0)   circle [radius=.1]
(1,1)   circle [radius=.1]
(1,-1)   circle [radius=.1]
node [below]
{$\mathbf{t}_{\overline{\alpha}} < 3.62$};
\draw
   (1, -1) --  (1,1)

    (1, 1) --  (2,0)

     (0, 0) --  (1,-1)
     (0.1, 0)  -- (1,-0.9)
      (0, -0.1)  -- (1,-1.1)
    ;
 \draw[thick, dashed]  (0.1, 0)  -- (1.9,0)
;
\end{tikzpicture}
\quad
\begin{tikzpicture}
\draw[thick][fill=black]
(0,0)   circle [radius=.1]
(2,0)   circle [radius=.1]
(1,1)   circle [radius=.1]
(1,-1)   circle [radius=.1]
node [below]
{$\mathbf{t}_{\overline{\alpha}} < 4.75$};
\draw
   (1, -1) --  (1,1)

    (1.9, 0)  -- (1,0.9)
    (1.99, 0.09) --  (1.09,1)

     (0, 0) --  (1,1)
     (0.09, 0)  -- (1,0.91)
      (0, 0.09)  -- (1,1.09)
    ;
 \draw[thick, dashed]  (0.1, 0)  -- (1.9,0)
;
\end{tikzpicture}
\quad\begin{tikzpicture}
\draw[thick][fill=black]
(0,0)   circle [radius=.1]
(2,0)   circle [radius=.1]
(1,1)   circle [radius=.1]
(1,-1)   circle [radius=.1]
node [below]
{$\mathbf{t}_{\overline{\alpha}} < 3.17$};
\draw
   (1, -1) --  (1,1)

    (1.9, 0)  -- (1,0.9)
    (1.99, 0.09) --  (1.09,1)

     (0, 0) --  (1,-1)
     (0.1, 0)  -- (1,-0.9)
      (0, -0.1)  -- (1,-1.1)
    ;
 \draw[thick, dashed]  (0.1, 0)  -- (1.9,0)
;
\end{tikzpicture}
\vskip 0.7cm

\quad
\begin{tikzpicture}
\draw[fill=black]
(0,0)   circle [radius=.1]

(2,0)   circle [radius=.1]

(1,1)   circle [radius=.1]

(1,-1)   circle [radius=.1]
node [below] {$\ta < 3.82$};
\draw
    (1.93, 0.07)  -- (1.07,0.93)

    (1.08, 1)  -- (1.08,-1)
     (0.92, 1)  -- (0.92,-1) ;

 \draw[thick, dashed]  (0.1, 0)  -- (1.9,0)
;
\end{tikzpicture} \quad
\begin{tikzpicture}
\draw[fill=black]
(0,0)   circle [radius=.1]

(2,0)   circle [radius=.1]

(1,1)   circle [radius=.1]

(1,-1)   circle [radius=.1]
node [below] {$\ta < 4.98$};
\draw
    (0, 0)  -- (1,1)

    (1.93, 0.07)  -- (1.07,0.93)

    (1.08, 1)  -- (1.08,-1)
     (0.92, 1)  -- (0.92,-1) ;

 \draw[thick, dashed]  (0.1, 0)  -- (1.9,0)
;
\end{tikzpicture}\quad
\begin{tikzpicture}
\draw[fill=black]
(0,0)   circle [radius=.1]

(2,0)   circle [radius=.1]

(1,1)   circle [radius=.1]

(1,-1)   circle [radius=.1]
node [below] {$\ta < 4.14$};
\draw
    (1.93, 0.07)  -- (1.07,0.93)

    (1, -1) --  (0,0)

    (1.08, 1)  -- (1.08,-1)
     (0.92, 1)  -- (0.92,-1) ;

 \draw[thick, dashed]  (0.1, 0)  -- (1.9,0)
;
\end{tikzpicture}
\quad
\begin{tikzpicture}
\draw[fill=black]
(0,0)   circle [radius=.1]

(2,0)   circle [radius=.1]

(1,1)   circle [radius=.1]

(1,-1)   circle [radius=.1]
node [below] {$\ta < 4.21$};
\draw
    (1.93, 0.07)  -- (1.07,0.93)

(1.09, 0.95)  -- (1.09,-0.95)
     (0.91, 0.95)  -- (0.91,-0.95)
     (1, 1) -- (1, -1) ;

 \draw[thick, dashed]  (0.1, 0)  -- (1.9,0)
;
\end{tikzpicture} \quad
\begin{tikzpicture}
\draw[fill=black]
(0,0)   circle [radius=.1]

(2,0)   circle [radius=.1]

(1,1)   circle [radius=.1]

(1,-1)   circle [radius=.1]
node [below] {$\ta < \mathbf{5.75}$};
\draw
    (0, 0)  -- (1,1)

    (1.93, 0.07)  -- (1.07,0.93)

(1.09, 0.95)  -- (1.09,-0.95)
     (0.91, 0.95)  -- (0.91,-0.95)
     (1, 1) -- (1, -1) ;

 \draw[thick, dashed]  (0.1, 0)  -- (1.9,0)
;
\end{tikzpicture}\quad
\begin{tikzpicture}
\draw[fill=black]
(0,0)   circle [radius=.1]

(2,0)   circle [radius=.1]

(1,1)   circle [radius=.1]

(1,-1)   circle [radius=.1]
node [below] {$\ta < 4.9$};
\draw
    (1.93, 0.07)  -- (1.07,0.93)

    (1, -1) --  (0,0)

(1.09, 0.95)  -- (1.09,-0.95)
     (0.91, 0.95)  -- (0.91,-0.95)
     (1, 1) -- (1, -1) ;

 \draw[thick, dashed]  (0.1, 0)  -- (1.9,0)
;
\end{tikzpicture}
\vskip 1cm
\caption{Coxeter --- Vinberg diagrams of the remaining small ridges}
\label{tab:ridges}
\end{table}

\begin{figure}
\begin{center}
\begin{tikzpicture}
\draw[fill=black]
(0,0)   circle [radius=.1]  node [left] {$F_4$}

(2,0)   circle [radius=.1]  node [right] {$F_3$}

(1,1)   circle [radius=.1]  node [above] {$F_1$}

(1,-1)   circle [radius=.1] node [below] {$F_2$}

;
\draw

   (1.09, 0.95)  -- (1.09,-0.95)
     (0.91, 0.95)  -- (0.91,-0.95)
     (1, 1) -- (1, -1) ;

 \draw[thick, dashed] (0.1, 0)  -- (1.9,0)
;
\end{tikzpicture}
\vskip 0.07cm
$\mathbf{t_{\overline{\alpha}}} < 2.87$
\end{center}
\caption{Coxeter --- Vinberg diagram of a small ridge $(\pi/5, \pi/2, \pi/2, \pi/2, \pi/2)$.}
\label{fig:ridge-example}
\end{figure}
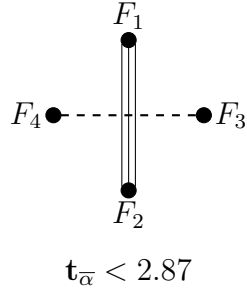

\vskip 0.5cm

After that, it remains to calculate $\ta$ for $24$ different types $\overline{\alpha}$ of small ridges with~$\alpha_{12}\ne\pi/2$.
This is done by using the program {\bf \texttt{SmaRBA}} \cite{SmaRBA} written in the computer algebra system \texttt{Sage}. The result is presented in Table \ref{tab:ridges} as the list of Coxeter --- Vinberg diagrams for 
the facets $F_1$, $F_2$, $F_3$, $F_4$. The facets $F_3$ and $F_4$ are connected by a dotted line, and the whole diagram is signed by the relevant bound: $\ta < \mathrm{constant}$.
The numbering of the facets of each diagram is the same as in Fig.~\ref{fig:ridge-example},
which shows an example of how the ridge diagram looks like when
$\overline \alpha = (\pi/5, \pi/2, \pi/2, \pi/2, \pi/2)$. We see from this picture that $\mathbf{t}_{(\pi/5, \pi/2, \pi/2, \pi/2, \pi/2)} < 2.87$.

The numbers given in Table~\ref{tab:ridges} are calculated by {\bf \texttt{SmaRBA}} \cite{SmaRBA} with accuracy up to eight decimal places. We show them rounded up to the nearest hundredth, which is quite enough for our purposes. For example, the maximal found number approximately equals
$5.74850431686$, which is rounded up to $5.75$.

Thus, combining Propositions \ref{prop:a12k}, \ref{prop:a122}, and the list in Table~\ref{tab:ridges}, we obtain
the proof of  Theorem~\ref{th1}. \qed

\subsection{Proof of Corollary~\ref{corol:th1}}
\label{subsect:corolth1}
Let $P$ be a compact Coxeter polytope in $\HH^{n \geqslant 4}$. Suppose that $P'$ is a $3$-dimensional face of $P$ that
 is itself a Coxeter polytope. Let
$O$ be an interior point of~$P'$ and let $E \in P'$ be the outermost edge from this point.

Then $P'$ has ($2$-dimensional) facets $F_1$ and $F_2$, framing the edge $E$, and, by  Theorem~\ref{th1},
$\cosh \rho(F_1, F_2) \leqslant \ta < 5.75$. Recall that the compact hyperbolic Coxeter polytope $P$ is simple. This implies that
$F_1$ and $F_2$ belong to the facets $P_1$ and
$P_2$ of $P$, respectively, where $P_1$ and $P_2$
are also the framing facets for the edge $E$.
Then we have
$$
\cosh \rho(P_1, P_2) \leqslant \cosh \rho(F_1, F_2) \leqslant \ta < 5.75.
$$
\qed

\section{Proof of  Theorem \ref{th2}}
\label{section:th2}


The distance from the point $e_0 \in \HH^n$, where $(e_0, e_0) = -1$,
to the plane
$$H_{u_1, \ldots, u_k} := \{x \in \HH^n \mid x \in \langle u_1,
\ldots, u_k \rangle^{\perp}, \ (u_j, u_j) = 1, \ 1\leqslant j \leqslant k\}$$
can be calculated by the formula
\begin{equation}\label{eq:rasst}
\sinh^2 \rho(e_0,  H_{u_1, \ldots, u_k}) =\sum_{i, j}\overline{g_{ij}} y_i y_j,
\end{equation}
where $\overline{g_{ij}}$ are the elements of the inverse matrix
$G^{-1} = G(u_1, \ldots, u_k)^{-1}$, and
$$y_j = - (e_0, u_j) = -\sinh \rho (e_0, H_j), \quad H_j := H_{u_j} = \{x \mid (x,u_j) = 0\}$$ for all $1 \leqslant j \leqslant k$ (we assume that $(e_0, u_j) \leqslant 0$, i.e., $e_0 \in H^-_{u_j}$).

Let $P$ be a compact Coxeter polytope in $\HH^n$ whose small ridge $\Sigma_E$
(associated with the outermost edge
$E$ from some interior point $O \in P$ given by the vector
$e_0 \in \R^{n,1}$ such that $(e_0, e_0) = -1$) is
right-angled. Let $F_1, \ldots, F_{n-1}$ be the
facets of $P$ containing $E$ with unit outer normals
$u_1, \ldots, u_{n-1}$, and let $u_n$ and $u_{n+1}$
be
the unit outer normals to the framing
facets $F_n$ and $F_{n+1}$
containing the vertices of $E$ but not $E$ itself.

Let us consider the following Gram matrix
$$
G(e_0, u_1, u_2, \ldots, u_{n+1}) =
\begin{pmatrix}
-1 & -y_1 & \ldots & -y_n & -y_{n+1}\\
-y_1 & 1 & 0 & 0 & 0\\
\vdots & \vdots & \ddots & \vdots & \vdots\\
-y_n & 0  & 0 & 1 & -T\\
-y_{n+1} & 0 & 0 & -T & 1
\end{pmatrix},
$$
where
$$
y_j = -(e_0, u_j) = - \sinh \rho (e_0, H_j), \quad H_j = \{x \mid (x,u_j) = 0\}.
$$

We can assume that $y_n \leqslant y_{n+1}$. The fact that the edge $E$ is the outermost edge from the point $O$ gives us the inequalities
$\rho(O, E) \geqslant \rho(O, E')$ for any edge $E'$ adjacent to $E$. Recall that $E' \in H_n$ or $E' \in H_{n+1}$. Assume that $E' \in H_n$ and $E' \not\in H_j$ for some $j \leqslant n-1$. By (\ref{eq:rasst}), the distances from the point $O$ to edges $E$ and $E'$ of $P$ satisfy the following:
$$
\sinh^2 \rho (O, E) = y_1^2 + \ldots + y_{n-1}^2, \quad
\sinh^2 \rho (O, E') = y_1^2 + \ldots + y_{j-1}^2 + y_{j+1}^2 + \ldots + y_{n-1}^2 + y_n^2.
$$
Applying the above consideration to every edge $E'$ adjacent to $E$, we obtain that
\begin{equation}\label{right-ineq}
y_n \leqslant y_{n+1} \leqslant y_1, y_2, \ldots, y_{n-1}, \qquad (n-1) y^2_{n+1} \leqslant y_1^2 + \ldots + y_{n-1}^2.    
\end{equation}
Since $n+2$ vectors
$e_0, u_1, \ldots, u_{n+1}$
belong to the $(n+1)$-dimensional vector space $\R^{n,1}$,
then
$$
\det G(e_0, u_1, \ldots, u_{n+1}) = (y_1^2 + \ldots + y_{n-1}^2 + 1) T^2 - 2 y_n y_{n+1} T
- (y_1^2 + \ldots +
y_{n+1}^2 + 1) = 0,
$$
i.e.,
$$
T = \dfrac{y_n y_{n+1} + \sqrt{y_n^2 y_{n+1}^2 + AB}}{A},
$$
where
$$
A := y_1^2 + \ldots + y_{n-1}^2 + 1,\quad
B := y_1^2 + \ldots + y_{n+1}^2 + 1.
$$
Therefore (from (\ref{right-ineq})),
$$
2 y_n y_{n+1} \leqslant y_n^2 + y_{n+1}^2 \le
2 y_{n+1}^2 < \frac{2A}{n-1}, \qquad \frac{B}{A} = 1 + \frac{y_n^2 + y^2_{n+1}}{A} < 1 + \frac{2}{n-1} = \frac{n+1}{n-1}
$$
and
$$
T = \frac{y_n y_{n+1}}{A} + \sqrt{\left(\frac{y_n y_{n+1}}{A}\right)^2 + \frac{B}{A}} < \frac{1}{n-1} + \sqrt{\frac{1}{(n-1)^2} + \frac{n+1}{n-1}} = \frac{n+1}{n-1}.
$$
\qed

\section{Arithmetic hyperbolic reflection groups and reflective Lorentzian lattices}
\label{sec:arith}

\subsection{Definitions and preliminaries}

Suppose that $\mathbb{F}$ is a totally real algebraic number field with the ring of integers
$A = \mathcal{O}_{\mathbb{F}}$.

\begin{definition}
A free finitely generated $A$-module $L$ with an inner product of signature $(n,1)$
is said to be a Lorentzian lattice if,
 for each non-identity embedding $\sigma \colon \mathbb{F} \to \mathbb{R}$,
  the quadratic space $L \otimes_{\sigma(A)} \mathbb{R}$ is positive definite.
\end{definition}

Suppose that $L$ is a Lorentzian lattice.
It is naturally embedded in the $(n + 1)$-dimensional real  \emph{Minkowski space} $\R^{n,1} = L \otimes_{\id(A)} \R$. We take one of the connected components of the hyperboloid
\begin{equation}\label{eq:hyp}
\{v \in \R^{n,1} \mid (v, v) = -1\}
\end{equation}
as a vector model of the $n$-dimensional hyperbolic Lobachevsky space $\HH^n$.

Suppose that $\OOO(L)$ is the group of automorphisms of a lattice $L$. It is known
(cf. \cite{Ven37, BHC62, MT62})
that its subgroup
$\mathcal{O}'(L)$ leaving invariant each connected component of the hyperboloid~(\ref{eq:hyp}),
is a discrete group of motions of the Lobachevsky space with finite volume fundamental polytope. Moreover,
if $\F = \Q$ and the lattice $L$ is isotropic (that is, the quadratic form  associated with it represents zero), then
the quotient space $\HH^n/\Gamma$
is a finite volume non-compact orbifold,
and in all other cases it is compact. 

Recall that two subgroups $\Gamma_1$ and $\Gamma_2$ of some group $G$ are said to be \textit{commensurable} if, for some element $g \in G$, the group $\Gamma_1 \cap g \Gamma_2 g^{-1}$ is a finite index subgroup in each of them.

\begin{definition}
Discrete subgroups $\Gamma < \Isom(\HH^n)$ that are commensurable
with $\OOO'(L)$ are called arithmetic lattices of the simplest type. The field $\F$ is called the field of definition (or the ground field) of the
group $\Gamma$ (and of all subgroups commensurable with it).
\end{definition}

A primitive vector
$e$ of a Lorentzian lattice $L$ is called a \textit{root} or, more precisely,
a $k$-\textit{root},
where $k = (e, e) \in A_{>0}$ if
$2 (e, x) \in kA$ for all $x \in L.$
Every root $e$ defines an \emph{orthogonal reflection} (called
a \emph{$k$-reflection} if
$(e, e) = k$) in the space $L \otimes_{\id(A)} \mathbb{R}$
$$
\mathcal {R}_e: x \mapsto x - \frac{2 (e, x)} {(e, e)} e,
$$
which preserves the lattice $L$ and determines
the reflection of the space $\mathbb{H}^n$ with respect to the hyperplane
$H_e = \{x\in\mathbb{H}^n \mid (x,e) = 0\},$
called the \textit{mirror} of $\mathcal{R}_e$.

\begin{definition}\label{def:sub-2}
A reflection $\mathcal{R}_e$ is said to be a sub-$2$-reflection if
$(e, e) \mid 2$ in $A$.
\end{definition}

For example, for $\F = \Q$ and $A = \Z$, this holds for $(e, e) = 1$ and $(e, e) = 2$, i.e., only $1$- and $2$-reflections
are sub-$2$-reflections, while, for $\F = \Q(\!\sqrt{2})$ and $A = \Z[\!\sqrt{2}]$, all $1$-,
$2$- and $(2 + \sqrt{2})$-reflections  are
sub-$2$-reflections. Any primitive vector $e \in L$ for which $(e, e) \mid 2$ is automatically a root of the lattice $L$ and of any of its finite extensions.

Let $L$ be a Lorentzian lattice over a ring of integers $A$. We denote by $\OOO_r (L)$ the subgroup of the group $\OOO'(L)$ generated by all
reflections contained in it, and
we denote by $\mathcal{S}(L)$ the subgroup of $\mathcal{O}'(L)$ generated by all sub-$2$-reflections.

\begin{definition}\label{def:refl-lat}
A Lorentzian lattice $L$ is said to be reflective if the index $ [\OOO'(L): \OOO_r(L)]$ is finite, and sub-$2$-reflective if the index  $[\OOO' (L): \mathcal{S}(L)]$ is finite.
\end{definition}

Vinberg showed \cite[Prop. 3]{Vin72} that $\OOO'(L) = \OOO_r(L) \rtimes H,$ where $H = \Sym(P) \cap \OOO'(L)$ and $P$ is the fundamental Coxeter polytope of the arithmetic hyperbolic reflection group $\OOO_r(L)$. Thus, a lattice $L$ is reflective if and only if $P$ has a finite volume.

\begin{definition}
A Lorentzian $\Z$-lattice $L$
is called $2$-reflective if the subgroup
$\OOO^{(2)}_r(L)$ generated by all $2$-reflections
has a finite index in $\OOO'(L)$.
\end{definition}

Note that any $2$-reflective lattice is sub-$2$-reflective.
Obviously, a finite extension of any sub-$2$-reflective Lorentzian lattice is also a sub-$2$-reflective Lorentzian lattice.

\begin{remark}
Due to the mentioned property about finite extensions, sub-$2$-reflective lattices can be also called stably reflective.

In \cite{Bog16,Bog17,Bog19} sub-$2$-reflective lattices over $\Z$ are called $(1{,}2)$-reflective. 
\end{remark}

\subsection{State of the art}

As mentioned in the introduction, Vinberg
\cite{Vin67}
began in 1967  a systematic study of hyperbolic reflection groups. He
proved an arithmeticity criterion for finite covolume hyperbolic reflection groups and,
in particular, he showed that a
discrete hyperbolic reflection group of finite covolume is an arithmetic group with
ground field $\F$
if and only if it is commensurable with a
group of the form $\OOO'(L)$, where $L$ is
some (automatically reflective)
Lorentzian lattice over a totally real number
field $\F$.

In~1972, Vinberg proposed an algorithm
(see \cite{Vin72}, \cite{Vin73}) that, given a lattice $L$, enables
one to construct the fundamental Coxeter polytope of the group $\mathcal{O}_r (L)$ and determine thereby
the reflectivity of the lattice $L$.

The next important result belongs to several authors.

\begin{theorem}[see \cite{Vin84, Nik81a,  LMR06, Ag06, Nik07, ABSW08}]
For each $n \geqslant 2$, up to scaling, there are
only finitely many reflective Lorentzian
lattices of signature $(n,1)$. Similarly,  up to conjugacy, there
are only finitely many maximal arithmetic
reflection groups in the spaces $\HH^n$.
Arithmetic hyperbolic reflection groups and
compact Coxeter polytopes do not exist
in $\HH^{n\geqslant 30}$.
\end{theorem}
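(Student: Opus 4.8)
The plan is to derive all three assertions from one common engine: a \emph{uniform upper bound} on the covolume $\Vol(\HH^n/\OOO_r(L))$ of an arithmetic hyperbolic reflection group that is independent of the dimension. First I would extract the geometric input already at hand. By the simplicity of finite-volume acute-angled polytopes together with the bounded-ridge estimates — Nikulin's bound $\cosh\rho(F_1,F_2)\le 7$, or the sharper $\ta$ of Theorem~\ref{th1} — the local combinatorics of the fundamental Coxeter polytope $P$ of $\OOO_r(L)$ is strongly constrained: around a suitable facet only boundedly many others can appear, each meeting it at an angle bounded away from $\pi$. Feeding this into a Gauss--Bonnet / Euler-characteristic estimate for $P$ yields the dimension-free covolume bound; this is the substantive content of \cite{ABSW08}.

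Next I would combine this bound with the Borel--Prasad volume formula. For an arithmetic lattice of simplest type with ground field $\F$, the covolume is, up to controlled factors, a product of a power of the discriminant $d_\F$, the degree $[\F:\Q]$, and special values of the Dedekind zeta function $\zeta_\F$ and of Dirichlet $L$-functions. Since these special values are bounded below by absolute constants while $d_\F$ grows with $[\F:\Q]$ by the Minkowski and Odlyzko bounds, the uniform covolume bound forces $[\F:\Q]$ to be bounded and then confines $d_\F$ to a finite set; hence only finitely many ground fields $\F$ occur across all $n$. For each admissible pair $(\F,n)$, finiteness of lattices of covolume $\le V$ (Borel's finiteness theorem) gives finitely many commensurability classes, and as each reflective $L$ determines $\OOO_r(L)$ with index bounded inside a maximal group, I obtain finiteness of maximal arithmetic reflection groups and, up to scaling, of reflective Lorentzian lattices of each signature $(n,1)$.

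The remaining vanishing statement for $n\ge 30$ I would establish, following Vinberg, by a combinatorial argument on the Gram matrix rather than through volumes. Writing $G=(g_{ij})$ for the Gram matrix of the unit outer facet normals of a compact (or arithmetic finite-volume) Coxeter polytope, simplicity makes each vertex contribute a positive-definite principal $n\times n$ submatrix, while $G$ itself has signature $(n,1)$. One then balances a counting inequality among facets, edges, and vertices, using that every intersecting pair satisfies $g_{ij}\le -\tfrac12$ (angles $\le\pi/3$) and exploiting the presence of a single negative eigenvalue; the comparison with $n$ becomes impossible once $n\ge 30$, which both bounds the range of dimensions and closes off the all-dimensions finiteness claim.

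I expect the main obstacle to be the very first step — the dimension-independent covolume bound. It is precisely here that the elementary ridge estimates must be upgraded, through Gauss--Bonnet and a careful control of how the Coxeter diagram of $P$ can grow, into an analytic inequality strong enough to be matched against the Borel--Prasad formula; the field-finiteness (Borel--Prasad together with Odlyzko) and the high-dimensional vanishing (Vinberg) are then substantial but comparatively self-contained consequences.
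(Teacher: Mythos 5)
The paper itself offers no proof of this statement: it is a survey theorem assembled from the cited literature ($n=2$ by \cite{LMR06}, $n=3$ by \cite{Ag06}, general $n$ by \cite{ABSW08} and \cite{Nik81a,Nik07}, and the dimension bound by \cite{Vin84}), so your proposal has to be measured against those proofs. Against that standard it has a fatal gap at its very first step, the ``common engine.'' A uniform covolume bound for arithmetic hyperbolic reflection groups cannot be obtained from ridge estimates via Gauss--Bonnet, and in the generality you state it the bound is simply false: Allcock's construction \cite{All06} --- cited in the paper immediately after this theorem precisely to explain why the maximality hypothesis cannot be dropped --- gives infinitely many arithmetic reflection groups in a fixed dimension, and since by Borel's finiteness theorem only finitely many arithmetic lattices of covolume below any given bound exist up to conjugacy, these groups must have unbounded covolume. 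Any covolume bound can therefore only hold for \emph{maximal} reflection groups (or for the maximal lattices containing them), and establishing it there is exactly the hard content of the theorem, not a preliminary one can extract from Theorem~\ref{th1}. The proposed derivation also fails on its own terms: Gauss--Bonnet converts volume into Euler characteristic only in even dimensions (in odd dimensions, in particular $n=3$, it yields nothing), and even in even dimensions the ridge bounds --- Nikulin's $\le 7$ or the sharper $\ta$ --- are purely \emph{local}: they constrain the facets surrounding one edge and give no control on the total number of faces of $P$, hence none on $\chi$. Finally, the attribution is inaccurate: \cite{ABSW08}, like \cite{LMR06,Ag06}, obtains the covolume bound from the uniform spectral gap (property $(\tau)$) of congruence subgroups combined with a geometric-convergence argument, while Nikulin's independent proof \cite{Nik81a,Nik07} does use narrow parts of polytopes (ridges), but feeds them into the arithmetic of the small edge lattices (V-arithmetic groups and transition constants), not into Gauss--Bonnet.

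The later steps contain further errors, though less structural ones. The Borel--Prasad/Odlyzko mechanism for confining the degree and discriminant of the ground field, followed by Borel's finiteness theorem, is indeed how the known proofs conclude once a covolume bound for the maximal objects is in hand. But your sketch of Vinberg's theorem for $n\ge 30$ is not correct: it is false that every pair of intersecting facets of a Coxeter polytope satisfies $g_{ij}\le -\tfrac12$ (right angles give $g_{ij}=0$ and are ubiquitous), and Vinberg's argument is not an eigenvalue or facet--edge--vertex count on the Gram matrix. It rests on Nikulin's combinatorial lemma bounding the average number of vertices of two-dimensional faces of a simple convex polytope, played against the classification of elliptic Coxeter subdiagrams that govern the local structure of a compact (or arithmetic finite-volume) Coxeter polytope at its faces. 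As written, neither the engine of your argument nor its final step would go through.
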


Note that Allcock \cite{All06} constructed
infinitely many hyperbolic reflection groups
including arithmetic ones. Thus, the 
maximality assumption in the above theorem
cannot be dropped.

It was also proved that there are no reflective hyperbolic $\Z$-lattices of rank
$n+1 > 22$ and for $n = 20$ (Esselmann, 1996 \cite{Ess96}).

The above results give the hope that all reflective Lorentzian lattices, as well as
maximal arithmetic hyperbolic reflection groups,
can be classified.

Here we describe some progress in the
problem of classification of reflective
Lorentzian lattices. A more detailed history of the problem can be found in the recent survey of Belolipetsky \cite{Bel16}.

For the ground field $\Q$: reflective Lorentzian
lattices of signature $(n,1)$ are classified for $n = 2$ (Nikulin, 2000 \cite{Nik00}, and Allcock, 2011 \cite{Al12}),
$n = 4$ (Scharlau and Walhorn, 1989--1993 \cite{SW92, Wal93}), $n = 5$ (Turkalj, 2017 \cite{Tur17,Tur18})
and in the non-compact (isotropic) case for $n = 3$ (Scharlau and Walhorn, 1989--1993 \cite{Sch89, SW92}).

A classification of reflective Lorentzian
lattices of signature $(2,1)$ over
$\Z[\!\sqrt {2}]$ was obtained by Mark in
2015 \cite{Mark15b,Mark15phd}.

Unimodular reflective Lorentzian
lattices over $\Z$ were classified by Vinberg
and Kaplinskaja, (1972 and 1978, see
\cite{Vin72, Vin72a, VK78}). Other classifications of unimodular reflective Lorentzian lattices over
$\Z[\!\sqrt{2}]$, $\Z[(1+\sqrt{5})/2]$ and $\Z[\cos(2\pi/7)]$,
were obtained by Bugaenko (1984, 1990 and 1992, see \cite{Bug84, Bug90, Bug92}).

In 1979, 1981, and 1984 (see \cite{Nik79, Nik81a, Nik84}), Nikulin obtained a classification of
$2$-reflective Lorentzian $\Z$-lattices of signature
$(n,1)$ for $n \ne 3$, and Vinberg classified these
lattices for $n=3$ (1998 and 2007, \cite{Vin98, Vin07}).
Finally, the author obtained
(see \cite{Bog16,Bog17,Bog19})
a classification of sub-$2$-reflective anisotropic Lorentzian $\Z$-lattices of signature $(3,1)$.
They all turned out to be $2$-reflective in this case.

In all other cases, the classification problem still
remains open.

\subsection{Methods of testing a lattice for sub-$2$-reflectivity and non-reflectivity}

Recall that there is Vinberg's algorithm that constructs the fundamental Coxeter polytope of the group $\mathcal{O}_r (L)$. It can be applied to a group of type $\mathcal{S}(L)$. However, it will be more efficient to apply the procedure of Vinberg's algorithm to the large group  $\mathcal{O}_r (L)$ and  use some other approach to determine whether $L$
is sub-$2$-reflective or not.

\subsubsection{Method of ``bad'' reflections}
\label{ss4.2}

If we can construct the fundamental Coxeter polyhedron (or some part of it) of the group $\OOO_r (L)$
for some Lorentzian lattice $L$, then it is possible to determine whether it is sub-$2$-reflective.
One can consider the group $\Delta$ generated by the $k$-reflections that are not sub-$2$-reflections (we shall call them
``bad'' reflections)
in the sides of the fundamental polyhedron of the group $\OOO_r (L)$.
The following lemma holds (see~\cite{Vin07}).

\begin{lemma}
\label{lem:badrefl}
A lattice $L$ is sub-$2$-reflective if and only if it is reflective and the group
$\Delta$ is finite.
\end{lemma}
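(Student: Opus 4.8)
The plan is to translate both hypotheses into statements about the volumes of two fundamental polytopes and then to relate these volumes through the group $\Delta$. Recall from the preliminaries that $\OOO'(L)$ is a discrete group of isometries of $\HH^n$ with a fundamental domain of finite volume. Write $W:=\OOO_r(L)$ with fundamental Coxeter polytope $P$, and let $Q$ be the fundamental Coxeter polytope of the reflection subgroup $\mathcal{S}(L)$. Since both $\OOO_r(L)$ and $\mathcal{S}(L)$ lie inside $\OOO'(L)$, which has finite covolume, the index $[\OOO'(L):\OOO_r(L)]$ (resp. $[\OOO'(L):\mathcal{S}(L)]$) is finite if and only if $\Vol(P)<\infty$ (resp. $\Vol(Q)<\infty$). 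Thus $L$ is reflective iff $\Vol(P)<\infty$, and $L$ is sub-$2$-reflective iff $\Vol(Q)<\infty$; note also that $0<\Vol(P)\le\Vol(Q)$ because $P\subseteq Q$.

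The heart of the argument is the identity
\[
Q=\bigsqcup_{\delta\in\Delta}\delta P,\qquad\text{so that}\qquad \Vol(Q)=|\Delta|\cdot\Vol(P).
\]
To establish it, I would first observe that whether a reflection $\mathcal{R}_e$ is a sub-$2$-reflection depends only on the norm $(e,e)$ of its root, and that this norm is preserved by every element of $\OOO'(L)$; hence being a ``bad'' reflection is invariant under conjugation, the set of mirrors of sub-$2$-reflections is $W$-invariant, and $\mathcal{S}(L)\triangleleft\OOO'(L)$. Consequently the mirrors of the reflections in $\mathcal{S}(L)$ are exactly the $W$-translates of the good walls of $P$, the polytope $Q$ is a chamber of the arrangement they form, and $P$ is the chamber of the full arrangement contained in it. Passing from $P$ to an adjacent copy of $P$ inside the same $\mathcal{S}(L)$-chamber means crossing a wall that is \emph{not} a mirror of $\mathcal{S}(L)$, that is, a bad wall; therefore the copies of $P$ filling $Q$ are precisely those reachable from $P$ by successive reflections in bad walls, namely the orbit $\{\delta P:\delta\in\Delta\}$. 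These tiles are pairwise distinct because $P$ is a strict fundamental domain for $W$, so $\mathrm{Stab}_W(P)=\{1\}$; this gives both the disjointness and the count $\Vol(Q)=|\Delta|\cdot\Vol(P)$.

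With the volume identity in hand the lemma follows immediately. If $L$ is reflective and $\Delta$ is finite, then $\Vol(Q)=|\Delta|\cdot\Vol(P)<\infty$, so $L$ is sub-$2$-reflective; conversely, if $L$ is sub-$2$-reflective then $\Vol(Q)<\infty$, whence $\Vol(P)\le\Vol(Q)<\infty$ shows that $L$ is reflective, and $\Vol(P)>0$ forces $|\Delta|=\Vol(Q)/\Vol(P)<\infty$. The step that requires the most care—and the one I would write out in full—is the identification $Q=\bigsqcup_{\delta\in\Delta}\delta P$: one must argue rigorously, within Vinberg's theory of reflection groups, that a chamber of the sub-arrangement of good mirrors is tiled simply transitively by the subgroup $\Delta$ generated by the complementary (bad) walls of $P$. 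It is useful here to record that no bad wall can meet a good wall at an angle $\pi/m$ with $m$ odd, since two walls meeting at such an angle yield conjugate reflections, hence roots of equal norm and equal good/bad status; this guarantees that reflecting $P$ across its bad walls never produces a mirror of $\mathcal{S}(L)$ and so keeps the construction inside a single $\mathcal{S}(L)$-chamber.
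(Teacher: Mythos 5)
Your proof is correct and follows essentially the same route as the source the paper relies on: the paper itself gives no argument for this lemma, deferring to Vinberg \cite{Vin07}, and the argument behind that citation is precisely your key identity $Q=\bigsqcup_{\delta\in\Delta}\delta P$ (equivalently, $\OOO_r(L)=\mathcal{S}(L)\rtimes\Delta$, whence $[\OOO'(L):\mathcal{S}(L)]=[\OOO'(L):\OOO_r(L)]\cdot|\Delta|$). Your volume formulation is interchangeable with this index formulation because $\OOO'(L)$ has finite covolume and $\Vol(P)>0$, and your identification of the chambers of the sub-$2$-mirror arrangement as $\Delta$-orbits of $P$ is the standard (and correctly executed) step.
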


Actually, to prove that a lattice is not sub-$2$-reflective,
it is sufficient to construct only some part of the fundamental polyhedron containing
an infinite subgroup generated by bad reflections.

\subsubsection{Method of infinite symmetry}
\label{ssec:infsym}

Recall that $$\OOO'(L) = \OOO_r(L) \rtimes H,$$ where $H = \Sym(P) \cap \OOO'(L)$. If $P$ is of
infinite volume and has infinitely many facets, then the group $H$ is infinite. To determine whether it
is infinite or not, one can use the following lemma proved by V.\,O.~Bugaenko in 1992 (see \cite[Lemma 3.1]{Bug92}).

\begin{lemma}
Suppose $H$ is a discrete subgroup of $\Isom(\HH^n)$. Then
$H$ is infinite if and only if there exists a subgroup of $H$ without fixed points in $\HH^n$.
\end{lemma}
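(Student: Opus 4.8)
The plan is to reduce the lemma to the following sharper statement, from which both implications follow immediately: \emph{a discrete subgroup $K \le \Isom(\HH^n)$ possesses a common fixed point in $\HH^n$ if and only if $K$ is finite.} Granting this equivalence, the lemma is purely formal. If $H$ is infinite, then by the equivalence $H$ itself has no common fixed point, so $K = H$ already serves as a subgroup of $H$ without fixed points in $\HH^n$. Conversely, if some subgroup $K \le H$ has no common fixed point in $\HH^n$, then by the equivalence $K$ cannot be finite, whence $H \supseteq K$ is infinite. Thus the whole content lies in the two halves of the equivalence.

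For the first half I would show that every \emph{finite} group $K$ of isometries of $\HH^n$ fixes an interior point, using the vector model. Pick any $v \in \HH^n \subset \R^{n,1}$ and form the orbit sum $w := \sum_{g \in K} g\cdot v$. The future light cone $\{x \in \R^{n,1} \mid (x,x) < 0,\ x_0 > 0\}$ is an open convex cone, and each $g\cdot v$ lies on $\HH^n$ inside it, so $w$ is again a future-directed timelike vector and in particular $(w,w) < 0$. For every $h \in K$ we have $h\cdot w = \sum_{g}(hg)\cdot v = w$, since $g \mapsto hg$ merely permutes the elements of $K$; hence $w$ is $K$-invariant, and the normalized point $w/\sqrt{-(w,w)} \in \HH^n$ is a common fixed point of $K$. (Equivalently, this is the circumcenter of the bounded orbit $K\cdot v$, which is unique by the convexity properties of $\HH^n$ as a $\mathrm{CAT}(0)$ space.)

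For the second half I would prove the converse: if a discrete subgroup $K \le \Isom(\HH^n)$ fixes some point $x \in \HH^n$, then $K$ is finite. Here one uses that the stabilizer $\Isom(\HH^n)_x$ is compact: in the vector model it consists of the linear isometries of $\R^{n,1}$ fixing $x$, each of which preserves the positive definite orthogonal complement $x^{\perp} \cong \R^n$ and acts on it orthogonally, so $\Isom(\HH^n)_x \cong \OO(n)$. A discrete subgroup of a Hausdorff topological group is closed; as a closed subset of the compact group $\Isom(\HH^n)_x$ it is therefore compact, and a compact discrete space is finite. Hence $K \le \Isom(\HH^n)_x$ is finite, which completes the equivalence and thereby the lemma.

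I do not expect a genuine obstacle: the statement is elementary once the two standard facts above are in place. The only step that warrants any care is the existence of a common fixed point for a finite isometry group, but this is immediate from the convexity of the future cone in the hyperboloid model; the reverse implication is a compactness argument, and the deduction of the lemma from the equivalence rests only on the trivial observation that a fixed-point-free group of hyperbolic isometries is automatically infinite.
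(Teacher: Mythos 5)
Your proof is correct, but it cannot be compared against an in-paper argument for a simple reason: the paper does not prove this lemma at all — it is quoted as Bugaenko's result (Lemma 3.1 of \cite{Bug92}) and used as a black box in the ``method of infinite symmetry.'' Your self-contained argument is the standard one and is sound in both halves of your key equivalence (a discrete subgroup of $\Isom(\HH^n)$ has a common fixed point if and only if it is finite): the orbit-averaging step works because the open future cone is convex, so $w=\sum_{g\in K}g\cdot v$ is again future-directed timelike, and because $\PO_{n,1}(\R)$ acts linearly on $\R^{n,1}$, so $w$ is $K$-invariant and its normalization lies in $\HH^n$; the converse works because the point stabilizer is isomorphic to the compact group $\OO(n)$, a discrete subgroup of a Hausdorff group is closed, and a compact discrete group is finite. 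Two small points are worth making explicit in a final write-up: ``without fixed points'' must be read as ``without a \emph{common} fixed point,'' which is indeed the sense in which the paper applies the lemma to symmetry groups of the Coxeter polytope; and in your deduction the direction ``no common fixed point $\Rightarrow$ infinite'' uses only the half of the equivalence that needs no discreteness (finite $\Rightarrow$ fixed point), so no hypothesis on the subgroup $K\le H$ is required there, while the other direction applies the discrete half to $H$ itself, which is given discrete.
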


How can we find the set of fixed points?

\begin{lemma}[Bugaenko, see Lemma 3.2 in \cite{Bug92}]
Let $\eta$ be an involutive transformation of a real vector space $V$. Then the set of its fixed
points $Fix(\eta)$ is generated by the vectors $e_j + \eta(e_j)$, where $\{e_j\}$ form a basis of $V$.
\end{lemma}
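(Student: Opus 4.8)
The plan is to exploit the fact that, over a field of characteristic zero such as $\R$, the \emph{symmetrization} (or averaging) operator
$$
\pi \colon V \to V, \qquad \pi(v) = \tfrac{1}{2}\bigl(v + \eta(v)\bigr),
$$
is a projection onto $Fix(\eta)$. Once this is noted, the generating statement follows at once by applying $\pi$ to the basis vectors, so I would organize the argument as two inclusions.

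First I would check the easy inclusion: each proposed generator $e_j + \eta(e_j)$ indeed lies in $Fix(\eta)$. Since $\eta$ is involutive, $\eta^2 = \mathrm{id}$, and hence
$$
\eta\bigl(e_j + \eta(e_j)\bigr) = \eta(e_j) + \eta^2(e_j) = \eta(e_j) + e_j = e_j + \eta(e_j),
$$
so the $\R$-span of the vectors $e_j + \eta(e_j)$ is contained in $Fix(\eta)$.

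For the reverse inclusion, take any $v \in Fix(\eta)$ and expand it in the given basis as $v = \sum_j c_j e_j$ with $c_j \in \R$. Because $\eta(v) = v$, we have $v = \tfrac{1}{2}\bigl(v + \eta(v)\bigr) = \pi(v)$; substituting the expansion and using linearity of $\eta$ gives
$$
v = \tfrac{1}{2}\sum_j c_j\bigl(e_j + \eta(e_j)\bigr),
$$
which exhibits $v$ as an $\R$-linear combination of the claimed generators. Combining the two inclusions yields that $Fix(\eta)$ equals the span of the vectors $e_j + \eta(e_j)$, as required.

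I do not expect any genuine obstacle here: the statement is elementary linear algebra. The only point worth flagging is that the argument divides by $2$, which is legitimate precisely because $V$ is a real (characteristic zero) vector space. Equivalently, one can phrase the proof through the eigenspace decomposition $V = V_{+1} \oplus V_{-1}$ of the involution $\eta$, where $V_{+1} = Fix(\eta)$, and observe that $\pi$ is exactly the projection onto $V_{+1}$ along $V_{-1}$; either formulation makes the factor $\tfrac{1}{2}$ and the surjectivity of $\pi$ onto $Fix(\eta)$ transparent.
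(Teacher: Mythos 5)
Your proof is correct and complete: the two inclusions are both verified, the averaging map $\pi(v)=\tfrac{1}{2}(v+\eta(v))$ is indeed a projection onto $Fix(\eta)$ when $\eta^2=\mathrm{id}$, and the division by $2$ is harmless over $\R$. Note that the paper itself gives no proof of this lemma --- it is quoted as a known result, citing Lemma 3.2 of Bugaenko \cite{Bug92} --- so there is no in-paper argument to compare against; your elementary projection argument is the standard one and supplies exactly what the citation leaves implicit.
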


Due to this lemma the proof of non-reflectivity of a lattice is the following. If we know some part
of a polyhedron $P$ for the group
$\OOO_r(L)$, then
we can find a few symmetries of its Coxeter --- Vinberg diagram.

If these symmetries preserve the lattice
$L$, then they generate the subgroup that preserves $P$. If this subgroup has no fixed
points, then $\OOO_r(L)$ is of infinite index
in $\OOO'(L)$.

\section{Classification of sub-$2$-reflective Lorentzian lattices of signature $(3,1)$}
\label{section:appl}

\subsection{Description of the method}
In this section we describe application of  Theorem~\ref{th1} or, more precisely, of Proposition~\ref{th:w} to classification
of sub-$2$-reflective Lorentzian lattices.

Let now $P$ be the fundamental Coxeter polytope of the group $\mathcal{S}(L)$ for an anisotropic Lorentzian lattice $L$
of signature $(3,1)$ over a ring of integers
$\OOO_F$ of any totally real number field $\F$. The lattice $L$ is sub-$2$-reflective if and only if the polytope
$P$ is compact (i.\,e., bounded) in~$\HH^3$.

Let $E$~be an edge (of the polytope $P$) corresponding to a small ridge of width not greater than $\ta$. By  Theorem~\ref{th1} we can ensure that $\ta < 5.75$, however, we shall use a more efficient way, an explicit
formula from Proposition~\ref{th:w}.

Indeed, for a fixed number field $\F$ only finitely many dihedral angles in Coxeter polytopes are possible. This leaves us only
finitely many combinatorial types of a small ridge, and for each such type one can
explicitly compute (see {\bf \texttt{SmaRBA}} \cite{SmaRBA}) the respective bound
$\ta$. We present some useful calculations in Table
\ref{tab:ground_fields} (possible values for $(u,u)$ can be computed using \cite[Thm 9.29]{NZM}). 

\begin{table}[]
    \centering
    \begin{tabular}{c|c|c|c|c}
        $\F$ & Possible values for $(u,u)$ & Possible angles & \# of different ridges & $\max \ta$ \\
        \hline
        $\Q$ & $1$, $2$ & $\frac{\pi}{2}$, $\frac{\pi}{3}$, $\frac{\pi}{4}$, $\frac{\pi}{6}$ & 44 &$4.98$ \\
        $\Q(\!\sqrt{2})$ & $1$, $2$, $2 + \sqrt{2}$ & $\frac{\pi}{2}$,  $\frac{\pi}{3}$, $\frac{\pi}{4}$, $\frac{\pi}{6}$, $\frac{\pi}{8}$ & 58 & $4.98$ \\
        $\Q(\!\sqrt{3})$ & $1$, $2$, $2 + \sqrt{3}$, $4 + 2\sqrt{3}$ & $\frac{\pi}{2}$,  $\frac{\pi}{3}$, $\frac{\pi}{4}$, $\frac{\pi}{6}$, $\frac{\pi}{12}$ & 58 & $4.98$ \\
        $\Q(\!\sqrt{5})$ & $1$, $2$ & $\frac{\pi}{2}$,  $\frac{\pi}{3}$, $\frac{\pi}{4}$, $\frac{\pi}{5}$, $\frac{\pi}{6}$, $\frac{\pi}{10}$ & 99 & $5.75$ \\
    \end{tabular}
    \vskip 0.5cm
    \caption{Some quantities for sub-$2$-reflective Lorentzian lattices over ground fields $\F = \Q, \Q(\!\sqrt{2}), \Q(\!\sqrt{3}), \Q(\!\sqrt{5})$, i.e. $\OOO_\F = \Z, \Z[\!\sqrt{2}], \Z[\!\sqrt{3}], \Z[\frac{1+\sqrt{5}}{2}]$.}
    \label{tab:ground_fields}
\end{table}

\begin{remark}
Due to a minor technical error, the bound
$\ta < 4.14$ in \cite[Theorem 1.1]{Bog19}
is incorrect (the correct one is $\ta < 4.98$). However, the result \cite[Theorem 1.2]{Bog19} still holds.
\end{remark}

Let $u_1$, $u_2$
be the roots of the lattice $L$ that are orthogonal to the facets containing
the edge $E$ and are the outer normals of these facets.
Similarly, let $u_3$, $u_4$ be the roots corresponding to the framing
facets. We denote these facets
by $F_1$, $F_2$, $F_3$, and $F_4$, respectively. If $(u_3, u_3) = k$, $(u_4, u_4) = l$, then
(by  Theorem~\ref{th1})
\begin{equation}
\label{eq2}
|(u_3, u_4)| \leqslant  5.75 \sqrt{kl}.
\end{equation}

Since we are solving the classification problem for sub-$2$-reflective lattices,
we consider those roots $u \in L$ that satisfy the condition
$(u,u) \mid 2$ in $\OOO_F$. Thus, $(u,u)$ always assumes finitely many values (see Table~\ref{tab:ground_fields}).

In this case we are given bounds
on all elements of the matrix $G(u_1, u_2, u_3, u_4)$,
because all the facets $F_i$ are pairwise intersecting, excepting,
possibly, the pair of faces $F_3$ and $F_4$. But if they do not intersect,
then the distance between
these faces is bounded by inequality~\eqref{eq2}.
Thus, all entries of the matrix $G(u_1, u_2, u_3, u_4)$ are integers and bounded, so
there are only finitely many possible matrices
$G(u_1, u_2, u_3, u_4)$.

The vectors $u_1, u_2, u_3, u_4$ generate some sublattice $L '$
of finite index in the lattice $L$. More precisely, the lattice $ L $ lies between
the lattices $L'$ and $(L')^*$, and
$$
[(L')^* : L']^2=|d(L')|.
$$
Hence it follows that $|d(L')|$ is divisible by $[L:L']^2$. Using this, in each case we shall
find  for a lattice
$L'$ all its possible extensions of finite index.

The resulting list of candidate lattices is verified for reflectivity using Vinberg’s algorithm. There exist a few software implementations of Vinberg's algorithms, these are {\bf \texttt{AlVin}} \cite{Guglielmetti2, Gug17}, for Lorentzian lattices with an orthogonal basis over several ground fields, and {\bf \texttt{VinAl}} (see \cite{VinAlg2017, BP18})
for Lorentzian lattices with an arbitrary basis over $\Z$.
Further work on
the project that implements Vinberg's algorithm for arbitrary lattices over the quadratic fields
$\Q(\!\sqrt{d})$ is being carried out jointly with A.\,Yu.~Perepechko.

We introduce some notation:

\begin{itemize}
\item[1)]~$[C]$ is a quadratic lattice whose inner product in some
basis is given by a symmetric matrix $C$;

\item[2)]~$d(L) := \det C$ is the discriminant of the lattice $L = [C]$;

\item[3)]~$L \oplus M$ is the orthogonal sum of the lattices $L$ and $M$.
\end{itemize}

The method described above, allows one to obtain the following fact.

\begin{theorem}[Bogachev, \cite{Bog19},
Th.~1.2]
Every sub-$2$-reflective anisotropic Lorentzian lattice of signature $(3,1)$ over $\mathbb{Z}$ is either isomorphic
to
$[-7] \oplus [1] \oplus [1] \oplus [1]$ or
$[-15] \oplus [1] \oplus [1] \oplus [1]$,
or to an even index $2$ sublattice of one of them.
\end{theorem}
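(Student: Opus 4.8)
The plan is to specialize the general procedure of Section~\ref{section:appl} to the ground field $\F = \Q$ with $A = \Z$. Since $L$ is anisotropic over $\Q$, the group $\OOO'(L)$ is cocompact, so by the discussion preceding the statement $L$ is sub-$2$-reflective precisely when the fundamental Coxeter polytope $P$ of $\mathcal{S}(L)$ is a compact polytope in $\HH^3$. Over $\Z$ a sub-$2$-reflection comes from a root $u$ with $(u,u) \in \{1,2\}$, and two mirrors of such reflections can only meet at a dihedral angle $\pi/2$, $\pi/3$, $\pi/4$, or $\pi/6$. Consequently a small ridge $\Sigma_E$ of $P$ has one of finitely many combinatorial types; as recorded in Table~\ref{tab:ground_fields}, there are $44$ of them with $\max \ta = 4.98$.

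Next I would fix an interior point $O \in P$ and the outermost edge $E$ from $O$, whose existence and ridge bound are supplied by Theorem~\ref{th1}. Write $u_1, u_2$ for the roots normal to the facets $F_1, F_2$ through $E$ and $u_3, u_4$ for the roots normal to the framing facets $F_3, F_4$, with $(u_3,u_3) = k$ and $(u_4,u_4) = l$ in $\{1,2\}$. By Proposition~\ref{th:w}, refined to the exact per-type bound computed by \texttt{SmaRBA}, the width obeys $\cosh\rho(F_3,F_4) \le \ta \le 4.98$, so that $|(u_3,u_4)| = \cosh\rho(F_3,F_4)\cdot\sqrt{kl} \le 4.98\sqrt{kl}$. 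Every other pair of facets intersects, so the corresponding Gram entries are bounded a priori by acuteness; only the possibly diverging pair $F_3, F_4$ needs the ridge inequality. As all inner products $(u_i,u_j)$ lie in $\Z$, this yields only finitely many admissible Gram matrices $G(u_1,u_2,u_3,u_4)$, which I would enumerate.

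Each admissible Gram matrix determines a rank-$4$ sublattice $L' = \langle u_1,u_2,u_3,u_4 \rangle$ of finite index in $L$, with $L' \subseteq L \subseteq (L')^*$ and $[(L')^*:L']^2 = |d(L')|$, so that $[L:L']^2 \mid |d(L')|$. For each $L'$ I would run through all intermediate lattices $L$ compatible with this divisibility, obtaining a finite list of candidate lattices up to isomorphism and finite extension. I would then apply Vinberg's algorithm (\texttt{VinAl}) to each candidate to build the fundamental polytope of $\OOO_r(L)$ and decide sub-$2$-reflectivity: a candidate is retained only when the polytope is compact and the group $\Delta$ generated by the ``bad'' (non-sub-$2$-) reflections is finite, by Lemma~\ref{lem:badrefl}. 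Working up to finite extension, the two maximal lattices surviving this sieve are $[-7]\oplus[1]\oplus[1]\oplus[1]$ and $[-15]\oplus[1]\oplus[1]\oplus[1]$; their even index-$2$ sublattices are thereby included in the list, each listed odd lattice being a finite extension of such a sublattice.

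The main obstacle is not the enumeration but the elimination of non-reflective candidates. When a candidate is reflective, Vinberg's algorithm terminates and the compact polytope is read off directly; but when it is not, the algorithm fails to terminate, so infinitude of the index $[\OOO'(L):\mathcal{S}(L)]$ must be certified by other means. For each near-miss one has to exhibit an infinite-order symmetry of the Coxeter --- Vinberg diagram that acts without fixed points, using Bugaenko's lemmas, and this case-by-case non-reflectivity argument is where the bulk of the effort is concentrated.
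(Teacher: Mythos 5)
Your proposal follows essentially the same route as the paper: it is exactly the method described in \textbf{\S}~\ref{section:appl} (and carried out in \cite{Bog19}) — the outermost-edge ridge bound from Theorem~\ref{th1}/Proposition~\ref{th:w} with the corrected value $\max\ta = 4.98$ for $\F = \Q$, finite enumeration of integral Gram matrices $G(u_1,u_2,u_3,u_4)$ with $(u_i,u_i)\mid 2$, passage to finite-index extensions via $[L:L']^2 \mid |d(L')|$, and then Vinberg's algorithm combined with Lemma~\ref{lem:badrefl} and the infinite-symmetry method for the non-reflective candidates. Your closing remark correctly locates the real work in certifying non-reflectivity of the near-misses, which is also where the paper (via \cite{Bog19} and \textbf{\S}~\ref{ssec:infsym}) concentrates its effort.
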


Using {\bf \texttt{AlVin}}, one can easy get the following result.

\begin{theorem}
Unimodular Lorentzian lattices over $\Q(\!\sqrt{13})$ and
$\Q(\!\sqrt{17})$ of signature $(n,1)$ are reflective if and only if $n \leqslant 4$ (see Table~\ref{tab:unimod} for details).
\end{theorem}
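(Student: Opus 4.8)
The plan is to separate the statement into three parts: a classification of the relevant lattices, the positive direction ($n\le 4$) via Vinberg's algorithm, and the negative direction ($n\ge 5$) via a descent that reduces everything to a single base case.

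First I would classify the unimodular Lorentzian lattices of signature $(n,1)$ over $\OOO_\F$ for $\F=\Q(\sqrt{13})$ and $\F=\Q(\sqrt{17})$. The crucial point is that the definiteness requirement at the conjugate embedding $\sigma$ rules out the naive form $\langle -1\rangle\oplus\langle 1\rangle^n$: its coefficients are rational, hence fixed by $\sigma$, so the conjugate form still has signature $(n,1)$ rather than being positive definite. This forces the timelike coefficient to be a unit $\epsilon$ with $\epsilon<0<\sigma(\epsilon)$, which exists precisely because the fundamental units of both fields have norm $-1$. Since both fields have class number $1$ (so $\OOO_\F$ is a PID and odd indefinite unimodular lattices diagonalize), I expect a short explicit list for each $n$, namely diagonal lattices $\langle -\epsilon\rangle\oplus\langle 1\rangle^n$ up to units modulo squares, together with finitely many even exceptions at special residues of $n$. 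Every subsequent step is then applied to each member of this list. I would also record at this stage that, since $\F\ne\Q$, each such lattice is automatically anisotropic over $\F$ (a form positive definite at one real place cannot represent $0$ over $\F$), so by the compactness criterion of \S\ref{sec:arith} the associated fundamental Coxeter polytope is compact.

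For the positive direction I would, for each $n\in\{1,2,3,4\}$ and each lattice $L$ in the classification, run Vinberg's algorithm; as these lattices are diagonal, the implementation \texttt{AlVin} applies directly. Termination of the algorithm, certified by Vinberg's finite-volume criterion on the accumulated facet normals, yields a compact Coxeter polytope with finitely many facets, exhibiting $\OOO_r(L)$ as a finite-covolume reflection subgroup; hence $[\OOO'(L):\OOO_r(L)]<\infty$ and $L$ is reflective. The resulting diagrams and facet counts populate Table~\ref{tab:unimod}. The negative direction, by contrast, must not rely on infinitely many computations, so I would establish the monotonicity principle: if $M\oplus\langle 1\rangle$ is reflective, then $M$ is reflective. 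Concretely, the unit root $e$ spanning the $\langle 1\rangle$ summand satisfies $\mathcal R_e\in\OOO'(L)$ (it preserves $L$ and each sheet), and choosing a generic basepoint on $H_e$ arranges $H_e$ to be a facet of the fundamental polytope $P$; since $P$ is compact, the face $P\cap H_e$ is a compact finite-volume polytope and is the fundamental polytope for the reflection group of $e^\perp\cap L=M$ on $H_e\cong\HH^{n-1}$, so $M$ is reflective. Writing $L_n=\langle -\epsilon\rangle\oplus\langle 1\rangle^n=L_{n-1}\oplus\langle 1\rangle$ and iterating the contrapositive, reflectivity of $L_n$ forces reflectivity of $L_5$; it therefore suffices to prove that the signature $(5,1)$ lattices are non-reflective.

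The hard part will be this base case. I would run Vinberg's algorithm far enough to construct part of the (non-terminating) polytope, then apply the method of infinite symmetry of \S\ref{ssec:infsym}: exhibit a symmetry of the partial Coxeter--Vinberg diagram that preserves $L$ and, via Bugaenko's lemmas, generates an infinite subgroup $H\le\Sym(P)\cap\OOO'(L)$ with no fixed point in $\HH^5$; since $\OOO'(L)=\OOO_r(L)\rtimes H$, infiniteness of $H$ is exactly the failure of reflectivity. The main obstacles are coaxing the algorithm to reveal enough of the polytope to locate a genuine lattice-preserving diagram symmetry without fixed points, and pinning down the finitely many exceptional (in particular even) unimodular lattices at the relevant $n$ so that the descent tower covers every member of the classification and not merely the diagonal one; the monotonicity lemma itself, while standard in the compact setting, still needs the small verification that the chosen $H_e$ is a facet rather than a lower-dimensional face.
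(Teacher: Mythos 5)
Your positive half of the plan ($n\le 4$ via \texttt{AlVin}) is exactly what the paper does --- its entire proof is the computation recorded in Table~\ref{tab:unimod} --- and your preliminary classification step (the timelike coefficient must be $-\epsilon$ for the norm-$(-1)$ fundamental unit $\epsilon$, which is what appears in the table) is a sensible rigorization of what the paper leaves implicit. The genuine gap is the ``monotonicity principle'' on which your entire negative direction rests: \emph{if $M\oplus\langle 1\rangle$ is reflective then $M$ is reflective}. Your justification --- that the facet $F=P\cap H_e$ is the fundamental polytope of the reflection group of $M=e^{\perp}\cap L$ acting on $H_e\cong\HH^{n-1}$ --- is false as stated. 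A wall of $P$ meeting $H_e$ at dihedral angle $\pi/k$ contributes a wall of $F$, and the reflection of $H_e$ across that wall comes from a root of $M$ precisely when $k$ is even: the mirrors through the corresponding codimension-$2$ plane are the $k$ mirrors of its dihedral group, and this pencil contains a mirror orthogonal to $H_e$ if and only if $\pi/2$ is a multiple of $\pi/k$. When $k$ is odd (angles $\pi/3$, $\pi/5$, \dots), no root of $M$ has its mirror through that wall, so the chamber of $\OOO_r(M)$ containing $F$ strictly contains $F$: it is the union of all tiles $\gamma(P)\cap H_e$ reachable from $F$ by crossing such ``odd'' walls, and the assertion to be proved is exactly that this union is finite. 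Nothing in your argument addresses this; it is the same phenomenon --- faces of Coxeter polytopes need not be Coxeter polytopes --- flagged in the remark after Corollary~\ref{corol:th1} and studied in \cite{BK20}. So the lemma is not ``standard in the compact setting''; it is unproved (and possibly false), and with it the descent from all $n\ge 5$ to the single case $n=5$ collapses.

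Note that the theorem remains finitary even without your lemma: these lattices are anisotropic (the ground field is not $\Q$), so reflectivity would force the fundamental Coxeter polytope of $\OOO_r(L)$ to be compact, which is impossible in $\HH^{n\ge 30}$ by Vinberg's theorem \cite{Vin84}; hence non-reflectivity needs to be established only for $5\le n\le 29$, for instance case by case by the infinite-symmetry method of \textbf{\S}~\ref{ssec:infsym}, which is in effect what the paper's purely computational proof amounts to. If you want to keep the descent --- which would indeed be a genuinely stronger and more elegant route than the paper's --- you must actually prove the finiteness of the odd-wall components above, or establish the transfer of reflectivity from $M\oplus\langle 1\rangle$ to $M$ by some other means; at present this is a missing idea, not a routine verification.
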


\begin{table}
\caption{Unimodular reflective Lorentzian lattices over $\Q(\!\sqrt{13})$ and $\Q(\!\sqrt{17})$.}
\label{tab:unimod}
\begin{tabular} {| r | c | c | c | c |}
\hline
$L\qquad\quad$ & $n$ & $\#$ facets \\
\hline
$[-\frac{3+\sqrt{13}}{2}] \oplus [1] \oplus \ldots \oplus [1]$ & $2$ &  $4$   \\
\hline
$[-\frac{3+\sqrt{13}}{2}] \oplus [1] \oplus \ldots \oplus [1]$& $3$ &  $9$ \\
\hline
$[-\frac{3+\sqrt{13}}{2}] \oplus [1] \oplus \ldots \oplus [1]$& $4$ &  $40$ \\
\hline
\end{tabular}\label{tab:listuni13}
\begin{tabular} {| r | c | c | c | c |}
\hline
$L\qquad\quad$ & $n$ & $\#$ facets  \\
\hline
$[-4-\sqrt{17}] \oplus [1] \oplus \ldots \oplus [1]$ & $2$ &  $4$   \\
\hline
$[-4-\sqrt{17}] \oplus [1] \oplus \ldots \oplus [1]$& $3$ &  $6$ \\
\hline
$[-4-\sqrt{17}] \oplus [1] \oplus \ldots \oplus [1]$& $4$ &  $20$ \\
\hline
\end{tabular}\label{tab:listuni17}    
\end{table}

The next step is to find a short list of
candidate-lattices for sub-$2$-reflectivity over $\Z[\!\sqrt{2}]$.

\subsection{Short list of candidate-lattices}

Our program {\bf \texttt{SmaRBA}} \cite{SmaRBA} creates a list of numbers $\ta$ (with respect to the ground field $\Q(\!\sqrt{2})$, see Table~\ref{tab:ground_fields}) and
then, using
this list, displays all Gram matrices $G(u_1, u_2, u_3, u_4)$.

This list consists of $83$ matrices, but many of them
are pairwise isomorphic.
After reducing this list, we obtain matrices  $G_1$--$G_{12}$, for each of which we find all corresponding
extensions.

To each Gram matrix $G_k$ in our notation, there corresponds a lattice $L_k$ that can have some
other extensions. For each new lattice
(non-isomorphic to any previously found lattice) we introduce the notation
$L(k)$, where $k$ denotes its number:

\vskip 0.3cm
\begin{small}
$
G_{1} =
\begin{pmatrix}
1 & 0 & 0 & 0\\
0 & 1& 0 & 0\\
0 & 0 & 1 & -1-\sqrt{2}\\
0 & 0 &  -1-\sqrt{2} & 1
\end{pmatrix},  \qquad L_1 \simeq [ -2(1+\sqrt{2})] \oplus [1] \oplus [1] \oplus [1];
$
\end{small}

Its unique extension is the ``index $\sqrt{2}$'' extension
$$L(1) := [ -(1+\sqrt{2})] \oplus [1] \oplus [1] \oplus [1].$$

\vskip 0.3cm

\begin{small}
$
G_{2} =
\begin{pmatrix}
1 & 0 & 0 & 0\\
0 & 1& 0 & 0\\
0 & 0 & 1 & -1-\sqrt{2}\\
0 & 0 &  -1-\sqrt{2} & 2
\end{pmatrix},  \qquad L_2 \simeq [ -(1+2\sqrt{2})] \oplus [1] \oplus [1] \oplus [1] := L(2);
$\end{small}
\vskip 0.3cm

\begin{small}
$
G_{3} =
\begin{pmatrix}
1 & 0 & 0 & 0\\
0 & 1& 0 & 0\\
0 & 0 & 2 & -1-\sqrt{2}\\
0 & 0 &  -1-\sqrt{2} & 2
\end{pmatrix},  \qquad L_3 \simeq  [1-2\sqrt{2}] \oplus [1] \oplus [1] \oplus [1] := L(3);
$\end{small}
\vskip 0.3cm
Notice that $L(3) \simeq [-5-4\sqrt{2}] \oplus [1] \oplus [1] \oplus [1]$, since $-(5+4\sqrt{2}) = (1-2\sqrt{2})(1+\sqrt{2})^2$.

\vskip 0.3cm


\begin{small}
$
G_{4} =
\begin{pmatrix}
1 & 0 & 0 & 0\\
0 & 1& -1 & -1\\
0 & -1 & 2 & -1-2\sqrt{2}\\
0 & -1 &  -1-2\sqrt{2} & 2
\end{pmatrix},  \ L_4 \simeq [-11-8\sqrt{2}] \oplus [1] \oplus [1] \oplus [1]  := L(4);
$\end{small}
\vskip 0.3 cm

\begin{small}
$
G_{5} =
\begin{pmatrix}
1 & 0 & 0 & 0\\
0 & 1& 0 & 0\\
0 & 0 & 2 & -2-2\sqrt{2}\\
0 & 0 &  -2-2\sqrt{2} & 2
\end{pmatrix} , \ L_5  = [G_5];
$\end{small}

\vskip 0.3cm

Its unique extension is the ``index $\sqrt{2}$'' extension
$$L(5) := [-\sqrt{2}] \oplus [1] \oplus [1] \oplus [1].$$

\vskip 0.3cm

\begin{small}
$
G_{6} =
\begin{pmatrix}
1 & 0 & 0 & 0\\
0 & 2& -1 & -1\\
0 & -1 & 2 & -\sqrt{2}\\
0 & -1 &  -\sqrt{2} & 2
\end{pmatrix},  \ L_6 = [G_6];
$\end{small}
\vskip 0.3cm

Its unique extension is the ``index $\sqrt{2}$'' extension
\begin{small}
$$
\begin{bmatrix}2 & -1 &-\sqrt{2}\\-1 & 2 &\sqrt{2}-1 \\ -\sqrt{2} & \sqrt{2}-1 & 2-\sqrt{2} \end{bmatrix}\oplus [1] \simeq L(5).
$$\end{small}
\vskip 0.3cm

\begin{small}
$
G_{7} =
\begin{pmatrix}
1 & 0 & 0 & 0\\
0 & 2& -1 & -1\\
0 & -1 & 2 & -\sqrt{2}-1\\
0 & -1 &  -\sqrt{2}-1 & 2
\end{pmatrix},  \ L_7 = [G_7];
$ \end{small}
\vskip 0.3cm

Its unique extension is the ``index $\sqrt{2}$'' extension
\begin{small}
$$
L(6) := \begin{bmatrix}2 & -1 &0 \\-1 & 2 &-1 \\ 0 & -1 & -\sqrt{2} \end{bmatrix}\oplus [1] .
$$\end{small}
\vskip 0.25cm

$
G_{8} =
\begin{pmatrix}
1 & 0 & 0 & 0\\
0 & 2& -1 & -1\\
0 & -1 & 2 & -\sqrt{2}-2\\
0 & -1 &  -\sqrt{2}-2 & 2
\end{pmatrix},  \ L_{8} = [G_{8}];
$

\vskip 0.25cm

Its unique extension is the ``index $\sqrt{2}$'' extension \begin{small}
$$L(7) := \begin{bmatrix}2 & -1-\sqrt{2}\\-1-\sqrt{2} & 2\end{bmatrix} \oplus [2+\sqrt{2}]
\oplus [1] \simeq [-5-4\sqrt{2}] \oplus [2+\sqrt{2}] \oplus [1] \oplus [1] .$$\end{small}

\vskip 0.25cm

\begin{small}
$
G_{9} =
\begin{pmatrix}
1 & 0 & 0 & -1\\
0 & 2& -1 & -1\\
0 & -1 & 2 & -\sqrt{2}-1\\
-1 & -1 &  -\sqrt{2}-1 & 2
\end{pmatrix},  \ L_{9} \simeq [-7-6\sqrt{2}] \oplus [1] \oplus [1] \oplus [1] := L(8);
$
\vskip 0.25cm

$
G_{10} =
\begin{pmatrix}
1 & 0 & -1 & -1\\
0 & 2& -1 & -1\\
-1 & -1 & 2 & -2\sqrt{2}-1\\
-1 & -1 &  -2\sqrt{2}-1 & 2
\end{pmatrix},  \ L_{10} = [G_{10}];
$\end{small}
\vskip 0.25cm

Its unique extension is the index $2$ extension
$$L(9) := [-7-5\sqrt{2}] \oplus [1] \oplus [1] \oplus [1].$$

\begin{small}
$
G_{11} =
\begin{pmatrix}
2 & 0 & 0 & -1\\
0 & 2& -1 & -1\\
0 & -1 & 2 & -\sqrt{2}\\
-1 & -1 &  -\sqrt{2} & 2
\end{pmatrix} , \ L_{11} = [G_{11}] := L(10);
$
\vskip 0.2cm
$
G_{12} =
\begin{pmatrix}
2 & 0 & 0 & -1\\
0 & 2& -1 & -1\\
0 & -1 & 2 & -\sqrt{2}-1\\
-1 & -1 &  -\sqrt{2}-1 & 2
\end{pmatrix},  \ L_{12} = [G_{12}] := L(11).
$
\end{small}

\section{Sub-$2$-reflectivity test and proof of  Theorem~\ref{th3}}\label{section:th3}

So far we have $11$ candidate lattices $L(1)$--$L(11)$. For each lattice
$L(k)$ we will use Vinberg's algorithm for constructing the fundamental Coxeter 
polytope for the group $\OOO_r(L(k))$. After that, it remains to apply Lemma~\ref{lem:badrefl}.

\begin{table}
\begin{footnotesize}
\begin{center}
\begin{tabular}{c|c|c|c}
$L(k)$ & $L$ & Roots &
$B(L)$
\\
\hline
$L(1)$ & $[-1 - \sqrt{2}] \oplus [1] \oplus [1] \oplus [1]$ &
\begin{minipage}{0.49\textwidth}
\centering
$$
\begin{aligned}
a_1 &= (0, -1, 1, 0) &
a_2 &= \left(0, 0, -1, 1\right)
\\
a_3 &= \left(0, 0, 0, -1\right) &
a_4 &= \left(1, 1+\sqrt{2}, 0, 0\right)
\end{aligned}
$$
$a_5 = \left(1+\sqrt{2}, 1+\sqrt{2}, 1+\sqrt{2}, 1+\sqrt{2}\right)$
\end{minipage}
& $\emptyset$ \\
\hline
$L(2)$ & $[-1 - 2\sqrt{2}] \oplus [1] \oplus [1] \oplus [1]$ &
\begin{minipage}{0.49\textwidth}
\centering
$$
\begin{aligned}
a_1 &= (0, -1, 1, 0) &
a_2 &= \left(0, 0, -1, 1\right)
\\
a_3 &= \left(0, 0, 0, -1\right) &
a_4 &= \left(1, 1+\sqrt{2}, 0, 0\right)
\end{aligned}
$$
$a_5 = \left(1+\sqrt{2}, 2+\sqrt{2}, 2+\sqrt{2}, 1\right)$
$a_6 = \left(1+\sqrt{2}, 2+\sqrt{2}, 1+\sqrt{2}, 1+\sqrt{2}\right)$
\end{minipage}
& $\emptyset$\\
\hline
$L(3)$ & $[-5 - 4\sqrt{2}] \oplus [1] \oplus [1] \oplus [1]$ &
\begin{minipage}{0.49\textwidth}
\centering
$$
\begin{aligned}
a_1 &= (0, -1, 1, 0) &
a_2 &= \left(0, 0, -1, 1\right)
\\
a_3 &= \left(0, 0, 0, -1\right) &
a_4 &= \left(1, 3+\sqrt{2}, 0, 0\right)
\end{aligned}
$$
$a_5 = \left(1, 1+\sqrt{2}, 1+\sqrt{2}, 1\right)$
\end{minipage}
& $a_4$\\
\hline
$L(4)$ & $[-11 - 8\sqrt{2}] \oplus [1] \oplus [1] \oplus [1]$ &
\begin{minipage}{0.49\textwidth}
\centering
$$
\begin{aligned}
a_1 &= (0, -1, 1, 0) &
a_2 &= \left(0, 0, -1, 1\right)
\\
a_3 &= \left(0, 0, 0, -1\right) &
a_4 &= \left(1, 2+\sqrt{2}, 2+\sqrt{2}, 1\right)
\end{aligned}
$$
$a_5 = \left(1, 2+2\cdot\sqrt{2}, 1, 0\right)$

$a_6 = \left(1, 2+\sqrt{2}, 1+\sqrt{2}, 1+\sqrt{2}\right)$

$a_7 = \left(2+\sqrt{2}, 7+5\sqrt{2}, 3+3\sqrt{2}, 2+\sqrt{2}\right)$

$a_8 = \left(1+2\sqrt{2}, 8+5\sqrt{2}, 4+3\sqrt{2}, 3+2\sqrt{2}\right)$

$a_9 = \left(1+2\sqrt{2}, 8+6\sqrt{2}, 3+2\sqrt{2}, 2+2\sqrt{2}\right)$

$a_{10} = \left(2+3\sqrt{2}, 13+9\sqrt{2}, 7+5\sqrt{2}, 2+\sqrt{2}\right)$

$a_{11} = \left(4+2\sqrt{2}, 13+10\sqrt{2}, 9+6\sqrt{2}, 0\right)$

$a_{12} = \left(4+4\sqrt{2}, 19+14\sqrt{2}, 9+6\sqrt{2}, 8+6\sqrt{2}\right)$
	
$a_{13} = \left(4+4\sqrt{2}, 20+14\sqrt{2}, 11+8\sqrt{2}, 1\right)$

$a_{14} = \left(4+2\sqrt{2}, 14+10\sqrt{2}, 6+4\sqrt{2}, 5+4\sqrt{2}\right)$

$a_{15} = \left(4+3\sqrt{2}, 17+12\sqrt{2}, 8+5\sqrt{2}, 6+4\sqrt{2}\right)$

$a_{16} = \left(4+3\sqrt{2}, 17+12\sqrt{2}, 9+7\sqrt{2}, 1+\sqrt{2}\right)$

$a_{17} = \left(5+4\sqrt{2}, 22+15\sqrt{2}, 13+9\sqrt{2}, 1+\sqrt{2}\right)$

\end{minipage}
& $\emptyset$\\
\hline
$L(5)$ & $[-\sqrt{2}] \oplus [1] \oplus [1] \oplus [1]$ &
\begin{minipage}{0.49\textwidth}
\centering
$$
\begin{aligned}
a_1 &= (0, -1, 1, 0) &
a_2 &= \left(0, 0, -1, 1\right)
\\
a_3 &= \left(0, 0, 0, -1\right) &
a_4 &= \left(1+\sqrt{2}, 1+\sqrt{2}, 1+\sqrt{2}, 0\right)
\end{aligned}
$$
$a_5 = \left(1+\sqrt{2}, 2+\sqrt{2}, 0, 0\right)$

$a_6 = \left(2+\sqrt{2}, 1+\sqrt{2}, 1+\sqrt{2}, 1+\sqrt{2}\right)$
\end{minipage}
& $\emptyset$\\
\hline
$L(9)$ & $[-7 - 5\sqrt{2}] \oplus [1] \oplus [1] \oplus [1]$ &
\begin{minipage}{0.49\textwidth}
\centering
$$
\begin{aligned}
a_1 &= (0, -1, 1, 0) &
a_2 &= \left(0, 0, -1, 1\right)
\\
a_3 &= \left(0, 0, 0, -1\right) &
a_4 &= \left(2-\sqrt{2}, 1+\sqrt{2}, 1, 0\right)
\end{aligned}
$$
$a_5 =  \left(1, 1+\sqrt{2}, 1+\sqrt{2}, 1+\sqrt{2}\right)$
\end{minipage}
& $\emptyset$\\
\hline
$L(10)$ & $\begin{bmatrix}
2 & 0 & 0 & -1\\
0 & 2& -1 & -1\\
0 & -1 & 2 & -\sqrt{2}\\
-1 & -1 &  -\sqrt{2} & 2
\end{bmatrix}$ & 
\begin{minipage}{0.49\textwidth}
\centering
$$
\begin{aligned}
a_1 &= (-1, 0, 0, 0) &
a_2 &= \left(0, -1, 0,0\right)
\\
a_3 &= \left(0, 0, -1, 0\right) &
a_4 &= \left(1, 1+\sqrt{2}, 1+\sqrt{2}, 1\right)
\end{aligned}
$$
$a_5 =  \left(2+\sqrt{2}, 2+\sqrt{2}, 2+\sqrt{2}, 1+\sqrt{2}\right)$
\end{minipage} & $\emptyset$  \\
\hline
\end{tabular}
\end{center}
\end{footnotesize}
    \caption{Sub-$2$-reflective Lorentzian
    lattices of signature $(3,1)$ over $\Z[\!\sqrt{2}]$. Here $B(L)$ denotes the set of ``bad'' reflections.}
    \label{tab:stably-details}
\end{table}


For candidate lattices with an orthogonal basis we use the software implementation
{\bf \texttt{AlVin}} \cite{Guglielmetti2} of Vinberg's algorithm.
This program is written for Lorentzian lattices associated with diagonal quadratic forms with square-free coefficients.

For lattices with non-orthogonal basis
we use the program {\bf \texttt{VinAlNF}}
\cite{VinAlNF2021} written by R. Bottinelli in 2020. The project {\bf \texttt{VinAlNF}} is actually just the modified program {\bf \texttt{VinAl}} \cite{VinAlg2017}, rewritten in \texttt{Julia} for arbitrary lattices over number fields.

As the result, we obtain seven sub-$2$-reflective Lorentzian lattices of signature
$(3,1)$ over $\Z[\!\sqrt{2}]$, which are represented in Table~\ref{tab:stably-details}.

The Gram matrices and Coxeter --- Vinberg diagrams corresponding to all the
lattices $L(1)$--$L(12)$ can be obtained by
{\bf \texttt{SmaRBA}} \cite{SmaRBA}, but for sub-$2$-reflective ones their Coxeter --- Vinberg diagrams are depicted in Fig.~\ref{fig:thC}.

We shall prove that all remaining lattices
are not sub-$2$-reflective (some of them are reflective, but not sub-$2$-reflective).

\begin{proposition}
The lattice
$L(6) = \begin{footnotesize}
\begin{bmatrix}2 & -1 &0 \\-1 & 2 &-1 \\ 0 & -1 & -\sqrt{2} \end{bmatrix}
\end{footnotesize}\oplus [1]$
is not sub-$2$-reflective.
\end{proposition}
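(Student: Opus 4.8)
The plan is to follow the two-method strategy of \S\ref{ss4.2} and \S\ref{ssec:infsym}, producing first an explicit (partial) picture of the fundamental polytope of $\OOO_r(L(8))$. Since the given basis is non-orthogonal, I would begin, exactly as was done above for $L(3)$ and $L(4)$, by replacing $L(8)$ with an isometric sublattice $L' \subset \Q^4(\!\sqrt{2})$ carrying a diagonal quadratic form together with the appropriate congruence conditions on the coordinates, so that \texttt{VinAl} \cite{VinAlg2017} can run Vinberg's algorithm on it. Running the algorithm from a suitable interior point yields a list of roots $a_1, a_2, \dots$ and the associated Coxeter --- Vinberg diagram. I would then sort these roots into sub-$2$-reflections --- those with $(u,u) \mid 2$, i.e. $(u,u) \in \{1, 2, 2+\sqrt{2}\}$ over $\Z[\!\sqrt{2}]$ --- and the remaining ``bad'' roots.

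At this point there are two possible outcomes, and the wording of the statement (``not sub-$2$-reflective'', rather than the ``reflective, but not sub-$2$-reflective'' of the preceding propositions) points to the second one. If the algorithm terminates with a compact polytope, then $L(8)$ is reflective, and it suffices to exhibit two bad roots $a_i, a_j$ whose mirrors diverge, i.e. $|(a_i, a_j)| > \sqrt{(a_i, a_i)(a_j, a_j)}$; the dihedral group generated by $\mathcal{R}_{a_i}$ and $\mathcal{R}_{a_j}$ is then infinite, so $\Delta$ is infinite and Lemma~\ref{lem:badrefl} yields the claim. If instead no finite-volume polytope appears, I would invoke the infinite-symmetry method of \S\ref{ssec:infsym}: locate a symmetry $\eta$ of the computed diagram, check that $\eta$ preserves $L(8)$, and use Bugaenko's lemma to compute $\mathrm{Fix}(\eta)$ from the vectors $e_j + \eta(e_j)$. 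If the group generated by such symmetries has no fixed point in $\HH^3$, then $H = \Sym(P) \cap \OOO'(L(8))$ is infinite, so $P$ has infinite volume, $\OOO_r(L(8))$ has infinite index in $\OOO'(L(8))$, and $L(8)$ is not reflective --- a fortiori not sub-$2$-reflective.

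I expect the second route to be the operative one, so the main obstacle will be the symmetry step: one must actually display a nontrivial lattice-preserving symmetry of the partial diagram and then verify, through the explicit $\sqrt{2}$-arithmetic of the root coordinates, that the subgroup it generates fixes no proper point of $\HH^3$ --- equivalently, that the common fixed subspace spanned by the vectors $e_j + \eta(e_j)$ contains no vector $v$ with $(v, v) < 0$. Should the polytope instead turn out to be compact, the analogous difficulty is milder: one only has to certify that a single pair of bad mirrors is genuinely divergent, which reduces to one inequality on an entry of the Gram matrix of the roots.
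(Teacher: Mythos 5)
Your proposal assembles the right ingredients (diagonalize, run \texttt{VinAl}, bad reflections, infinite symmetry), but the dichotomy you build on them contains a genuine flaw, and it leads you to pick the wrong branch. You tie the divergent-bad-mirrors argument to the case where Vinberg's algorithm \emph{terminates with a compact polytope}, and you reserve the infinite-symmetry method for the case where it does not. But the bad-reflection criterion does not require termination or compactness at all: as the paper notes immediately after Lemma~\ref{lem:badrefl}, to prove that a lattice is not sub-$2$-reflective it suffices to construct \emph{some part} of the fundamental polyhedron containing an infinite subgroup generated by bad reflections. This is exactly what the paper does for $L(8)$: it diagonalizes (via $f(y) = -\sqrt{2}\,y_0^2 + (3+\sqrt{2})y_1^2 + y_2^2 + y_3^2$ with congruence conditions), lets \texttt{VinAl} produce only the \emph{first nine} roots, observes that $a_3$, $a_6$, $a_9$ are bad roots (their norms $6+2\sqrt{2}$, $4+2\sqrt{2}$, $3+\sqrt{2}$ do not divide $2$ in $\Z[\!\sqrt{2}]$), and that the mirrors $H_{a_6}$ and $H_{a_9}$ diverge, so the group generated by bad reflections is already infinite. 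No claim about reflectivity of $L(8)$ is made or needed.

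Your inference from the wording of the statement --- that ``not sub-$2$-reflective'' (rather than ``reflective, but not sub-$2$-reflective'') signals that the infinite-symmetry route is operative --- is therefore mistaken: the weaker phrasing reflects precisely that the reflectivity of $L(8)$ is left undetermined, because the partial polyhedron already settles sub-$2$-reflectivity. As written, your plan would get stuck in practice: if the algorithm does not visibly terminate you would abandon the divergent-mirror check (which in fact succeeds after nine roots) and instead try to prove non-reflectivity by exhibiting a fixed-point-free symmetry group --- a strictly harder task that may be impossible if $L(8)$ happens to be reflective, and in any case is not needed. The fix is to drop the compactness precondition from your first branch: sort the roots into good and bad as they are produced, and stop as soon as two bad mirrors diverge.
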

\begin{proof}
The program {\bf \texttt{VinAlNF}}
finds $9$ first roots

\begin{small}
\begin{gather*}
\begin{aligned}
a_1 & = \left(-1,\,0,\,0,\,0\right),&
a_2 & = \left(0,\,-1,\,0,\,0\right), & a_3 & = \left(0,\,0,\,0,\,-1\right),\\
a_4 & = \left(\sqrt{2}, 1, 2+\sqrt{2},0\right), &
a_5 & = \left(1,1,1,\sqrt{2}\right),&
a_6 & = \left(1,2,1,0\right),\\
a_7 & = \left(\sqrt{2},1+\sqrt{2},\sqrt{2},1\right),&
a_8 & = \left(2 + 2\sqrt{2}, 3+3\sqrt{2}, 2+2\sqrt{2}, 2+2\sqrt{2}\right), & a_9 &= \left(1,2,3, 3+2\sqrt{2}\right).
\end{aligned}
\end{gather*}
\end{small}

Let us consider the subgroup generated by
``bad'' reflections with respect to the mirrors $H_{a_4}$ and $H_{a_9}$.
Since the mirrors  $H_{a_4}$ and $H_{a_9}$ are divergent, this subgroup is infinite.
\end{proof}

\begin{proposition}
The lattice $L(7) =
[-5-4\sqrt{2}] \oplus [1] \oplus [1]  \oplus [2+\sqrt{2}]$
is reflective, but not sub-$2$-reflective.
\end{proposition}
\begin{proof}
The program {\bf \texttt{AlVin}}
finds $20$ roots, but we need only first $7$ of them:
\begin{small}
\begin{gather*}
\begin{aligned}
a_1 & = \left(0, -1, 1, 0\right),&
a_2 & = \left(0, 0, -1, 0\right), & a_3 & = \left(0, 0, 0, -1\right),\\
a_4 & = \left(1, 3+\sqrt{2}, 0,0\right), &
a_5 & = \left(1,1+\sqrt{2},0,1+\sqrt{2}\right),&
a_6 & = \left(1,1+\sqrt{2},1+\sqrt{2},0\right),\\
\end{aligned}
a_7 = \left(1+\sqrt{2},0,0,3+\sqrt{2}\right).
\end{gather*}
\end{small}

It is sufficient to consider the group generated by
``bad'' reflections with respect to the mirrors $H_{a_4}$ and $H_{a_7}$.
Since these mirrors are divergent, this subgroup is infinite.
\end{proof}

\begin{proposition}
The lattice $L(8) = [ -7-6\sqrt{2}] \oplus [1] \oplus [1] \oplus [1]$ is reflective, but not
sub-$2$-reflective.
\end{proposition}
\begin{proof}
For the lattice $L(8)$ the program
{\bf \texttt{AlVin}} \cite{Guglielmetti2}
found 10 roots:
\begin{small}
\begin{gather*}
\begin{aligned}
a_1 &= \left(0,\,-1,\,1,\,0\right),
& a_2 &= \left(0,\,0,\,-1,\,1\right),\\
a_3 &= \left(0,\,0,\,0,\,-1\right),
& a_4 &= \left(1,\,\sqrt{2}+ 1,\,\sqrt{2}+ 1,\,\sqrt{2}+ 1\right),\\
a_5 &= \left(1,\,\sqrt{2}+ 2,\,\sqrt{2}+ 1,\,0\right), &
a_6 &= \left(2 \sqrt{2}+ 1,\,6 \sqrt{2}+ 7,\,0,\,0\right),\\
a_7 &=  \left(\sqrt{2}+ 1,\,3 \sqrt{2}+ 5,\,\sqrt{2}+ 1,\,1\right), &
a_8 &= \left(\sqrt{2}+ 1,\,3 \sqrt{2}+ 4,\,\sqrt{2}+ 2,\,\sqrt{2}+ 2\right),
\\
a_9 & = \left(4 \sqrt{2}+ 6,\,13 \sqrt{2}+ 19,\,7 \sqrt{2}+ 12,\,6 \sqrt{2}+ 7\right),
&
a_{10} & =  \left(2 \sqrt{2}+ 2,\,6 \sqrt{2}+ 9,\,2 \sqrt{2}+ 3,\,2 \sqrt{2}+ 2\right).
\end{aligned}
\end{gather*}
\end{small}
The Gram matrix of this set of roots
 corresponds to a compact $3$-dimensional Coxeter polytope.
The main diagonal of this matrix equals
$\{2, 2, 1, 2, 2, 2 \sqrt{2}+ 10, 2, 1, 2 \sqrt{2}+ 10\}.$
It remains to see that the group generated
by ``bad'' reflections with respect to the mirrors
$H_{a_6}$ and $H_{a_{10}}$ is infinite,
since the respective vertices of the Coxeter --- Vinberg diagram are connected by the dotted edge.
Hence the lattice $L(8)$ is
reflective, but not sub-$2$-reflective.
\end{proof}

\begin{proposition}
The lattice $L(11)$ is not reflective.
\end{proposition}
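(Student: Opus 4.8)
The plan is to apply the method of infinite symmetry from \S~\ref{ssec:infsym}. By Vinberg's decomposition $\OOO'(L) = \OOO_r(L) \rtimes H$ with $H = \Sym(P) \cap \OOO'(L)$, one has $[\OOO'(L):\OOO_r(L)] = |H|$, so to prove that a lattice is not reflective it suffices to exhibit an infinite subgroup of $H$: a group of symmetries of the fundamental polytope $P$ of $\OOO_r(L)$ that preserves $L$ and has no common fixed point in $\HH^3$.

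First, for each of $L(12) = [G_{13}]$, $L(13) = [G_{14}]$, and $L(14) = [G_{15}]$, I would run Vinberg's algorithm (via \texttt{VinAl}, or, after passing to a diagonal model of the lattice, via \texttt{AlVin}) to generate a finite initial batch of roots together with the corresponding part of the Coxeter --- Vinberg diagram. The goal at this stage is to compute enough of the diagram that a nontrivial diagram automorphism $\eta$ becomes visible --- for instance an involution permuting certain mirrors --- which fixes the cone spanned by the first roots and therefore is a candidate to extend to a symmetry of all of $P$.

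Next I would verify that each such diagram symmetry $\eta$ is realized by an element of $\OOO'(L)$, i.e., that the induced linear map of $\R^{3,1}$ preserves both the inner product and the lattice $L$. Once $\eta \in H$, Bugaenko's lemmas from \S~\ref{ssec:infsym} apply: the fixed-point set $\mathrm{Fix}(\eta)$ is spanned by the vectors $e_j + \eta(e_j)$ for a basis $\{e_j\}$ of $\R^{3,1}$, so it can be written down explicitly. To conclude that the group generated by the chosen symmetries has no common fixed point in $\HH^3$, it suffices to check that the intersection of their fixed-point sets contains no vector of negative norm, since a fixed point of $\HH^3$ would correspond precisely to such a timelike fixed vector. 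The first Bugaenko lemma then forces $H$ to be infinite, whence $L(12)$, $L(13)$, and $L(14)$ are all non-reflective.

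The main obstacle is the middle step: producing a portion of $P$ large enough for a genuine symmetry to reveal itself, and then arguing rigorously --- rather than merely observing it in the truncated diagram --- that the candidate $\eta$ both preserves $L$ and fixes no point of $\HH^3$. Since Vinberg's algorithm yields only a partial diagram, one must ensure that the observed symmetry is not an artifact of truncation; the cleanest route is to write $\eta$ down directly as an integral isometry of the Gram matrix and then verify by a finite computation that the form restricted to $\mathrm{Fix}(\eta)$ (or to the common fixed subspace of the chosen symmetries) is positive semidefinite.
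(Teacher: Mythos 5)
Your proposal is correct and follows exactly the paper's approach: the paper's own proof of this proposition consists precisely of invoking the method of infinite symmetry from \textbf{\S}~\ref{ssec:infsym} (Vinberg's decomposition $\OOO'(L) = \OOO_r(L) \rtimes H$ plus Bugaenko's two lemmas), carried out by a computer implementation of that method. Your write-up simply spells out the details of that same method --- finding diagram symmetries that preserve $L$, computing their fixed subspaces via $e_j + \eta(e_j)$, and checking that the common fixed subspace contains no timelike vector --- which is exactly what the cited script does.
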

\begin{proof}
The non-reflectivity of this lattice is determined by the method of infinite order symmetry
described in \S \ref{ssec:infsym}. This method was implemented in {\bf \texttt{VinAlNF}} \cite{VinAlNF2021}.
\end{proof}

\vskip 0.5cm

Thus, the lattices $L(1)$--$L(5)$, $L(9)$, $L(10)$ are sub-$2$-reflective. 
This completes the proof of  Theorem~\ref{th3}. \qed

\vskip 0.2cm

The next step in this direction can be finding all
sub-$2$-reflective Lorentzian
$\Z[\!\sqrt{2}]$-lattices of signature $(3,1)$ or
over other rings of integers. Moreover, the author hopes that the geometric method of small ridges can be generalized to higher dimensions.

\end{document}